\theoremstyle{plain}
\newtheorem*{theoremB'}{Theorem B'}
\newtheorem*{theoremB''}{Theorem B''}
\newtheorem{theorem}{Theorem}[section]
\newtheorem{lemma}[theorem]{Lemma}
\newtheorem{proposition}[theorem]{Proposition}
\newtheorem{definition}[theorem]{Definition}
\newtheorem{example}[theorem]{Example}
\theoremstyle{remark}
\numberwithin{equation}{section}
\newcommand{\curr}[1]{[\![{#1}]\!]}
\title{Coefficient groups inducing nonbranched optimal transport}
\author{Mircea Petrache}
\address{Max-Planck Institute for Mathematics, Vivatsgasse 7, 53111 Bonn, Germany}
\email{decostruttivismo@gmail.com}
\author{Roger Z\"{u}st}
\address{University of Bern, Mathematical Institute, Alpeneggstrasse 22, 3012 Bern, Switzerland}
\email{roger.zuest@math.unibe.ch}
\keywords{optimal transport, Abelian group, rectifiable chain, minimal filling, calibration}
\subjclass[2010]{49Q15,	49Q20, 49Q05, 28A75}
\begin{document}

\begin{abstract}
In this work we consider an optimal transport problem with coefficients in a normed Abelian group $G$, and extract a purely intrinsic condition on $G$ that guarantees that the optimal transport (or the corresponding minimum filling) is not branching. The condition turns out to be equivalent to the nonbranching of minimum fillings in geodesic metric spaces. We completely characterize finitely generated normed groups and finite-dimensional normed vector spaces of coefficients that induce nonbranching optimal transport plans. We also provide a complete classification of normed groups for which the optimal transport plans, besides being nonbranching, have acyclic support. This seems to initiate a new geometric classifications of certain normed groups. In the nonbranching case we also provide a global version of calibration, i.e.\ a generalization of Monge-Kantorovich duality.
\end{abstract}

\maketitle

\section{Introduction}

\subsection{Basic setting and motivation}
The present work can be considered as an attempt to do an \emph{ab initio} study of transportation problems, interpreted in a very broad sense. We consider $n$ points $x_1,\ldots, x_n$ in a space $X$, and associated coefficients $g_1,\ldots,g_n$ in a space $G$.

\medskip 

\noindent These points and coefficients may be interpreted as locations and quantifications of some entities. Then, informally speaking, we want to study the properties of the ``lowest cost $1$-dimensional transport'' for the quantities $g_i$ between the sources $x_i$, under minimal assumptions on $X$ and $G$. For more geometric motivations to the same problems see also the introductions of \cite{fleming}, \cite{white}, \cite{whitedef}, \cite{marchesemasaccesi}.

\medskip

\noindent It is a natural assumption to require $X$ to be a \textbf{geodesic metric space}. As we want to be able to implement a ``lossless transport'' condition for the quantity modeled by $G$ we have to be able to combine together different quantities $g_i$, and thus the space $G$ has to be a group. Because at a crossing of our transport system the order in which we sum contributions from the different branches is irrelevant, we require $G$ to be an Abelian group. Moreover to compare different coefficients we consider a norm $|\cdot|_G:G\to\mathbb R_{\ge 0}$ compatible with the group operation, i.e. we consider a \textbf{normed Abelian group} $(G,|\cdot|_G)$. Following \cite{white}, the axioms for $|\cdot|_G$ are
\begin{enumerate}
	\item $|g + h|_G \leq |g|_G + |h|_G$ for all $g,h \in G$,
	\item $|g|_G = |-g|_G$ for all $g \in G$,
	\item $|g|_G = 0$ if and only if $g = 0_G$.
\end{enumerate}
We often write $|\cdot|$ and $0$ rather than $|\cdot|_G$ and $0_G$, in case the group and norm are clear from the context.

\medskip

\noindent The optimal transport problem which we consider is the following: Given points $x_1,\ldots,x_n\in X$ and coefficients $g_1,\ldots,g_n\in G$, we split each $g_i$, $i=1,\ldots,n$, into parts $g_{ij}$, $j=1,\ldots,n$, and then interpret $g_{ij}$ to be the quantity moving from $x_i$ to $x_j$ ($g_{ij}$ is then assumed to be equal to $-g_{ji}$). Among all such decompositions we seek the one minimizing the transport cost $\sum_{i<j}|g_{ij}|d_X(x_i,x_j)$.

\begin{definition}[Optimal transport plans]
\label{optitransp}
Let $g_1,\dots,g_n$ be elements in a normed Abelian group $G$ such that $g_1+\dots+g_n = 0$ and assume that an $n$-point metric space is given by $(\{x_1,\dots,x_n\},d_X)$. Then we set
\begin{equation}
\label{optplans}
OT\biggl(\sum_{i=1}^ng_i\curr{x_i}\biggr) \mathrel{\mathop:}= \inf \sum_{1 \leq i < j \leq n}|g_{ij}|d_X(x_i,x_j) \ ,
\end{equation}
where the infimum is taken over all $g_{ij}$, $i,j = 1,\dots,n$, with $g_{ij}=-g_{ji}$, $g_{ii} = 0$ and $g_i=\sum_{j=1}^ng_{ij}$. We say that $G$ has \textbf{optimal transport plans} if for any choice of $g_i$ and $x_i$ as above, the infimum in \eqref{optplans} is achieved.
\end{definition}
\noindent Note that the condition $g_{ii}=0$ is implied by $g_{ij}=-g_{ji}$, and thus becomes redundant, if $G$ has no elements $g$ of torsion $2$, i.e.\ such that $g+g=0$.

\medskip

\noindent It is clear that in case $G$ is proper, i.e.\ the closed balls $\mathbf B(0,r)$ are compact for all $r > 0$, then $G$ has optimal transport plans. As such, this is a rather weak condition on the group.
\begin{example}
For the case $G=\mathbb R$ with the Archimedean norm, we obtain the usual notion of optimal transport: due to the condition $g_1+\cdots+g_n=0$, up to reordering we may suppose that there exists $1\le k\le n$ such that $g_1\ge\cdots\ge g_k \geq 0 \geq g_{k+1}\ge\cdots\ge g_n$, and then the problem \eqref{optplans} becomes equivalent to that of transporting at minimal cost (where the transport cost is equal to the distance) the masses $|g_1|,\ldots,|g_k|$ situated at points $x_1,\ldots,x_k$ to masses $|g_{k+1}|,\ldots,|g_n|$ situated at the points $x_{k+1},\ldots, x_n$.
\end{example}
\subsection{Groups with nonbranching optimal transport plans}\label{branched_intro}
We next introduce the notion of branching, expressed in terms of group coefficients only. We point out the first basic example of branched transport as considered first by Gilbert in 1967 \cite{gilbert} and more recently formalized by Xia \cite{xia}, which appears for the case $G=\mathbb R$ with the norm $|x|_G \mathrel{\mathop:}= |x|^\alpha$, for $\alpha\in\ ]0,1[$. Then we have the strict subadditivity $|a+b|^\alpha < |a|^\alpha +|b|^\alpha$ for $a,b>0$, which is the fundamental reason why branching for optimal transport occurs (see the discussion in \cite{bcm}). The condition from Definition \ref{nbplans} below is precisely preventing this to occur, in the general case.
\begin{definition}
[Nonbranching optimal transport plans]
\label{nbplans}
Assume that $(G,|\cdot|)$ has optimal transport plans. We say that $(G,|\cdot|)$ has \textbf{nonbranching optimal transport plans} if for any finite collection $g_1,\ldots,g_n\in G$ with $\sum_{i=1}^n g_i=0$ there are $g_{ij}\in G$, $i,j = 1,\dots n$, with
\begin{equation}
	\tag{NBP}
	\label{nbp}
	\left\{
		\begin{array}{ll}
			g_{ij}=-g_{ji} & \text{ for all }i,j=0,\ldots,n, \\
			g_{ii}=0 &\text{ for all }i=1,\ldots,n,\\
			g_i=\sum_{j=1}^n g_{ij} &\text{ for all }i=1,\ldots,n, \\ 
			|g_i|=\sum_{j=1}^n|g_{ij}| &\text{ for all }i=1,\ldots,n.
		\end{array}
	\right.
\end{equation}
\end{definition}
\noindent Once we know that a group $G$ has nonbranching optimal transport plans, a next regularity condition to require is that the graph encoding how $G$-mass is transported along transport plans, does not have cycles. This kind of requirement turns out to generate interesting geometric conditions on $G$, and is the content of the next definition:
\begin{definition}[Acyclic nonbranching optimal transport plans]
\label{acyclic_def}
We say that the normed Abelian group $(G,|\cdot|)$ has \textbf{acyclic nonbranching optimal transport plans} if for any finite collection $g_1,\ldots,g_n\in G$ with $\sum_{i=1}^n g_i=0$ there are $g_{ij}\in G$, $i,j = 1,\dots n$, as in \eqref{nbp} such that the graph with vertices $V \mathrel{\mathop:}= \{g_i \ |\ 1\le i\le n\}$ and edges $\{\{g_i,g_j\}\in 2^V \ | \  g_{ij} \neq 0\}$ doesn't contain cycles.
\end{definition}
\noindent We will note in Section~\ref{class_cotpczt} that a first necessary condition on $G$ for having acyclic nonbranching optimal trasport plans, perhaps geometrically appealing in its own right, is the following:
\begin{definition}[collinear zero-mean triples]\label{collineardef}
Let $(G,|\cdot|)$ be a normed Abelian group. We say that $a,b,c\in G$ form a \textbf{zero-mean triple} if $a+b+c=0$. We say that a nontrivial triple is \textbf{collinear} if
\begin{equation}\label{collinear}
\text{One of }\ \ |a|+|b|=|c|,\ \ |a|+|c|=|b|,\ \ |b|+|c|=|a|\ \ \text{ holds.}
\end{equation}
We say that $(G,|\cdot|)$ has \textbf{collinear zero-mean triples} if 
\begin{equation}
	\tag{CZT}
	\label{czt}
	\mbox{ all zero-mean triples are collinear. }
\end{equation}
\end{definition}

\smallskip

\subsection{Branched transport is a minimal filling problem}
In fact, the minimization problems that are considered under the denomination of ``branched optimal transport'' are usually not formulated in the form of an optimal transport problem in which marginals are fixed and one minimizes over transport plans, but rather they are formulated exactly as a minimal filling problem. This link to the minimization among $G$-chains is also pointed out in \cite{xia}. Motivated by this fact, we introduce the nonbranching property defined in terms of minimal fillings.

\medskip

\noindent The spaces of rectifiable and flat $k$-dimensional chains in a metric space $X$ with coefficients in $G$ were defined by Fleming \cite{fleming} for $X = \mathbb R^n$ and extended by De Pauw and Hardt \cite{depauwhardt} to arbitrary metric spaces. A $0$-dimensional rectifiable chain in $\mathcal R_0(X;G)$ with finite support is simply a finite union of points $p_1,\dots,p_n$ in $X$ to each of which a coefficient $g_i$ in $G$ is associated. Such a chain $T$ is denoted by
\begin{equation*}
T = \sum_{i =1}^n g_i\curr{p_i} \ .
\end{equation*}
If $\gamma : [0,1] \to X$ is a Lipschitz path and $g\in G$, then a $1$-dimensional Lipschitz $G$-chain is given by $\gamma_\#(g\curr{0,1}) \in \mathcal L_1(X;G)$ and its mass is $\mathbf M(S) = |g| \operatorname{length}(\gamma)$ in case $\gamma$ is injective. See \cite{depauwhardt} for the precise definition of mass in this context. Moreover, any Lipschitz chain in $\mathcal L_1(X;G)$ is the finite sum of chains of this type. A rectifiable chain $S \in \mathcal R_1(X;G)$ is induced by a $G$-valued orientation $\mathbf g : A \to G$ on an oriented $1$-rectifiable Borel set $A \subset X$ such that the mass $\mathbf M(S) = \int_A |\mathbf g| \, d\mathcal H^1$ is finite, see \cite[\S 3]{depauwhardt} for more details. As a subset of rectifiable chains, a polyhedral chain $S \in \mathcal P_1(X;G)$ in a normed space $X$ is given by $S = \sum_{\sigma \in K^{(1)}} g_\sigma \curr \sigma$, where $g_\sigma \in G$ and $K \subset X$ is a finite oriented $1$-dimensional simplicial complex, see \cite[p.~1052]{depauwhardt2}. 

\medskip

\noindent The \textbf{filling problem} for $T \in \mathcal R_0(X;G)$ is the following minimization problem
\[
\rm{Fill}_{G,X}(T) \mathrel{\mathop:}= \inf\bigl\{ \mathbf M(C)\ |\ C\in\mathcal R_1(X;G),\ \partial C=T\bigr\} \ ,
\]
and as usual if $T$ is not a boundary, then the filling length is infinite. It should be noted that in a Lipschitz path connected metric space $X$, the chain $\sum_{i=1}^n g_i\curr{p_i}$ is a boundary of elements in $\mathcal R_1$ if and only if $\sum_{i=1}^n g_i = 0$. This can be proved by induction for Lipschitz chains using the identity \cite[Theorem~4.2.1]{depauwhardt}:
\[
\partial \gamma_\#(g\curr{0,1}) = g\curr{\gamma(1)} - g\curr{\gamma(0)} \ .
\]
For general rectifiable chains this follows by approximation, \cite[Theorem~4.3.4]{depauwhardt}. 

\medskip

\noindent First we state a definition that turns out to be equivalent to Definition~\ref{nbplans}, see Theorem \ref{equivalence_thm}.
\begin{definition}
\label{fillingsongeodesics}
We say that $(G,|\cdot|)$ has \textbf{nonbranching minimal fillings} if for all $g_1,\dots,g_n \in G$ such that $g_1+\cdots+g_n=0$ and all $x_1,\dots,x_n \in X$ in a geodesic metric space there is a $S \in \mathcal L_1(X;G)$ with
\begin{enumerate}
	\item $\partial S = T \mathrel{\mathop:}= \sum_{i=1}^n g_i \curr{x_i}$,
	\item $\rm{spt}(S) \subset \bigcup_{1 \leq i < j \leq n}[x_i,x_j]$, where $[x_i,x_j]$ is a geodesic segment connecting $x_i$ with $x_j$ in $X$,
	\item $\mathbf M(S) = \rm{Fill}_{G,X}(T)$.
\end{enumerate}
\end{definition}
\noindent Note that if the open ended geodesic segments $(x_i,x_j)$ in the definition above are pairwise disjoint, then the constancy theorem \cite[Theorem~6.4]{depauwhardt2} implies that
\[
S = \sum_{1 \leq i < j \leq n} g_{ij}\curr{x_j,x_i} \ ,
\]
for some $g_{ij} \in G$, $i,j = 1,\dots,n$, where we set $g_{ii} = 0$ and $g_{ji} = -g_{ij}$ for $j > i$. If we further assume that $x_i \neq x_j$ for $i \neq j$ the condition $\partial S = T$ implies that $g_i = \sum_{j=1}^n g_{ij}$ for all $i=1,\dots,n$.

\subsection{Main results}
As mentioned above, the two conditions of nonbranching (i.e.\ the one based on transport plans and the one based on fillings) are equivalent. We may interpret this by saying that that subadditivity phenomenon highlighted at the beginning of Subsection~\ref{branched_intro} is robust enough to pass to the case of general normed Abelian groups.
\begin{theorem}
\label{equivalence_thm}
Let $G$ be a normed Abelian group. The following are equivalent:
\begin{enumerate}
	\item $G$ has nonbranching optimal transport plans.
	\item $G$ has nonbranching minimal fillings.
\end{enumerate}
\end{theorem}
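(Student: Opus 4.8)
The plan is to prove each implication separately, in both cases by relating a transport plan $(g_{ij})$ on a finite point set to a Lipschitz $1$-chain supported on geodesic segments.

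\emph{From nonbranching minimal fillings to nonbranching optimal transport plans.} Fix $g_1,\dots,g_n \in G$ with $\sum_i g_i = 0$ and an $n$-point metric space $(\{x_1,\dots,x_n\},d_X)$. First I would embed this finite metric space isometrically into a geodesic metric space $X$: one can take for instance a weighted graph whose vertices are $x_1,\dots,x_n$ and whose edges $\{x_i,x_j\}$ have length $d_X(x_i,x_j)$, with the induced length metric; by perturbing edge lengths slightly, or by a direct argument, one arranges that the geodesic between $x_i$ and $x_j$ is the single edge $[x_i,x_j]$ and that the open edges $(x_i,x_j)$ are pairwise disjoint. (Alternatively one embeds into an $\R$-tree or into $\ell^\infty$ and takes geodesic hulls; the precise construction is a routine point-set step.) Applying the nonbranching minimal fillings property to $T = \sum_i g_i\curr{x_i}$ yields $S \in \mathcal L_1(X;G)$ with $\partial S = T$, support in $\bigcup_{i<j}[x_i,x_j]$, and $\mathbf M(S) = \rm{Fill}_{G,X}(T)$. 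Since the open geodesics are disjoint, the constancy theorem (as quoted in the excerpt) gives $S = \sum_{i<j} g_{ij}\curr{x_j,x_i}$ with $g_{ij} = -g_{ji}$, $g_{ii}=0$, and $\partial S = T$ forces $g_i = \sum_j g_{ij}$. Then $\mathbf M(S) = \sum_{i<j}|g_{ij}|\,d_X(x_i,x_j) = \rm{Fill}_{G,X}(T)$, and one checks $\rm{Fill}_{G,X}(T) = OT(\sum_i g_i\curr{x_i})$: the inequality $\le$ is immediate since every competitor $(h_{ij})$ in \eqref{optplans} gives a polyhedral chain $\sum_{i<j} h_{ij}\curr{x_j,x_i}$ with boundary $T$ and mass $\sum_{i<j}|h_{ij}|d_X(x_i,x_j)$; the inequality $\ge$ is the substantive point and uses that $X$ was built so that geodesics are exactly the edges — any rectifiable filling can be pushed onto the $1$-skeleton without increasing mass (this is where one invokes that $X$ is essentially a graph, or a projection/retraction argument). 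Hence the infimum defining $OT$ is attained by $(g_{ij})$; moreover the minimality $\mathbf M(S)=\rm{Fill}(T)$ together with the triangle inequality forces $|g_i| = \sum_j |g_{ij}|$ for each $i$, which is exactly \eqref{nbp}. Finally one must also check that $G$ \emph{has} optimal transport plans in the first place, i.e.\ the infimum is attained for \emph{every} finite configuration, not just that a nonbranching minimizer exists when one exists — but the construction above produces a minimizer for an arbitrary configuration, so this is automatic.

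\emph{From nonbranching optimal transport plans to nonbranching minimal fillings.} Conversely, fix $g_1,\dots,g_n$ with $\sum_i g_i = 0$ and points $x_1,\dots,x_n$ in a geodesic metric space $X$. Apply the nonbranching optimal transport plan property to the finite metric space $(\{x_1,\dots,x_n\}, d_X|)$ to obtain $g_{ij}$ satisfying \eqref{nbp} and realizing $OT(\sum_i g_i\curr{x_i})$. Choose geodesic segments $[x_i,x_j] \subset X$ and set $S \mathrel{\mathop:}= \sum_{i<j} \gamma_{ij\,\#}(g_{ij}\curr{0,1}) \in \mathcal L_1(X;G)$, where $\gamma_{ij}$ parametrizes $[x_j,x_i]$. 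Then $\partial S = \sum_{i<j} g_{ij}(\curr{x_i}-\curr{x_j}) = \sum_i g_i\curr{x_i} = T$ by \eqref{nbp}, condition (2) of Definition~\ref{fillingsongeodesics} holds by construction, and $\mathbf M(S) \le \sum_{i<j} |g_{ij}|\,d_X(x_i,x_j) = OT(T)$. It remains to show $OT(T) = \rm{Fill}_{G,X}(T)$ so that $\mathbf M(S)$ is minimal. The inequality $\rm{Fill} \le OT$ is the easy direction (polyhedral competitors as above). For $OT \le \rm{Fill}$, take any $C \in \mathcal R_1(X;G)$ with $\partial C = T$; the key is to produce, from $C$, a transport plan $(h_{ij})$ with $\sum_{i<j}|h_{ij}|d_X(x_i,x_j) \le \mathbf M(C)$. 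I would do this via a slicing/coarea argument: slice $C$ by distance functions or decompose $C$ into indecomposable pieces (curves) using the structure theory of $1$-rectifiable $G$-chains, each indecomposable piece being (morally) a curve from some $x_i$ to some $x_j$ carrying a coefficient $h$ with total mass at least $|h|\,d_X(x_i,x_j)$ by the length-minimizing property of geodesics; summing and regrouping coefficients at each $x_i$ reconstitutes the constraints $g_i = \sum_j h_{ij}$. This lower bound $OT \le \rm{Fill}$ is the main obstacle: it requires the decomposition of rectifiable $G$-chains into "elementary" curves with controlled boundary and mass, which is the genuinely technical ingredient (the $G = \R$ or $G = \Z$ versions are classical, but one wants it for a general normed Abelian group, so one either cites or adapts a flat-chain decomposition result). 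Once both inequalities are in hand, $\mathbf M(S) = OT(T) = \rm{Fill}_{G,X}(T)$ and $S$ satisfies all three conditions of Definition~\ref{fillingsongeodesics}, completing the proof.

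I expect the decomposition/lower-bound step ($OT \le \rm{Fill}$, respectively the "push onto the skeleton" step in the other direction) to be the crux; the rest is bookkeeping with the identity $\partial \gamma_\#(g\curr{0,1}) = g\curr{\gamma(1)}-g\curr{\gamma(0)}$, the constancy theorem, and the triangle inequality for $|\cdot|_G$. A clean way to isolate the hard part is to first establish the auxiliary equality $OT(\sum_i g_i\curr{x_i}) = \rm{Fill}_{G,X}(\sum_i g_i\curr{x_i})$ for points in a geodesic space as a standalone lemma, after which both implications of the theorem become almost formal translations between the two languages.
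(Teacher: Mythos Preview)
Your proposal has genuine gaps in both directions, and they stem from the same misconception: you treat the identity $OT = \rm{Fill}$ as a standalone lemma that can be proved independently of the hypotheses, but this identity is essentially the content of the theorem and is \emph{false} for general normed Abelian groups.

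For $(1)\Rightarrow(2)$, your plan for the hard inequality $OT \le \rm{Fill}$ is to decompose an arbitrary rectifiable $G$-chain $C$ into ``elementary curves'' and read off a transport plan. Such a decomposition is not available for general $G$, and more to the point your argument never invokes the \eqref{nbp} hypothesis; yet $OT \le \rm{Fill}$ fails outright for groups such as $(\mathbb R,|\cdot|^\alpha)$ with $\alpha\in(0,1)$ --- that is precisely what branched transport means. The paper's proof is structurally different: it embeds $\{x_1,\dots,x_n\}$ isometrically into $\ell^m_\infty$, pushes the filling forward, approximates by a polyhedral chain in general position, and then \emph{iteratively removes each non-boundary vertex} $v$ by applying \eqref{nbp} to the coefficients on the edges incident to $v$. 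Each such step replaces a star by direct segments between its leaves without increasing mass; after finitely many steps only segments between the $x_i'$ remain. Thus \eqref{nbp} is used locally and repeatedly, not bypassed.

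For $(2)\Rightarrow(1)$, you assert that ``the minimality $\mathbf M(S)=\rm{Fill}(T)$ together with the triangle inequality forces $|g_i| = \sum_j |g_{ij}|$''. This is unjustified: minimality of $\sum_{i<j}|g_{ij}|d(x_i,x_j)$ over competitors does not by itself pin down $\sum_j|g_{ij}|$ at each vertex. You also conflate the two halves of Definition~\ref{nbplans}: the \eqref{nbp} decomposition is purely group-theoretic and carries no metric, so it cannot be obtained simply by choosing an $OT$-minimizer for the given $d_X$. The paper instead builds a \emph{specific} geodesic graph with auxiliary ``cone'' vertices $c_k$ at distance $1+\tfrac{1}{k}$ from every $x_i$ and $d(x_i,x_j)=2$. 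The cone filling $C_k=\sum_i g_i\curr{c_k,x_i}$ is a legitimate competitor with $\mathbf M(C_k)=(1+\tfrac1k)\sum_i|g_i|$, so comparing with the minimal edge-supported filling and letting $k\to\infty$ yields $\sum_{i,j}|g_{ij}|\le\sum_i|g_i|$; combined with the triangle inequality $|g_i|\le\sum_j|g_{ij}|$ this forces equality at every $i$. That cone-point construction is the missing idea.
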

\noindent Our next step is to classify finitely generated groups that have nonbranching optimal transport or minimal fillings. First of all we note if $A$ and $B$ have nonbranching optimal transport plans, so does $A \times B$ with norm $|(a,b)| = \lambda|a|_A + \mu|b|_B$, where $\lambda,\mu > 0$ are arbitrary, see Lemma~\ref{directsum_lem}. This suggests that groups with nonbranching optimal transport plans are $\ell_1$-sums of elementary building blocks. Our next main result proves this, and completely classifies finitely generated normed Abelian groups which have nonbranching optimal transport plans (or minimal fillings, which is equivalent by Theorem~\ref{equivalence_thm}). 

\medskip 

\noindent Note that beyond finitely generated groups the class of normed Abelian groups is very large, and in particular contains the class of Banach spaces as a special subclass. Keeping this in mind, we also give a complete classification for the case where $(G,|\cdot|)$ is a finite-dimensional normed vector space, and we leave a more general classification of non-finitely generated groups with nonbranching optimal transport to a future work.
\begin{theorem}
\label{class_otp}
If $G$ is a normed Abelian group with nonbranching optimal transport plans, then the following two classification statements hold:
\begin{enumerate}
\item If $G$ is finitely generated, then $G$ is isometrically isomorphic to $\mathbb Z^k \times \mathbb Z_2^l$ with norm
\[
|(n_1,\dots,n_k,f_1,\dots,f_l)| = \sum_{i=1}^k \mu_i |n_i| + \sum_{j=1}^l \lambda_j |f_j| \ ,
\]
for some $\mu_i,\lambda_j > 0$.
\item If $(G,|\cdot|)$ can be endowed with a multiplication by scalars such that it becomes a finite dimensional normed vector space, then $G$ is isometrically isomorphic (as a normed vector space) to $\ell_1^n$ for some $n \geq 1$, where $\ell_1^n$ is the vector space $\mathbb R^n$ with norm $\|x\| = \sum_{i=1}^n|x_i|$.
\end{enumerate}
\end{theorem}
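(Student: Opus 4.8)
The plan is to prove the two classification statements by first extracting strong structural consequences from the nonbranching hypothesis and then invoking standard structure theorems.

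\medskip

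\noindent\textbf{Step 1: Nonbranching forces rigid splitting of small configurations.} First I would apply the definition of nonbranching optimal transport plans to the configuration $n=3$ with $g_1,g_2,g_3$ satisfying $g_1+g_2+g_3=0$. The condition \eqref{nbp} gives $g_{ij}$ with $g_i=\sum_j g_{ij}$ and $|g_i|=\sum_j|g_{ij}|$. For a zero-mean triple, unwinding these equations (with $g_{12},g_{13},g_{23}$ the only free quantities, each $g_{ii}=0$) shows that the triple must in fact be \emph{collinear} in the sense of Definition~\ref{collineardef}: one of $|a|+|b|=|c|$ etc. holds, because the norm of one coefficient must be achieved as a sum of the two ``half-edges'' emanating from its vertex, and matching this with the other vertices forces one of the three to be the sum of the other two. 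So nonbranching $\Rightarrow$ \eqref{czt}. I expect this to be a short computation once the bookkeeping of the $g_{ij}$ is set up, analogous to the remark preceding the statement.

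\medskip

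\noindent\textbf{Step 2: From \eqref{czt} to additivity of the norm along the group operation.} The key algebraic consequence I would extract is: whenever $|a+b|=|a|+|b|$ fails to hold ``degenerately'', the group splits. More precisely, \eqref{czt} applied to the triple $(a,b,-(a+b))$ says that one of $|a|+|b|=|a+b|$, $|a|+|a+b|=|b|$, $|b|+|a+b|=|a|$ always holds. I would use this to show that the norm is ``flat'' enough that $G$ embeds isometrically into an $\ell_1$-type object. Concretely, for the finitely generated case (part (1)), I would invoke the structure theorem for finitely generated Abelian groups: $G\cong \mathbb Z^k\times F$ with $F$ finite. The torsion part $F$ must be killed down to a power of $\mathbb Z_2$: if $G$ had an element $g$ of torsion $m\ge 3$, then $(g,g,\ldots)$-type triples or the triple $(g,g,-2g)$ (note $-2g\ne 0$, and if $m=3$ then $-2g=g$) would violate \eqref{czt} — one checks that $|g|=|2g|$ is forced together with $2|g|=|2g|$, giving $|g|=0$, a contradiction. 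For torsion $2$ elements $f$, the triple $(f,f,0)$ is trivial, so no obstruction arises, and one shows directly that $|f_1+f_2|=|f_1|+|f_2|$ for independent torsion-$2$ elements using \eqref{czt} on $(f_1,f_2,f_1+f_2)$ (here $-(f_1+f_2)=f_1+f_2$). For the free part, \eqref{czt} together with homogeneity-type arguments along each $\mathbb Z$-direction pins down that the norm is $\sum \mu_i|n_i|$; cross terms are eliminated because any mixed element $e_i+e_j$ sitting in a triple with $e_i,e_j$ would have to be collinear, forcing $|e_i+e_j|=|e_i|+|e_j|$, and iterating gives the $\ell_1$ formula. Then Lemma~\ref{directsum_lem} (or its converse direction) confirms this norm indeed has nonbranching plans, closing the characterization.

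\medskip

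\noindent\textbf{Step 3: The finite-dimensional normed vector space case.} Here $G=\mathbb R^n$ with some norm $\|\cdot\|$, and \eqref{czt} must hold for all zero-mean triples. I would argue that \eqref{czt} for the vector space forces $\|\cdot\|$ to be a polyhedral norm whose unit ball is a parallelotope: take any two linearly independent $v,w$; the triple $(v,w,-v-w)$ and scaled versions $(tv,sw,-tv-sw)$ must all be collinear, and letting $s,t$ vary one shows the unit sphere contains flat pieces forcing $\|v+w\|=\|v\|+\|w\|$ after a suitable change of sign, i.e.\ the norm is additive on a spanning set of ``directions'', which is the defining property of $\ell_1$. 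A clean way: show that for every pair of points on the unit sphere the segment between them, or between one and the antipode of the other, lies on the sphere; a classical fact then identifies the unit ball as an $n$-dimensional cross-polytope's dual, i.e.\ a cube, hence $(\mathbb R^n,\|\cdot\|)\cong \ell_1^n$ isometrically.

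\medskip

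\noindent\textbf{Main obstacle.} The hard part will be Step 2/Step 3: upgrading the \emph{local} three-point collinearity condition \eqref{czt} to the \emph{global} isometric rigidity ($\ell_1$ structure, parallelotope unit ball). The three-point condition is weak a priori — it constrains only triples — and one must propagate it across the whole group/space by cleverly choosing families of triples (e.g.\ $(a,b,-a-b)$, $(a, tb, -a-tb)$, and telescoping sums $(\sum a_i)$ built from collinear pieces) and ruling out the ``wrong'' branch of the collinearity alternative using the norm axioms (1)–(3). I would expect to need an inductive argument on the number of generators (resp.\ the dimension), peeling off one $\ell_1$-summand at a time, where the inductive step is exactly the place where one shows a chosen direction splits off isometrically.
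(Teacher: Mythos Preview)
Your Step~1 is incorrect, and this undermines the entire strategy. Nonbranching optimal transport plans do \emph{not} imply \eqref{czt}. Concretely, take $G=\mathbb Z^2$ (or $\mathbb R^2$) with the $\ell_1$-norm; this group has nonbranching optimal transport plans by Lemma~\ref{directsum_lem}, and it is one of the groups you are trying to characterize. But the zero-mean triple $a=(1,1)$, $b=(1,-1)$, $c=(-2,0)$ has $|a|=|b|=|c|=2$, so none of the collinearity equations hold. The nonbranching decomposition here is $g_{12}=(0,1)$, $g_{13}=(1,0)$, $g_{23}=(1,0)$, which satisfies \eqref{nbp} with all three off-diagonal entries nonzero: the associated graph is a triangle. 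What you have rediscovered is Lemma~\ref{collinearimplication}, which says that \emph{acyclic} nonbranching implies \eqref{czt}; the acyclicity hypothesis is exactly what lets you set one $g_{ij}=0$ and collapse the triangle. Without it, your unwinding in Step~1 does not go through. Note too that if Step~1 were correct your theorem would be false, since by Proposition~\ref{class_czt} the only complete groups with \eqref{czt} are $\mathbb R,\mathbb Z,\mathbb Z_2,\mathbb Z_4,\mathbb Z_2\times\mathbb Z_2$, and $\mathbb Z^2$ is not on that list.

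The paper's argument avoids \eqref{czt} entirely. The substitute for ``collinear triple'' is the notion of an \emph{indecomposable} element: $g$ such that $|h|+|g-h|=|g|$ forces $h\in\{0,g\}$. One then applies \eqref{nbp} not to arbitrary triples but to configurations of the form $g_1=\cdots=g_n=g$, $g_{n+1}=-ng$ (and variants with two indecomposable generators), where indecomposability forces each $g_{ij}$ with $i\le n$ to be either $0$ or $g$; this yields $|ng|=n|g|$ and, for distinct indecomposables $g,h$, the identity $|kg+lh|=|kg|+|lh|$. In a finitely generated group the indecomposable elements generate, giving the $\ell_1$-structure. For part~(2) the role of indecomposable elements is played by extreme points of the unit ball: Lemma~\ref{extreme_lem} shows extreme points are indecomposable in the above sense, and Krein--Milman supplies enough of them to span. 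So the key idea you are missing is to apply nonbranching to \emph{large} configurations built from special (indecomposable or extreme) elements, rather than trying to squeeze global rigidity out of arbitrary triples.
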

\noindent Our next result is a complete classification of groups with acyclic nonbranching transport plans. In this case, within the class of all normed Abelian groups, we find that only four groups satisfy the condition:
\begin{theorem}[Classification of groups with acyclic nonbranching optimal transport plans]
\label{class_cotp}
The following ones are the only complete normed Abelian groups that have acyclic nonbranching optimal transport plans, up to rescaling of their norm by a constant factor:
\begin{itemize}
\item $\mathbb R$ with the Archimedean norm,
\item $\mathbb Z$ with the Archimedean norm,
\item $\mathbb Z_2$,
\item $\mathbb Z_2\times \mathbb Z_2$ with norm satisfying $|(1,0)|=1, |(0,1)|=\alpha, |(1,1)|=1+\alpha$ for any choice of $\alpha \geq 1$.
\end{itemize}
\end{theorem}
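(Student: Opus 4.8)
The plan is to first establish that a complete normed Abelian group $G$ with acyclic nonbranching optimal transport plans must satisfy the (CZT) condition (collinear zero-mean triples), using the observation announced in the excerpt right after Definition~\ref{collineardef}: apply the acyclicity to the three-element collection $a,b,c$ with $a+b+c=0$, and note that the edge graph on $\{a,b,c\}$ has at most two edges, which forces (after tracking which $g_{ij}$ vanish) one of the three norm identities in \eqref{collinear}. Next I would exploit (CZT) structurally. Taking $b=c$ (so $a=-2c$) shows that for every $c\in G$ either $|2c|=0$ or $|2c|=2|c|$; more generally, pairing $g$ with $g$ and then iterating, one sees that $G$ is ``flat'' along cyclic subgroups in a very rigid way — each cyclic subgroup $\langle c\rangle$ is isometric either to $\mathbb Z$ (Archimedean), to a finite cyclic group where the norm is $k\mapsto \min(k,m-k)$ forced to degenerate, or to $\mathbb Z_2$. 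The key intermediate claim will be: (CZT) together with completeness forces $G$ to have no element of order $>2$ except the infinite-order case, i.e.\ either $G$ is torsion-free or every nonzero element has order exactly $2$.

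In the torsion-free case I would show $G$ embeds isometrically into a normed vector space and that (CZT) forces the norm to be linear on lines, hence $G$ is (a dense subgroup of) $\mathbb R$ with a multiple of the Archimedean norm; completeness then gives exactly $\mathbb R$ or, in the discrete case, $\mathbb Z$. Here I would lean on Theorem~\ref{class_otp}(2) for the vector-space reduction: acyclic nonbranching implies nonbranching, so a finite-dimensional piece must be $\ell_1^n$, and then (CZT) applied to the standard generators $e_1,e_2,-e_1-e_2$ gives $|e_1|+|e_2|=|e_1+e_2|$ which in $\ell_1^n$ is automatic — so I instead need acyclicity to kill $n\ge 2$: in $\ell_1^2$ the collection $e_1,-e_1,e_2,-e_2$ (sum zero) has a unique nonbranching plan whose support graph is a $4$-cycle (or forces splitting that still leaves a cycle), contradicting acyclicity; this pins $n=1$. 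In the $2$-torsion case, every nonzero element is an involution, $G$ is a vector space over $\mathbb F_2$, and (CZT) says that any three elements summing to zero (i.e.\ any two distinct nonzero elements $a,b$ and their sum $a+b$) satisfy a collinearity relation among $|a|,|b|,|a+b|$. I would then argue that if $\dim_{\mathbb F_2} G\ge 3$ one can pick independent $a,b,c$ and test the zero-sum collection $a,b,c,a+b+c$, producing an unavoidable cycle in every nonbranching plan; this forces $\dim_{\mathbb F_2}G\le 2$, giving $\mathbb Z_2$ or $\mathbb Z_2\times\mathbb Z_2$, and in the latter case (CZT) on the three nonzero elements forces exactly the stated norm relation $|(1,1)|=|(1,0)|+|(0,1)|$ with the normalization $|(1,0)|=1$, $|(0,1)|=\alpha\ge 1$.

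Finally I would check the converse, that each of the four listed groups does have acyclic nonbranching optimal transport plans: for $\mathbb R$ and $\mathbb Z$ this is the classical fact that one-dimensional optimal transport is realized by a nonbranching, acyclic (monotone) plan; for $\mathbb Z_2$ and $\mathbb Z_2\times\mathbb Z_2$ it is a small finite case-check using the explicit norm, building an acyclic plan by a greedy matching argument on the points carrying equal coefficients. The main obstacle I anticipate is the rigidity step in the $2$-torsion case: showing that (CZT) plus acyclicity genuinely forbids $\dim_{\mathbb F_2}G\ge 3$ requires exhibiting, for \emph{every} possible nonbranching plan on a well-chosen zero-sum configuration, a cycle in its support — and since the splitting coefficients $g_{ij}$ live in the group and can themselves be nonzero involutions, one must rule out clever re-routings rather than a single canonical plan; I expect this to be the combinatorial heart of the argument, whereas the torsion-free case and the converse are comparatively routine.
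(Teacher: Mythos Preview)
Your plan has two concrete gaps.

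\textbf{First}, your ``key intermediate claim'' that (CZT) together with completeness forces every element to have order $2$ or $\infty$ is false: the group $\mathbb Z_4$ with $|1|=1$, $|2|=2$ satisfies (CZT) --- every zero-mean triple in $\mathbb Z_4$ is collinear with this norm --- and it is complete. This is exactly why the paper's Proposition~\ref{class_czt} lists $\mathbb Z_4$ among the (CZT) groups. So (CZT) alone does not give you the dichotomy you want; you must separately exclude $\mathbb Z_4$, and the correct mechanism is that $\mathbb Z_4$ fails the \emph{nonbranching} property (not merely acyclicity), which the paper gets from Theorem~\ref{class_otp}(1). Your outline never confronts $\mathbb Z_4$.

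\textbf{Second}, your proposed acyclicity argument in the torsion-free case is wrong. For the collection $e_1,-e_1,e_2,-e_2$ in $\ell_1^2$, the plan $g_{12}=e_1$, $g_{34}=e_2$, all other $g_{ij}=0$, is a perfectly good acyclic nonbranching transport plan (its graph is two disjoint edges), so this configuration does \emph{not} produce a cycle. What actually excludes $\ell_1^2$ is (CZT) itself: the zero-mean triple $(1,1),(1,-1),(-2,0)$ has norms $2,2,2$, which are not collinear. More generally, your detour through Theorem~\ref{class_otp}(2) is both unnecessary and inapplicable, since a torsion-free (CZT) group is not a priori a vector space.

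The paper's route is more direct and avoids both pitfalls: it first classifies \emph{all} complete (CZT) groups (Proposition~\ref{class_czt}), obtaining exactly five candidates $\mathbb R,\mathbb Z,\mathbb Z_2,\mathbb Z_4,\mathbb Z_2\times\mathbb Z_2$; then it eliminates $\mathbb Z_4$ via Theorem~\ref{class_otp}(1) (it is not an $\ell_1$-product of copies of $\mathbb Z$ and $\mathbb Z_2$, hence not even nonbranching); and finally it verifies in Lemma~\ref{check_groups} that the remaining four have acyclic nonbranching plans. The torsion-free part of the (CZT) classification is done by building an explicit isometric homomorphism $G\to\mathbb R$ from an order relation induced by (CZT), not by invoking the $\ell_1^n$ classification.
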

\noindent The above theorem is based on the complete classification of groups with collinear zero-mean triples:
\begin{proposition}[Classification of groups with collinear zero-mean triples]
\label{class_czt}
The following ones are the only complete normed Abelian groups that have \eqref{czt}, up to rescaling of their norm by a constant factor:
\begin{itemize}
\item $\mathbb R$ with the Archimedean norm,
\item $\mathbb Z$ with the Archimedean norm,
\item $\mathbb Z_2$,
\item $\mathbb Z_4$ with norm satisfying $|1|=1, |2|=2$,
\item $\mathbb Z_2\times \mathbb Z_2$ with norm satisfying $|(1,0)|=1, |(0,1)|=\alpha, |(1,1)|=1+\alpha$ for any choice of $\alpha \geq 1$.
\end{itemize}
\end{proposition}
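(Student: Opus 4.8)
The plan is to prove the two implications separately. For the ``if'' direction one checks \eqref{czt} directly: for $\mathbb R$ and $\mathbb Z$ with the Archimedean norm, in any zero-mean triple two entries have one sign and the third the opposite sign (allowing zeros) after relabeling, which forces one norm to be the sum of the other two; for the three finite candidates one simply runs through the finitely many zero-mean triples (for $\mathbb Z_2\times\mathbb Z_2$ the only nontrivial triple besides the ``doubling'' ones is $\{(1,0),(0,1),(1,1)\}$, with norms $1,\alpha,1+\alpha$). For the ``only if'' direction, fix a complete $(G,|\cdot|)$ with \eqref{czt}. The first and main reduction is to classify the monogenic ones: any $\langle g\rangle\leq G$ inherits \eqref{czt}, and I claim it must be isometric to $(\mathbb Z,c|\cdot|)$, to $\mathbb Z_2$, or to $\mathbb Z_4$ with $|1|=c,\ |2|=2c$, for some $c>0$. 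Writing $\phi(k)=|kg|$ (subadditive and symmetric), collinearity of $(g,g,-2g)$ gives $\phi(2)=2\phi(1)$ whenever $2g\neq 0$, and an induction on $k$ using the triples $(g,kg,-(k+1)g)$ and $(2g,(k-1)g,-(k+1)g)$ then forces $\phi(k)=k\phi(1)$ on an initial segment of indices. If $g$ had order $3$ the triple $(g,g,g)$ would be noncollinear; if $g$ had order $m\geq 5$ the induction reaches the wrap-around $kg=-(m-k)g$ at $k=\lfloor m/2\rfloor$ (resp.\ a triple $((l-1)g,2g,-(l+1)g)$ in the even case $m=2l$), producing a zero-mean triple with three equal nonzero norms, hence noncollinear. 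Thus every element of $G$ has order $1$, $2$, $4$, or $\infty$, and the cyclic norms are forced as claimed.

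Next I split on the structure of $G$. If $G$ has exponent $2$, i.e.\ is an $\mathbb F_2$-vector space, I show that three independent elements would generate $\mathbb Z_2^3$ with a \eqref{czt}-norm, which is impossible: taking an element $m$ of maximal norm, \eqref{collinear} forces $m$ to be the strict maximum on each of the three Fano lines through it, so the other six elements split into complementary pairs summing to $|m|$, and a short case analysis over the four remaining lines gives a contradiction. Hence $G$ has $\mathbb F_2$-dimension $0$, $1$ or $2$, so $G\cong\mathbb Z_2$ or $\mathbb Z_2\times\mathbb Z_2$; on $\mathbb Z_2^2$ the collinearity of $\{(1,0),(0,1),(1,1)\}$ forces $|(1,1)|=|(1,0)|+|(0,1)|$, giving exactly the stated one-parameter family. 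If instead $G$ is torsion with an element of order $4$, then $G$ has exponent $4$, and I check that $\mathbb Z_4\times\mathbb Z_2$ carries no \eqref{czt}-norm: the monogenic structure gives $|(1,0)|=c$, $|(2,0)|=2c$, and (since $(1,1)$ has order $4$) $|(1,1)|=c$; then collinearity of $((1,0),(0,1),-(1,1))$ forces $|(0,1)|=2c$, and of $((2,0),(0,1),-(2,1))$ forces $|(2,1)|=4c$, contradicting $|(2,1)|\leq|(1,1)|+|(1,0)|=2c$. Since any exponent-$4$ group other than $\mathbb Z_4$ contains a copy of $\mathbb Z_4\times\mathbb Z_2$, we get $G\cong\mathbb Z_4$.

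Finally, suppose $G$ has an element $g_0$ of infinite order. The same ``$|(2,1)|=4c$ versus $\leq 2c$'' computation rules out $\mathbb Z\times\mathbb Z_2$ and $\mathbb Z\times\mathbb Z_4$ as subgroups, so $G$ is torsion-free. Define $f\colon G\to\mathbb R$ by $f(h)=|h|$ when $|h+g_0|=|h|+|g_0|$ and $f(h)=-|h|$ otherwise; collinearity of $(h,g_0,-(h+g_0))$ shows these cases are exhaustive (exactly one holds for $h\neq 0$), so $f$ is well defined with $|f(h)|=|h|$ and $f(0)=0$. The core of the argument is to show, by a sign analysis that repeatedly invokes \eqref{collinear} on triples built from $h_1,h_2,g_0$ and their sums (and uses $|2h|=2|h|$ for $h\neq 0$ from the monogenic structure), that $|f(h_1)-f(h_2)|=|h_1-h_2|$. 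Then $f$ is an isometric embedding fixing $0$ of a torsion-free group, hence automatically a homomorphism, so $f(G)$ is a subgroup of $\mathbb R$ isometric to $G$; by completeness it is closed in $\mathbb R$, i.e.\ $\{0\}$, $\alpha\mathbb Z$, or $\mathbb R$, which up to rescaling yields $\mathbb Z$ or $\mathbb R$ with the Archimedean norm.

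The step I expect to be hardest is proving that $f$ is distance-preserving in the torsion-free case: \eqref{collinear} only pins down the norm of a rank-two lattice $\mathbb Z g+\mathbb Z h$ to an ``$\ell_1$-like'' shape with one undetermined orientation, and one must check that these orientation choices glue consistently across all of $G$, handling commensurable and incommensurable pairs uniformly. There is a genuinely delicate sub-case (essentially $|h_2|=|h_1|+|g_0|$ with $h_1,h_2$ on opposite ``sides'') which closes only after introducing an auxiliary zero-mean triple such as $(h_2-h_1,\,h_1+h_2,\,-2h_2)$. A lesser nuisance is organizing the monogenic induction so that the base cases $k=2,3$ and the wrap-around near $k=m/2$ are all covered, and recording that completeness is precisely what excludes dense subgroups of $\mathbb R$ such as $\mathbb Z[1/2]$, which do satisfy \eqref{czt} but are not on the list.
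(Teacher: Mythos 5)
Your proposal is correct in substance but follows a genuinely different route from the paper. The paper splits at once into torsion and torsion-free cases: torsion groups are handled via the primary decomposition of finitely generated subgroups (excluding odd cyclic factors, $\mathbb Z_{2^{m+1}}$ for $m\geq 2$, $\mathbb Z_2^3$ via minimal-norm generators, and $\mathbb Z_2\times\mathbb Z_4$); the torsion-free case is handled by the relation $a\sim b\iff |a-b|<|a|+|b|$, whose transitivity is proved by an Archimedean argument with large multiples $na$, after which $\varphi(g)=\pm|g|$ is shown to be \emph{additive} first and isometric as a consequence; the mixed case is excluded by a $2^k$-limiting computation on $\mathbb Z_2\times\mathbb Z$. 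You instead classify cyclic subgroups first (orders $1,2,4,\infty$ with forced norms), exclude $\mathbb Z_2^3$ by a maximal-norm/Fano-line case analysis (which does close, since the three lines through the maximal element force the complementary-pair relations and the remaining lines are then inconsistent), and exclude both $\mathbb Z_4\times\mathbb Z_2$ and the mixed case $\mathbb Z\times\mathbb Z_2$ by the same short computation $|(1,1)|=c$, $|(0,1)|=2c$, $|(2,1)|=4c>2c$ — noticeably simpler than the paper's limiting argument. In the torsion-free case you anchor the sign at a fixed $g_0$ and prove the \emph{isometry} property directly by bounded configurations; your auxiliary triple $(h_2-h_1,\,h_1+h_2,\,-2h_2)$, combined with $|2h|=2|h|$, is indeed exactly what closes the delicate subcase, so the large-multiple argument of the paper can be avoided. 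Two steps you state without proof deserve their short arguments: (i) ``an isometric embedding of a torsion-free $G$ into $\mathbb R$ fixing $0$ is automatically a homomorphism'' is true but not free — conjugating the translation $x\mapsto x+a$ by $f$ gives a distance-preserving self-map of $f(G)\subset\mathbb R$, hence of the form $t\mapsto\varepsilon_a t+f(a)$, and $\varepsilon_a=-1$ would force $f(2a)=0$, i.e.\ $2a=0$, excluded by torsion-freeness (the paper sidesteps this by proving additivity directly); (ii) on $\mathbb Z_2\times\mathbb Z_2$, \eqref{czt} only says that one of the three nonzero norms is the sum of the other two, so a relabeling by an automorphism is needed to reach the stated normal form, and your exponent-4 reduction quietly uses that a non-cyclic exponent-4 group contains $\mathbb Z_4\times\mathbb Z_2$ (structure theorem applied to $\langle x,y\rangle$). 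With these small additions your plan yields a complete proof, and in the torsion and mixed cases a shorter one; what the paper's equivalence-relation machinery buys in exchange is that additivity of the embedding comes first and the isometry is then immediate.
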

\noindent We then note that the groups with nonbranching optimal trasport plans as extracted in Theorem~\ref{class_otp} are endowed with a version of a global nonlinear duality, or, in more geometric terms, they have \textbf{calibrations}. This result is based on the corresponding result on the existence of calibrations for the minimum filling problem with coefficients in $\mathbb Z_2$ as obtained in \cite{pz} and on Kantorovich duality, for the cases of coefficients in $\mathbb R$ or $\mathbb Z$.
\begin{proposition}
\label{calibprop}
Let $G = \mathbb R^k \times \mathbb Z^l \times \mathbb Z_2^m$ with the $\ell_1$-norm as in Theorem~\ref{class_otp} i.e. 
\[
|(a_1,\dots,a_k,b_1,\dots,b_l,c_1,\ldots,c_m)| = \sum_{h=1}^k \lambda_h |a_h| + \sum_{i=1}^l \mu_i |b_i| + \sum_{j=1}^m \nu_j |c_j| \ ,
\]
for real numbers $\lambda_h,\mu_i,\nu_j>0$. Consider a chain $R = \sum_{i=1}^n g\curr {x_i} \in \mathcal R_0(X;G)$ such that $\sum_{i=1}^n g=0$ in a geodesic metric space $X$. Then
\begin{equation}
\label{filling_eq}
\rm{Fill}_{G,X}(R) = \max_{f_1,\dots,f_{k+l+m},T} \rm{Fill}_{G,T}\biggl(\sum_{j=1}^{k+l+m} f_{j\#} (\pi_j R) \biggr) \ ,
\end{equation}
where $T$ ranges over all finite geodesic trees and $1$-Lipschitz maps $f_i : X \to T$. Here $\pi_i : G \to G_i$ is the projection onto the $i$th factor and $\pi_i : \mathcal R_*(X;G) \to \mathcal R_*(X;G_i)$ is its induced map.
\end{proposition}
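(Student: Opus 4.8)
The plan is to split both sides of \eqref{filling_eq} along the $\ell_1$-factors of $G$ and then invoke one-dimensional duality on each factor. Write $G=G_1\times\cdots\times G_N$ with $N=k+l+m$, each $G_j$ being one of $(\mathbb R,\lambda_h|\cdot|)$, $(\mathbb Z,\mu_i|\cdot|)$, $(\mathbb Z_2,\nu_j|\cdot|)$, and let $\pi_j\colon G\to G_j$ and $\iota_j\colon G_j\hookrightarrow G$ be the projection onto and inclusion of the $j$-th factor, with induced maps $\pi_{j\#},\iota_{j\#}$ on chains acting by postcomposition on $G$-valued orientations. Since $|\mathbf g|=\sum_j|\pi_j\mathbf g|$ for $\mathbf g\in G$, integrating over orientations shows every $C\in\mathcal R_1(Z;G)$ (for any metric space $Z$) satisfies $\mathbf M(C)=\sum_j\mathbf M(\pi_{j\#}C)$ and $C=\sum_j\iota_{j\#}\pi_{j\#}C$; together with the facts that $\partial$ commutes with $\pi_{j\#}$ and $\iota_{j\#}$, that $\iota_{j\#}$ preserves mass, and that $\pi_{j\#}$ does not increase it, this yields the $\ell_1$-splitting of the filling functional
\[
\rm{Fill}_{G,Z}(Q)=\sum_{j=1}^N\rm{Fill}_{G_j,Z}(\pi_j Q)
\]
for every $Q\in\mathcal R_0(Z;G)$ with zero coefficient sum (``$\ge$'' by applying $\pi_{j\#}$ to a filling, ``$\le$'' by adding up fillings of the $\pi_jQ$ embedded via $\iota_{j\#}$ and using subadditivity of $\mathbf M$). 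Applying this with $Z=X$, $Q=R$, and with $Z=T$, $Q=\sum_jf_{j\#}(\pi_jR)$ (for which $\pi_{j'}Q=f_{j'\#}(\pi_{j'}R)$), the inequality ``$\ge$'' in \eqref{filling_eq} is immediate: for any finite geodesic tree $T$ and $1$-Lipschitz $f_j$, one has $\rm{Fill}_{G,T}(\sum_jf_{j\#}\pi_jR)=\sum_j\rm{Fill}_{G_j,T}(f_{j\#}\pi_jR)\le\sum_j\rm{Fill}_{G_j,X}(\pi_jR)=\rm{Fill}_{G,X}(R)$, the middle step using that a $1$-Lipschitz pushforward does not increase mass.

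For the reverse inequality it then suffices, by the splitting with $Z=X$, to produce for each factor $G_j$ and $S:=\pi_jR$ a finite geodesic tree $T_j$ and a $1$-Lipschitz $f_j\colon X\to T_j$ with $\rm{Fill}_{G_j,T_j}(f_{j\#}S)=\rm{Fill}_{G_j,X}(S)$; since rescaling the norm of $G_j$ rescales both sides equally, we may assume the weight is $1$. For $G_j=\mathbb R$, Kantorovich duality identifies $\rm{Fill}_{\mathbb R,X}(S)$ with $\max\{\sum_ia_i\varphi(x_i):\varphi\colon X\to\mathbb R\ 1\text{-Lipschitz}\}$ (``$\le$'' by routing an optimal transport plan along geodesics between the $x_i$, ``$\ge$'' by pairing a filling with $d\varphi$ in the sense of metric currents), and for an optimal potential $\varphi^\ast$ I take $T_j:=[\min_i\varphi^\ast(x_i),\max_i\varphi^\ast(x_i)]$ and $f_j:=\varphi^\ast$, so that pairing a filling of $f_{j\#}S$ in $T_j$ with $\mathrm{id}_{T_j}$ gives $\rm{Fill}_{\mathbb R,T_j}(f_{j\#}S)\ge\sum_ia_i\varphi^\ast(x_i)=\rm{Fill}_{\mathbb R,X}(S)$, while ``$\le$'' is the pushforward bound. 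For $G_j=\mathbb Z$, total unimodularity of the transportation polytope yields an integral optimal plan, whence $\rm{Fill}_{\mathbb Z,X}(S)=\rm{Fill}_{\mathbb R,X}(S)$ (and likewise on an interval), so the same $(f_j,T_j)$ works. For $G_j=\mathbb Z_2$, this is precisely the existence of a calibration for the $\mathbb Z_2$-filling problem realized on a finite geodesic tree, established in \cite{pz}, which I quote.

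To conclude I assemble a single tree: let $T$ be the wedge of $T_1,\dots,T_N$ at chosen basepoints — again a finite geodesic tree — into which each $T_j$ embeds isometrically as a convex subtree, and replace $f_j$ by its composite $X\to T_j\hookrightarrow T$. The nearest-point retraction $T\to T_j$ is $1$-Lipschitz and fixes $T_j$, so pushing a minimal filling forward by it gives $\rm{Fill}_{G_j,T}(f_{j\#}S)=\rm{Fill}_{G_j,T_j}(f_{j\#}S)=\rm{Fill}_{G_j,X}(\pi_jR)$; summing over $j$ and using the splitting in $T$ yields $\rm{Fill}_{G,T}(\sum_jf_{j\#}\pi_jR)=\sum_j\rm{Fill}_{G_j,X}(\pi_jR)=\rm{Fill}_{G,X}(R)$, which is ``$\le$'' in \eqref{filling_eq}, and the maximum is attained because each one-factor optimum is (the Kantorovich potential, resp.\ the calibration of \cite{pz}). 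The duality inputs are all classical or quoted, so the points that will require genuine care are the bookkeeping ones: the mass behaviour of $\pi_{j\#}$ and $\iota_{j\#}$ on $\mathcal R_1$ — an $\ell_1$ computation on $G$-valued orientations — and, above all, the tree-wedge and nearest-point-retraction step, which is what forces a \emph{single} $T$ to serve all $N$ projections at once; I expect this last step to be the main obstacle.
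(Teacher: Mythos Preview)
Your proposal is correct and follows essentially the same architecture as the paper's proof: split along the $\ell_1$-factors, invoke Kantorovich duality for the $\mathbb R$ and $\mathbb Z$ factors and \cite{pz} for the $\mathbb Z_2$ factors, then glue the resulting trees into a single tree. The only differences are cosmetic: you state the $\ell_1$-splitting $\mathrm{Fill}_{G,Z}(Q)=\sum_j\mathrm{Fill}_{G_j,Z}(\pi_jQ)$ up front as a lemma (the paper invokes it more implicitly), you handle $\mathbb Z$ via total unimodularity rather than quoting \cite[Theorem~1.3]{pz}, and for the gluing you use a wedge plus nearest-point retraction, whereas the paper phrases it as a star-shaped tree with the subtrees placed at ``enough distance'' --- your version is arguably cleaner, since the splitting identity in $T$ already follows from the $\ell_1$-structure of $G$ without any distance condition, so the step you flag as the ``main obstacle'' is in fact routine.
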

\noindent As a partial converse to this proposition we point out in Lemma~\ref{converse_lem} that any proper normed Abelian group that can be calibrated with maps into trees, needs to have nonbranching optimal transport plans.
\section{Proof of Theorem~\ref{equivalence_thm}}
\label{sec:eqchar}
\begin{proof}[Proof of Theorem~\ref{equivalence_thm}]
(1)$\Rightarrow$(2): Consider the case where $T = \sum_{i=1}^n g_i\curr{x_i} \in \mathcal R_0(\ell^m_\infty; G)$, where $\partial T = 0$ and $\ell^m_\infty$ is $\mathbb R^m$ equipped with the sup-norm. We also assume without loss of generality that $g_i\neq 0$ and that all the $x_i$ are different. 

\medskip

\noindent \textbf{Claim.} \textit{Let $S \in \mathcal P_1(\ell^m_\infty; G)$ with $\partial S = T$. For any $\epsilon > 0$ there exist $g_{ij} \in G$ for $1 \leq i < j \leq n$, such that $\sum_{i < j} |g_{ij}|d(x_i,x_j) \leq \mathbf M(S) + \epsilon$.}

\medskip

\noindent As $S$ is polyhedral of dimension $1$, it can be associated to an oriented graph. We may write $S = \sum_{e \in E(S)} g_e \curr{e}$, where $g_e \in G$ is such that $g_e \curr{e} \neq 0$, where $V(S)$ are the vertices and $E(S)$ are the oriented edges of a finite oriented graph. Identifying $\ell^m_\infty$ with a subspace of a larger dimensional space $\ell^{\mu}_\infty$ if necessary, we may find a polyhedral chain $S_0 \in \mathcal P_1(\ell^{\mu};G)$ such that:
\begin{enumerate}[(a)]
	\item $S_0 = \varphi_\# S$, where $\varphi : \rm{spt}(S_0) \to \ell^{\mu}_\infty$ is injective and affine on each edge of $E(S)$.
	\item $g_e\curr{\varphi(e)} \neq 0$ for all $e \in E(S)$.
	\item $d(y,\varphi(y)) \leq \epsilon$ for all $y \in \operatorname{spt}(S)$.
	\item $\mathbf M(S_0) \leq \mathbf M(S) + \epsilon$.
	\item If $v_1,v_2,v_3 \in V(S_0)$ are different, then $L(v_1,v_2) \cap L(v_1,v_3) = \{v_1\}$, where $L(v,w)$ is the line through $v$ and $w$.
	\item If $v_1,v_2,v_3,v_4 \in V(S_0)$ are different, then $L(v_1,v_2) \cap L(v_3,v_4) = \emptyset$.
\end{enumerate}
If $\tilde S \in \mathcal P_1(\ell^{\mu};G)$ has some underlying graph such that its vertices satisfy the above conditions (e) and (f), then we say that $\tilde S$ is in \textit{in general position}. We set $B \mathrel{\mathop:}= \rm{spt}(\partial S_0) = \{x_1',\dots,x_n'\}$, where $x_i' \mathrel{\mathop:}= \varphi(x_i)$ for all $i$. Successively $S_{\alpha+1}$ is constructed from $S_\alpha$ in case $V(S_\alpha) \neq B$ in such a way that $\mathbf M(S_{\alpha+1}) \leq \mathbf M(S_{\alpha})$ and $B \subset V(S_{\alpha+1}) \subsetneq V(S_\alpha) \subset V(S_0)$. Since $V(S_0)$ is a finite set, this process stops in a finite number of steps and we end up with a chain that is supported on straight line segments connecting points in $B$. For the iterative step, assume that $S_\alpha$ is already constructed and that $V(S_\alpha) \neq B$. Pick some $v \in V(S_\alpha)\setminus B$ and denote by $\sum_{i = 1}^l a_i \curr{v,v_i}$ the $G$-chain obtained by restricting $S_\alpha$ to the union of the segments that contain $v$. Since $G$ has nonbranching optimal transport plans there are $a_{ij}$, $i,j = 1,\dots,l$, satisfying \eqref{nbp}. Note then that by the general position assumption
\begin{align*}
\mathbf M\biggl(\sum_{i = 1}^l a_i \curr{v,v_i}\biggr) & = \sum_{i = 1}^l |a_i| d(v,v_i) = \sum_{i=1}^l\sum_{j=1}^l |a_{ij}|d(v,v_i) \\
 & = \sum_{1 \leq i < j \leq l} |a_{ij}|(d(v_j,v) + d(v,v_i)) \\
 & \geq \sum_{1 \leq i < j \leq l} |a_{ij}|d(v_j,v_i) \\
 & = \mathbf M\biggl(\sum_{1 \leq i < j \leq l} a_{ij} \curr{v_j,v_i}\biggr) \ .
\end{align*}
Now define
\[
S_{\alpha+1} \mathrel{\mathop:}= S_\alpha-\sum_{i = 1}^l a_i \curr{v,v_i}+\sum_{1\le i < j\le l} a_{ij} \curr{v_j,v_i} \ . 
\]
From \eqref{nbp} it follows that $\partial S_\alpha = \partial S_{\alpha+1}$ and since $V(S_\alpha) \subset V(S_0)$, the general position assumption on $V(S_0)$ implies that $V(S_{\alpha+1}) = V(S_\alpha)\setminus\{v\}$  (for the obvious choice of graph underlying $S_{\alpha+1}$).

\medskip

\noindent At the end of the iterative procedure we obtain a chain $S' = \sum_{i < j} g_{ij} \curr{x_j',x_i'}$ in $\mathcal P_1(\ell^{\mu}_\infty;G)$ with $\partial S' = \sum_{i=1}^n g_i\curr{x_i'}$ and hence $\sum_{j = 1}^n g_{ij} = g_i$. By construction, $\mathbf M(S') \leq \mathbf M(S) + \epsilon$ and $d(x_i',x_i) \leq \epsilon$ for all $i$. Set $D \mathrel{\mathop:}= \inf_{i\neq j} d(x_i,x_j)$ and assume further that $0 < \epsilon < \frac{1}{3}D$. Then $d(x_i',x_j') \geq \frac{1}{3} D$ and hence
\begin{align}
\nonumber
\sum_{1 \leq i < j \leq n} |g_{ij}|d(x_i,x_j) & \leq \sum_{1 \leq i < j \leq n} |g_{ij}|(d(x_i',x_j') + 2\epsilon) \\
\nonumber
 & = \mathbf M(S') + 2\epsilon\sum_{1 \leq i < j \leq n} |g_{ij}|  \\
\nonumber
 & \leq \mathbf M(S') + 6 \epsilon D^{-1} \sum_{1 \leq i < j \leq n} |g_{ij}|D \\
\label{massestimate}
 & \leq \mathbf M(S) + \epsilon(1 + 6 D^{-1} (\mathbf M(S) + \epsilon)) \ .
\end{align}
For $\epsilon>0$ small enough, our claim directly follows.

\medskip

\noindent Now we extend the claim to a general geodesic space $X$. Let $T \mathrel{\mathop:}= \sum_{i=1}^n g_i\curr{x_i} \in \mathcal R_0(X; G)$ for a general geodesic space $X$ and consider $S \in \mathcal R_1(X; G)$ such that $\partial S = T$. Again we can assume that $g_i \neq 0$ for all $1\le i\le n$ and that $x_i \neq x_j$ for different $i$ and $j$. Let $f : \operatorname{spt}(T) \to \ell^m_\infty$ be an isometric embedding (we could take $m = n$ for example). There exists a $1$-Lipschitz extension $\bar f : X \to \ell^m_\infty$ (this can be seen for example by applying the McShane-Whitney Lipschitz extension theorem to each coordinate function) and hence $\mathbf M(\bar f_\# S) \leq \mathbf M(S)$. Taking a $1$-Lipschitz projection of $\bar f_\# S$ onto a bounded set without changing the boundary $f_\# T$, we can assume that $\bar f_\# S$ has compact support. This allows to apply the polyhedral approximation result \cite[Theorem~4.2(D)]{depauw}, due to which, for any fixed $\epsilon>0$ we can find a polyhedral chain $P_\epsilon\in \mathcal P_1(\ell^m_\infty;G)$ with $\partial P_\epsilon = f_\# T$ and $\mathbf M(P_\epsilon) \leq \mathbf M(S) + \epsilon$. Thus by applying the claim to $P_\epsilon$ we find $g_{ij}$ as in \eqref{massestimate} such that
\begin{align}
\nonumber
\mathbf{M} \biggl(\sum_{1 \leq i < j \leq n} g_{ij}\curr{x_i,x_j}\biggr) & \leq \sum_{1 \leq i < j \leq n} \mathbf M(g_{ij}\curr{x_i,x_j}) \\
\nonumber
 & = \sum_{1 \leq i < j \leq n} \mathbf |g_{ij}|d(x_i,x_j) \\
\label{finaleq}
 & \leq \mathbf M(S) + \epsilon(2 + 6 D^{-1} (\mathbf M(S) + 2\epsilon)) \ .
\end{align}
We are now ready to conclude the proof of our first implication. $G$ has optimal transport plans by assumption and thus the infimum in \eqref{optplans} is achieved by some coefficients $g'_{ij} \in G$. The chain $R \mathrel{\mathop:}= \sum_{i < j} g'_{ij}\curr{x_j,x_i} \in \mathcal R_1(X;G)$ satisfies $\mathbf M(R) \leq \sum_{i < j} \mathbf |g'_{ij}|d(x_i,x_j)$. If we assume by contradiction that $\rm{Fill}_{G,X}(T) < \mathbf M(R)$, then by picking a suitable $S \in \mathcal R_1(X;G)$ and $\epsilon>0$, it follows from \eqref{finaleq} that there exist some $g_{ij} \in G$ such that
\[
\sum_{1 \leq i < j \leq n} \mathbf |g_{ij}|d(x_i,x_j) < \mathbf M(S) + \epsilon < \mathbf M(R) \leq \sum_{1 \leq i < j \leq n} \mathbf |g'_{ij}|d(x_i,x_j) \ ,
\]
contradicting the minimality of $g'_{ij}$.

\medskip

\noindent (2)$\Rightarrow$(1): Given an $n$-point metric space $(\{x_1,\dots,x_n\},d)$ and $g_1,\dots,g_n \in G$, let $X$ be the complete graph on the vertices $\{x_1,\dots,x_n\}$ equipped with the geodesic metric $d_X$ that agrees with $d$ on the vertex set. As in the discussion following Definition~\ref{fillingsongeodesics}, for each admissible solutions of \eqref{optplans} we can construct a minimal filling of $T = \sum_{i=1}^n g_i \curr{x_i} \in \mathcal R_0(X;G)$ and vice versa. Since $\mathbf M(S) = \sum_{i < j} |g_{ij}|d_X(x_j,x_i)$ for a filling $S = \sum_{i < j} g_{ij}\curr{x_j,x_i}$ of $T$, a mass minimal filling of $T$ is also a minimizer of \eqref{optplans}. Hence $G$ has optimal transport plans.

\medskip

\noindent Next we show that $G$ has nonbranching optimal transport plans. Let $g_1,\dots,g_n$ be elements in $G$ with $g_1+\dots,g_n=0$. Consider the infinite geodesic metric graph $(X,d)$ on the vertex set $V \mathrel{\mathop:}= \{x_1,\dots,x_n\} \sqcup \{c_1,c_2,\dots\}$ and with edges $E \mathrel{\mathop:}= \{\{x_i,x_j\}, \{x_i,c_k\}\ |\ i \neq j, k \in \mathbb N \}$. The length of the edges is given by $d(x_i,x_j) = 2$ if $i \neq j$ and $d(x_i,c_k) = 1 + \frac{1}{k}$. Consider the chain
\[
T \mathrel{\mathop:}= \sum_{i=1}^n g_i \curr {x_i} \in \mathcal R_0(X;G) \ .
\]
By Definition~\ref{fillingsongeodesics} and the discussion thereafter, there exist $g_{ij} \in G$ with $g_{ij} = -g_{ji}$, $g_{ii} = 0$, $g_i=\sum_{j=1}^ng_{ij}$ and
\begin{equation}
\label{upperbound}
\sum_{1 \leq i < j \leq n} |g_{ij}| d(x_i,x_j) = \mathbf M(S) \leq \mathbf M(C) \ ,
\end{equation}
where $S \mathrel{\mathop:}= \sum_{i < j} g_{ij} \curr{x_j,x_i}$ and $C \in \mathcal R_1(X;G)$ is an arbitrary filling of $T$. For each $k \in \mathbb N$ let $C_k \in \mathcal R_1(X;G)$ be the chain given by
\[
C_k \mathrel{\mathop:}= \sum_{i=1}^n g_i \curr{c_k,x_i} \ ,
\]
which obviously satisfies $\partial C_k = T$. We set $M \mathrel{\mathop:}= \sum_{i=1}^n |g_i|$. With the definition of $d$, \eqref{upperbound} and the triangle inequality we obtain for all $k \in \mathbb N$,
\begin{align*}
\sum_{i, j =1}^n |g_{ij}| & = \sum_{1\leq i < j\leq n} |g_{ij}|d(x_i,x_j) \leq \mathbf M(C_k) = \sum_{i=1}^n |g_i|(1 + \tfrac{1}{k}) \\
 & = \tfrac{1}{k}M + \sum_{i=1}^n |g_i| \ .
\end{align*}
Hence for all $i$,
\begin{equation}\label{mainbd}
\sum_{j=1}^n |g_{ij}| \leq |g_i| + \tfrac{1}{k}M \ .
\end{equation}
To justify this, note that if $A_i \leq B_i$ for all $i=1,\dots,n$ and $\sum_{i=1}^n B_i \leq \epsilon + \sum_{i=1}^n A_i$, then $B_i \leq \epsilon + A_i$ for all $i$. In the above situation we apply this for $\epsilon = \frac{1}{k}M$, $A_i = |g_i|$ and $B_i = \sum_j|g_{ij}|$. Since \eqref{mainbd} holds for all $k$, we obtain that $G$ has nonbranching optimal transport plans.
\end{proof}

\section{Proof of Theorem~\ref{class_otp}}
\subsection{Product lemma}
We first state the product lemma that was mentioned in the introduction.
\begin{lemma}
\label{directsum_lem}
Let $(A,|\cdot|_A)$ and $(B,|\cdot|_B)$ be two normed Abelian groups that have nonbranching optimal transport plans. Then the direct sum $(A \times B,|\cdot|)$ with norm given by $|(a,b)| \mathrel{\mathop:}= \lambda|a|_A + \mu|b|_B$, where $\lambda,\mu > 0$ are arbitrary, also have nonbranching optimal transport plans.
\end{lemma}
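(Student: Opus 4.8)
The plan is to verify directly the two requirements that together make up Definition~\ref{nbplans}: first that $A \times B$ has optimal transport plans (the infimum in \eqref{optplans} is attained), and then that the decomposition condition \eqref{nbp} is satisfiable. Throughout I would fix points $x_1,\dots,x_n$ in an $n$-point metric space and elements $g_i = (a_i,b_i) \in A \times B$ with $\sum_{i=1}^n g_i = 0$, which is the same as $\sum_{i=1}^n a_i = 0$ in $A$ together with $\sum_{i=1}^n b_i = 0$ in $B$. The whole point is that both the admissible set and the norm of $A\times B$ split as ``$A$-part $\oplus$ $B$-part''.

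For the first requirement I would observe that, writing a candidate decomposition as $g_{ij} = (a_{ij},b_{ij})$, the constraints $g_{ij} = -g_{ji}$, $g_{ii}=0$ and $g_i = \sum_{j} g_{ij}$ hold if and only if the analogous constraints hold separately for $(a_{ij})$ against the data $(a_i)$ and for $(b_{ij})$ against the data $(b_i)$. Moreover the cost is additively separated,
\[
\sum_{1\le i<j\le n} |g_{ij}|\, d_X(x_i,x_j) = \lambda\!\!\sum_{1\le i<j\le n}\!\! |a_{ij}|_A\, d_X(x_i,x_j) \;+\; \mu\!\!\sum_{1\le i<j\le n}\!\! |b_{ij}|_B\, d_X(x_i,x_j) \ .
\]
Hence the feasible set is a product and the objective is a sum of a term depending only on $(a_{ij})$ and a term depending only on $(b_{ij})$, so the infimum in \eqref{optplans} equals $\lambda$ times the optimal transport cost of $\sum_i a_i\curr{x_i}$ in $A$ plus $\mu$ times that of $\sum_i b_i\curr{x_i}$ in $B$. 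Since $A$ and $B$ have optimal transport plans, each of these two infima is attained, say by $(a_{ij}^*)$ and $(b_{ij}^*)$, and then $(a_{ij}^*,b_{ij}^*)$ attains the infimum for $A\times B$.

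For the second requirement I would invoke the nonbranching property of $A$ to obtain $a_{ij}\in A$ satisfying \eqref{nbp} for the data $(a_i)$, and that of $B$ to obtain $b_{ij}\in B$ satisfying \eqref{nbp} for $(b_i)$, and set $g_{ij} \defl (a_{ij},b_{ij})$. The first three lines of \eqref{nbp} for $(g_{ij})$ follow immediately from the corresponding componentwise statements (in particular $g_{ii}=(0,0)=0$). For the mass-saturation line, I would compute
\[
|g_i| = \lambda|a_i|_A + \mu|b_i|_B = \lambda\sum_{j=1}^n |a_{ij}|_A + \mu\sum_{j=1}^n |b_{ij}|_B = \sum_{j=1}^n\bigl(\lambda|a_{ij}|_A + \mu|b_{ij}|_B\bigr) = \sum_{j=1}^n |g_{ij}| \ ,
\]
which is exactly the last condition of \eqref{nbp}, completing the verification.

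There is no genuine obstacle here: everything decouples because the $\ell_1$-type norm on $A\times B$ is linear in the pair $(|a|_A,|b|_B)$ and the admissibility constraints of \eqref{optplans} are coordinatewise. The only point meriting a moment of care is that being nonbranching presupposes having optimal transport plans, so the short first step should not be skipped. The same argument shows, by induction, that any finite $\ell_1$-type product $\prod_h(G_h,\lambda_h|\cdot|_{G_h})$ of normed Abelian groups with nonbranching optimal transport plans again has nonbranching optimal transport plans, which is the form in which this lemma feeds into the classification of Theorem~\ref{class_otp}.
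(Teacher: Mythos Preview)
Your proof is correct and follows essentially the same approach as the paper: both verify \eqref{nbp} by pairing componentwise decompositions $(a_{ij},b_{ij})$ and observe that the existence of optimal transport plans for $A\times B$ reduces to the separate problems in $A$ and $B$ via the additive splitting of the cost. Your write-up is somewhat more detailed on the first step (the paper dispatches it with ``a similar argument''), but the content is the same.
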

\begin{proof}
Let $(a_1,b_1),\dots,(a_n,b_n) \in A \times B$ be a finite collection with $\sum_i (a_i,b_i) = (0_A,0_B)$. By assumption there are $g_{ij} \in A$ and $h_{ij} \in B$ for $i,j \in \{1,\dots,n\}$ that satisfy \eqref{nbp}. Defining the norm as stated, the collection $(g_{ij},h_{ij}) \in A\times B$, $i,j \in \{1,\dots,n\}$, is easily seen to satisfy \eqref{nbp} for the data $(a_i,b_i)$, $i \in \{1,\dots,n\}$.

\medskip

\noindent With a similar argument we also obtain that $A\times B$ has optimal transport plans as in Definition~\ref{optitransp} if and only if both $A$ and $B$ have optimal transport plans. Hence $A \times B$ has nonbranching optimal transport plans since both $A$ and $B$ have.
\end{proof}

\subsection{Finitely generated groups}

An element $g$ in a normed Abelian group $G$ is called \textbf{indecomposable} if whenever $|h| + |g - h| = |g|$ for some $h \in G$, then $h = 0$ or $h=g$. Respectively, for all $h \in G \setminus \{0,g\}$ the inequality $|h| + |g-h| > |g|$ holds. Note that $0$ is indecomposable by this definition.

\medskip

\noindent For $h,g \in (G,|\cdot|)$ we write $h \perp \langle g\rangle$ if for all $n \in \mathbb Z$ the identity $|ng + h| = |ng| + |h|$ holds.
\begin{lemma}
\label{indecomplem}
If $G$ has nonbranching optimal transport plans and $g \in G \setminus \{0\}$ is indecomposable, then $2g = 0$ or
\[
n|g|=|ng| \ ,
\]
for all $n \in \mathbb N$. Moreover:
\begin{enumerate}
	\item If $|h| + |ng-h| = |ng|$ for some $n \in \mathbb Z$, then $h$ is a multiple of $g$.
	\item For all $h \in G$ there exists some $n \in \mathbb Z$ such that $|h - ng| = \inf_{m \in \mathbb Z}|h-mg|$.
	\item This minimizer $n$ in $(2)$ is unique if $\langle g \rangle = \mathbb Z$ and unique modulo $2$ if $\langle g \rangle = \mathbb Z_2$. Let us denote it by $n(h,g)$.
	\item $h - n(h,g)g \perp \langle g \rangle$.
\end{enumerate}
\end{lemma}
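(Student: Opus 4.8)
The plan is to first establish the key quantitative statement $n|g| = |ng|$ (when $2g \neq 0$), and then deduce (1)--(4) from it together with the indecomposability of $g$. The engine behind everything is the nonbranching optimal transport property applied to carefully chosen small collections of group elements.

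Let me sketch the steps. First I would prove that if $2g \neq 0$ then $n|g| = |ng|$ for all $n \in \N$. The inequality $|ng| \le n|g|$ is just the triangle inequality, so the content is $|ng| \ge n|g|$; equivalently $|ng| + |g| \ge |(n+1)g|$ reversed, i.e. it suffices by induction to show $|(n+1)g| = |ng| + |g|$, which in turn follows if we show $ng \perp \langle g \rangle$ inductively, or more directly: consider the zero-mean collection consisting of the elements $-(n+1)g$, and $g$ repeated $n+1$ times, i.e. take $g_1 = -(n+1)g$, $g_2 = \cdots = g_{n+2} = g$. Apply \eqref{nbp} to get coefficients $g_{ij}$ with $|g_i| = \sum_j |g_{ij}|$ and $g_i = \sum_j g_{ij}$. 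The transported pieces $g_{1j}$ for $j \ge 2$ satisfy $\sum_{j \ge 2} g_{1j} = -(n+1)g$ and $\sum_{j\ge 2}|g_{1j}| = |(n+1)g|$; simultaneously for each fixed $j \ge 2$ the single element $g$ at node $j$ is decomposed, and $|g| = |g_{j1}| + \sum_{k \neq 1, j}|g_{jk}|$. Here indecomposability of $g$ forces, for each such $j$, that $g_{j1} \in \{0, g\}$ — because $g_{j1}$ and $g - g_{j1} = \sum_{k \neq 1,j} g_{jk}$ realize a decomposition $|g_{j1}| + |g - g_{j1}| = |g|$ only if we first note $|g - g_{j1}| \le \sum_{k\neq 1,j}|g_{jk}|$ and the reverse via the constraint; one has to be slightly careful and iterate the indecomposability across the $j$'s, but morally the mass flowing from each source $g$ toward the sink either goes entirely or not at all, so $(n+1)g$ is written as a sum of at most $n+1$ copies of $g$ with total norm $|(n+1)g|$, whence $|(n+1)g| = m|g|$ for some $m \le n+1$; the case $m < n+1$ is excluded by looking at the leftover mass and using $2g \neq 0$ (the torsion-$2$ obstruction is exactly where this can fail, giving the $\mathbb{Z}_2$ alternative). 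This is the step I expect to be the main obstacle: making the indecomposability argument rigorous when several sources feed one sink requires an induction or a minimal-counterexample argument rather than a one-line deduction.

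Granting $n|g| = |ng|$, item (1) follows similarly: if $|h| + |ng - h| = |ng|$ for some $n \in \Z$, set up the zero-mean triple $\{h, ng - h, -ng\}$ — wait, that is automatically collinear; instead feed this into \eqref{nbp} for the collection $h$, $ng - h$, and $-g$ repeated $|n|$ times (signs adjusted), and read off from the nonbranching decomposition, using indecomposability of $g$ at each of the $\pm g$ nodes, that both $h$ and $ng-h$ are integer combinations of $g$; since $\langle g\rangle$ is cyclic this makes $h$ a multiple of $g$. For (2), existence of a minimizer of $m \mapsto |h - mg|$: if $\langle g\rangle = \Z$ then $|h - mg| \ge |mg| - |h| = |m|\,|g| - |h| \to \infty$, so the infimum is attained over a finite range; if $\langle g\rangle = \Z_2$ there are only two values of $h - mg$, so it is trivial. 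Item (3): if $\langle g\rangle = \Z$ and $n_1 < n_2$ both minimize, then writing $\delta = (n_2 - n_1)g \neq 0$ one gets $|h - n_1 g| = |h - n_1 g - \delta| $ and by the triangle inequality and minimality $|h - n_1 g - tg|$ is constant for $0 \le t \le n_2 - n_1$; applying (1) to an appropriate translate forces $h - n_1 g$ to be a multiple of $g$, say $h = kg$, and then $|kg - mg| = |k - m|\,|g|$ is strictly monotone away from its minimum $m = k$, contradiction; the $\Z_2$ uniqueness-mod-$2$ statement is immediate from there being two elements. Finally (4): let $n = n(h,g)$ and $k = h - ng$; for any $m \in \Z$, $|k| \le |k + mg|$ by the minimality defining $n(h,g)$ (since $k + mg = h - (n-m)g$), so in particular $|k + mg| \ge |k|$; combined with $|k + mg| \le |k| + |mg| = |k| + |m|\,|g|$ we need the matching lower bound $|k + mg| \ge |k| + |m|\,|g|$. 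This last inequality is where one invokes (1) once more: if it failed for some $m$, then $|k| + |mg| > |k+mg|$ would exhibit $k = (k + mg) + (-mg)$ as a nontrivial splitting witnessing $|a| + |b| = |k+mg|$ with... — more cleanly, apply \eqref{nbp} to $\{k, mg, -(k+mg)\}$ expanded by replacing $mg$ and $-(k+mg)$ by $g$-multiples split into single copies, and use indecomposability plus the already-established $n|g| = |ng|$ to conclude the flow decomposes so that $|k + mg| = |k| + |mg|$. I would present (4) as the cleanest consequence and reuse the machinery from (1).

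One structural remark: throughout, the only two "shapes" of argument needed are (a) the norm-additivity reading of \eqref{nbp} at a single node combined with indecomposability, and (b) the coercivity/monotonicity of $m \mapsto |mg|$ coming from $n|g| = |ng|$. It is worth isolating (a) as an internal sub-claim at the start of the proof: \emph{if $g$ is indecomposable and $g = \sum_{k} h_k$ with $\sum_k |h_k| = |g|$, then all but one $h_k$ vanish.} This follows by induction on the number of nonzero terms from the definition of indecomposable (group a single $h_k$ against the sum of the rest), and once it is available, the node-by-node analysis in each of the steps above becomes a short formal deduction rather than a repeated ad hoc computation.
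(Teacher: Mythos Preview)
Your overall engine is the same as the paper's: in each part you apply \eqref{nbp} to a carefully chosen collection containing many copies of $g$, and then invoke indecomposability at each of those nodes to force all the mass from that node to go to a single other node. The paper does exactly this for the first statement and for each of (1)--(4). Your isolated sub-claim (if $g=\sum_k h_k$ with $\sum_k |h_k|=|g|$ then all but one $h_k$ vanish) is implicitly used throughout the paper's proof and is worth stating explicitly; and your argument for (2) via the coercivity $|h-mg|\ge |m|\,|g|-|h|\to\infty$ is actually cleaner than the paper's route, which proves the stronger discreteness statement that any two values of $m\mapsto|h-mg|$ differ by a multiple of $|g|$.

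There is, however, a genuine gap in your treatment of (3). Neither of the two steps you sketch is justified: the ``constancy'' claim $|h-n_1g-tg|=\text{const}$ for $0\le t\le n_2-n_1$ does not follow from the triangle inequality and minimality alone (there is no convexity of $m\mapsto|h-mg|$ available in a general normed group), and the attempted application of (1) fails because with $a=h-n_1g$ and $\delta=(n_2-n_1)g$ you only get $|a|+|\delta-a|=2|a|$, which has no reason to equal $|\delta|$. The paper instead handles (3) by yet another direct \eqref{nbp} application: take $g_1=\dots=g_{m-n}=g$, $g_{m-n+1}=ng-h$, $g_{m-n+2}=h-mg$, read off $ng-h=-kg+f$ with $|ng-h|=k|g|+|f|$, and observe that $k>0$ would contradict minimality of $|ng-h|$ since $|(n+k)g-h|=|f|<|ng-h|$. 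Alternatively, note that your proof of (4) uses only the \emph{existence} of a minimizer from (2), not its uniqueness; once (4) is established for any minimizer $n$, the identity $|h-(n+m)g|=|h-ng|+|m|\,|g|$ immediately gives uniqueness, so you could simply swap the order of (3) and (4).
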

\begin{proof}
For some $n \geq 2$ consider the points $g_1 = \dots = g_n = g$ and $g_{n+1} = -ng$. By assumption there are $g_{i,j}$ for $i,j=1,\dots,n+1$ with the property:
\begin{equation*}
\left\{
\begin{array}{ll}
g_{i,j}=-g_{j,i} & \text{ for all }i,j=0,\ldots,n+1,\\
g_{i,i}=0 &\text{ for all }i=1,\ldots,n+1,\\ 
g_i=\sum_{j=1}^{n+1}g_{i,j} &\text{ for all }i=1,\ldots,n+1,\\ 
|g_i| = \sum_{j=1}^{n+1} |g_{i,j}| &\text{ for all }i=1,\ldots,n+1.
\end{array}
\right.
\end{equation*}
Because $g_i$ is indecomposable for all $i \in \{1,\dots,n\}$ there is exactly one $j \in \{1,\dots,n+1\} \setminus \{i\}$ with $g_{i,j} \neq 0$. Hence $g_{i,j} = g_i = g$ and this forces $j = n+1$. Otherwise, $j \leq n$ would imply $g = g_j = g_{j,i} = -g_{i,j} = -g$ contradicting that $2g \neq 0$. Thus
\[
|ng| = |g_{n+1}| = \sum_{j=1}^{n} |g_{n+1,j}| = \sum_{j=1}^{n} |-g| = n|g| \ ,
\]
and the result follows.

\medskip

\noindent (1): Next we show that if $|h| + |ng-h| = |ng|$, then $h$ is a multiple of $g$. The statement is clear if $n=1$ since $g$ is indecomposable. So we can assume that $2g\neq 0$ and hence $n|g| = |ng|$ for $n \geq 1$ by the first part. Let us write $h+h' = ng$ with $|h|+|h'|=|ng|$ and set $g_1 = \dots = g_n = g$, $g_{n+1} = -h$ and $g_{n+2} = -h'$. Then there are corresponding $g_{i,j}\in G$ as in \eqref{nbp}. The indecomposability of $g$ implies that for all $i=1,\dots, n$ there is exactly one $j$ with $g_{i,j}$ nonzero and equal $g$. Moreover, this $j$ is equal $n+1$ or $n+2$. Up to a reordering we have $g_{1,n+1} = \dots = g_{k,n+1} = g$, $g_{k+1,n+2} = \dots = g_{n,n+2} = g$ and $f = g_{n+1,n+2}$. It follows that $-h = -kg + f$ with $|h| = k|g| + |f|$ and similarly $-h' = (k-n)g - f$ with $|h'| = (n-k)|g| + |f|$. Hence
\[
n|g| = |ng| = |h|+|h'| = k|g| + |f| + (n-k)|g| + |f| = n|g| + 2|f| \ .
\]
This implies that $f = 0$ an therefore both $h$ and $h'$ are multiples of $g$.

\medskip

\noindent (2): Let $h \in G$ and assume first that $2g = 0$. Set $g_1 = g, g_2 = -h, g_3 = g+h$. \eqref{nbp} and the indecomposability of $g$ imply that $|g+h| = |h| + |g|$ or $|h| = |g| + |g+h|$. In this case it is also clear that (3) holds.

\medskip

\noindent Next assume that $n|g| = |ng|$ for all $n \in \mathbb N$. Let $m,n \in \mathbb Z$ and assume that $|ng - h| < |mg - h|$. Clearly, $m \neq n$.
Consider $g_1 = \dots = g_{s} = g$, $g_{s+1} = ng-h$ and $g_{s+2} = h-mg$, where $s \mathrel{\mathop:}= |m-n|$. We fix the corresponding $g_{i,j}\in G$ as in \eqref{nbp}. The indecomposability of $g$ implies that for all $i=1,\dots, s$ there is exactly one $j$ such that $g_{i,j}$ is nonzero and equal $g$. Moreover, this $j$ is equal $s+1$ or $s+2$. Up to a reordering we therefore have $g_{1,s+1} = \dots = g_{k,s+1} = g$, $g_{k+1,s+2} = \dots = g_{s,s+2} = g$ and $f = g_{s+1,s+2}$. It follows that $ng-h = g_{s+1} = -kg + f, |ng-h| = k|g| + |f|$ and similarly $h-mg = (k-s)g - f, |h-mg| = (s-k)|g| + |f|$. Hence
\[
k|g| + |f| = |ng-h| < |h-mg| = (s-k)|g| + |f| \ .
\]
This implies that $|mg - h|-|ng-h|$ is a positive multiple of $|g|$. Hence the values of $n \mapsto |ng-h|$ are discrete in $\mathbb R$ and therefore there exists $n \in \mathbb Z$ such that $|ng + h| = \inf_{m \in \mathbb Z}|mg+h|$.

\medskip

\noindent (3): Again we only have to treat the case where $n|g| = |ng|$ for all $n \in \mathbb N$. Let $h \in G$ and assume by contradiction that $n,m \in \mathbb Z$ are two different minimizers as in (2), i.e.\ $|ng-h| = |mg-h| = \inf_{k \in \mathbb Z}|kg-h|$. We may assume that $m-n > 0$ and consider $g_1 = \dots = g_{m-n} = g$, $g_{m-n+1} = ng-h$ and $g_{m-n+2} = h-mg$. We fix the corresponding $g_{i,j}\in G$ as in \eqref{nbp}. It follows as before that there is some $k \in \{1,\dots,m-n\}$ and $f \in G$ such that $ng-h = -kg + f$ with $|ng-h| = k|g| + |f|$ and similarly $h-mg = (k-m+n)g - f$ with $|h-mg| = (m-n-k)|g| + |f|$. This forces $k = 0$, otherwise
\[
|(n+k)g-h| = |f| < k|g| + |f| = |ng-h| \ ,
\]
contradicting the minimality of $|ng-h|$. Hence $m=n$.

\medskip

\noindent (4): In case $2g = 0$, this part is immediate from (2), so we only have to consider the case $\langle g \rangle = \mathbb Z$. Given some $h \in G$, the element $h' \mathrel{\mathop:}= h - n(h,g)g$ clearly satisfies $n(h',g) = 0$. We want to show that $|ng - h'| = |ng| + |h'|$ for all $n \in \mathbb Z$. This is obvious if $n=0$. Else consider $g_1 = \dots = g_{s} = g$, $g_{s+1} = -h'$ and $g_{s+2} = h'-ng$, where $s\mathrel{\mathop:}= |n|$. We fix the corresponding $g_{i,j}\in G$ as in \eqref{nbp}. Again since $g$ is indecomposable, for all $i=1,\dots,s$ there is exactly one $j$ with $g_{i,j}$ is nonzero and equal $g$. Moreover, this $j$ is $s+1$ or $s+2$. Up to a reordering, there is some $k \in \{1,\dots,s\}$ and $f \in G$ such that $g_{1,s+1} = \dots = g_{k,s+1} = g$, $g_{k+1,s+2} = \dots = g_{s,s+2} = g$ and $f = g_{s+1,s+2}$. This implies that $-h' = -kg + f$ with $|h'| = k|g| + |f|$ and similarly $h'-ng = (k-s)g - f$ with $|h'-ng| = (s-k)|g| + |f|$. Because $n(h',g) = 0$ it must hold that $k = 0$ and therefore $f = -h'$. Otherwise, $|h'-kg| = |f| < |h'|$. Thus
\[
|h'-ng| = (|n|-k)|g| + |f| = |n||g| + |h'| = |ng| + |h'| \ ,
\]
and applied to $h$,
\[
|h - n(h,g)g - ng| = |ng| + |h - n(h,g)g| \ .
\]
The above hold for all $n \in \mathbb Z$ and hence $h - n(h,g)g \perp \langle g \rangle$.
\end{proof}

\noindent Note that the points (1) to (4) of the lemma above are not used for the remaining discussion, but these properties may be useful for a more general classification of normed Abelian groups with nonbranching optimal transport plans.

\medskip

\noindent Next we make a simple observation about the subgroup generated by the indecomposable elements.

\begin{lemma}
\label{indecomplem2}
Let $(G,|\cdot|)$ be a normed Abelian group that has nonbranching optimal transport plans. If $g,h \in G$ are indecomposable and $\langle g \rangle \neq \langle h \rangle$, then
\[
|kg+lh| = |kg| + |lh| \ ,
\]
for all $k,l \in \mathbb Z$ and $\langle g,h\rangle \simeq \langle g\rangle \times \langle h\rangle$.
\end{lemma}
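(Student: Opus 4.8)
The plan is to use Lemma~\ref{indecomplem} to reduce to a few explicit configurations and then feed each of them into the nonbranching property \eqref{nbp}, reading the conclusion off from the constraints $|g_i|=\sum_j|g_{ij}|$. We may assume $g,h\neq0$, the statement being trivial otherwise. By Lemma~\ref{indecomplem}, an indecomposable nonzero element $g$ either satisfies $2g=0$, so that $\langle g\rangle\simeq\mathbb Z_2$, or satisfies $n|g|=|ng|>0$ for every $n\geq1$, so that $g$ has infinite order and $\langle g\rangle\simeq\mathbb Z$; the same holds for $h$. The hypothesis $\langle g\rangle\neq\langle h\rangle$ then excludes $h=\pm g$: if $h$ were a nonzero multiple $ng$ of $g$, indecomposability of $h$ would force $|n|\le1$ (when $\langle g\rangle\simeq\mathbb Z$, since otherwise $g+(n-1)g$ is a nontrivial decomposition of $ng$; and $h=ng\in\{0,g\}$ when $\langle g\rangle\simeq\mathbb Z_2$), so $h\in\{g,-g\}$ and $\langle h\rangle=\langle g\rangle$, a contradiction. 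In particular $h\neq-g$ and $g\neq-h$.

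I would first reduce the asserted identity $|kg+lh|=|kg|+|lh|$ to showing $|pg+qh|=p|g|+q|h|$ for integers $p,q\geq1$, normalized so that $p=1$ whenever $\langle g\rangle\simeq\mathbb Z_2$ and $q=1$ whenever $\langle h\rangle\simeq\mathbb Z_2$. This is routine: the identity is trivial when $k=0$ or $l=0$; when $\langle g\rangle\simeq\mathbb Z$ one replaces $g$ by $\operatorname{sign}(k)g$ and sets $p=|k|$; when $\langle g\rangle\simeq\mathbb Z_2$ the identity is trivial for $k$ even and one keeps $g$ with $p=1$ for $k$ odd; and similarly for $h$. After this normalization $pg=kg$, $qh=lh$ and, using Lemma~\ref{indecomplem} in the $\mathbb Z$ cases and triviality ($p=1$, resp.\ $q=1$) in the $\mathbb Z_2$ cases, $p|g|=|pg|=|kg|$ and $q|h|=|qh|=|lh|$, so the reduction is legitimate.

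The core step is to apply \eqref{nbp} to $g_1=\dots=g_p=g$, $g_{p+1}=\dots=g_{p+q}=h$ and $g_{p+q+1}=-(pg+qh)$, which sum to $0$, obtaining coefficients $g_{ij}$, $1\le i,j\le N:=p+q+1$. Since each $g_i$ with $i\le p+q$ is indecomposable, there is a unique $j=\sigma(i)\neq i$ with $g_{i,\sigma(i)}\neq0$, and then $g_{i,\sigma(i)}=g_i$. I claim $\sigma(i)=N$ for every $i\le p+q$. A $g$-vertex $i\le p$ cannot send its mass to an $h$-vertex $i'$, since then $g_{i'i}=-g\neq0$ would be the unique nonzero coefficient out of $i'$ and hence equal to $g_{i'}=h$, giving the excluded equality $h=-g$; symmetrically no $h$-vertex sends to a $g$-vertex. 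If $\langle g\rangle\simeq\mathbb Z$, a $g$-vertex also cannot send to another $g$-vertex $i'$, for then $g_{i'i}=-g\neq0$ is the unique nonzero coefficient out of $i'$, forcing $g_{i'}=-g$, i.e.\ $2g=0$; hence $\sigma(i)=N$. If $\langle g\rangle\simeq\mathbb Z_2$ then $p=1$ by our normalization, so the single $g$-vertex can target neither a $g$-vertex (there is none) nor an $h$-vertex nor itself, so again $\sigma(1)=N$; the same reasoning applies to the $h$-vertices. Therefore $g_{iN}=g$ for $i\le p$ and $g_{iN}=h$ for $p<i\le p+q$, so $g_{Ni}=-g_{iN}$ equals $-g$ or $-h$ accordingly and $g_{NN}=0$; the constraint $|g_N|=\sum_{i=1}^N|g_{Ni}|$ then reads $|pg+qh|=|g_N|=p|g|+q|h|$, as desired.

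The structural statement follows at once: the homomorphism $\langle g\rangle\times\langle h\rangle\to G$, $(a,b)\mapsto a+b$, has image $\langle g,h\rangle$, and if $a+b=0$ with $a=kg$ and $b=lh$, then $0=|a+b|=|a|+|b|$ by the identity just proved, so $a=b=0$; hence the map is an isomorphism onto $\langle g,h\rangle$. The one genuinely delicate point is the $2$-torsion case: if $\langle g\rangle\simeq\mathbb Z_2$ and one allowed $p\ge2$ copies of $g$, several $g$-vertices could pair off among themselves (with $g_{i'i}=g_{ii'}=g$) instead of routing their mass to the sink $g_N$, and the flow argument would break down. The normalization to $p=1$ (and $q=1$) carried out in the second paragraph is precisely what rules this out, and I expect getting that reduction exactly right to be the main thing to be careful about.
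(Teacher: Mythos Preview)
Your argument is correct and follows essentially the same route as the paper: normalize to nonnegative multiplicities (taking $p=1$ or $q=1$ in the $\mathbb Z_2$ cases), apply \eqref{nbp} to $p$ copies of $g$, $q$ copies of $h$, and the sink $-(pg+qh)$, and use indecomposability to force all mass to route to the sink. Your case analysis justifying $\sigma(i)=N$ (ruling out $g$-to-$h$, $h$-to-$g$, and $g$-to-$g$ edges) is in fact more explicit than the paper, which simply asserts ``this $j$ is equal $k+l+1$'' without elaboration; you have correctly identified the $\mathbb Z_2$ normalization as the point requiring care.
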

\begin{proof}
The statement is clear if $g=0$,$h=0$,$k=0$ or $l=0$, so we assume that this is not the case. From Lemma~\ref{indecomplem} it follows that the groups generated by $g$ and $h$ are isomorphic to $\mathbb Z$ or $\mathbb Z_2$. Clearly, $f$ is indecomposable if and only if $-f$ is indecomposable. So by replacing $g$ or $h$ by its inverse we can assume that $k,l \geq 1$. In case $\langle g\rangle$ or $\langle h\rangle$ is isomorphic to $\mathbb Z_2$ the corresponding integer is assumed to be $1$. Consider $g_1=\dots=g_{k} = g$, $g_{k+1}=\dots=g_{k+l} = h$ and $g_{k+l+1} = -kg-lh$. We fix the corresponding $g_{i,j}\in G$ as in \eqref{nbp}. Since $g$ and $h$ are indecomposable, for all $i=1,\dots, k$ there is exactly one $j$ for which $g_{i,j}$ is nonzero and equal $g$ and similarly for $i=k+1,\dots,k+l$. Moreover, this $j$ is equal $k+l+1$. We therefore have $g_{1,k+l+1} = \dots = g_{k,k+l+1} = g$, $g_{k+1,k+l+2} = \dots = g_{k+l,k+l+1} = h$, which implies that $|kg+lh| = k|g| + l|h|$ and hence $|kg+lh| = |kg| + |lh|$ by the triangle inequality.

\medskip

\noindent From $|kg+lh| = k|g| + l|h|$ it follows that if $kg + lh = 0$, then $k=0$ and $l=0$. Hence $\langle g\rangle \cap \langle h\rangle = \emptyset$ and hence $\langle g,h\rangle \simeq \langle g\rangle \times \langle h\rangle$.
\end{proof}
\noindent For a normed Abelian group $(G,|\cdot|)$ we denote by $I_G \subset G$ a choice of a subset such that for any nonzero indecomposable element $g$ either $g$ or $-g$ is in $I_G$.

\begin{lemma}
\label{indecompcor}
Let $(G,|\cdot|)$ be a normed Abelian group that has nonbranching optimal transport plans. Then
\[
\left(\langle I_G\rangle,|\cdot|\right)\text{ is isometrically isomorphic to }\bigotimes_{g\in I_G}^{\ell_1}\left(\langle g\rangle, |\cdot|\right),
\]
where $\bigotimes^{\ell_1}$ represents the $\ell_1$-direct product.
\end{lemma}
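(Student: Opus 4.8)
The plan is to combine the two-generator statement of Lemma~\ref{indecomplem2} with an induction on finite subsets of $I_G$, and then pass to the full direct product by a direct-limit (or pointwise) argument. First I would fix a finite subset $\{g_1,\dots,g_r\}\subset I_G$ with pairwise distinct cyclic subgroups, and prove by induction on $r$ that $\langle g_1,\dots,g_r\rangle$ is isometrically isomorphic to $\bigotimes_{i=1}^r{}^{\ell_1}\langle g_i\rangle$, i.e.\ that every element $h\in\langle g_1,\dots,g_r\rangle$ has a representation $h=\sum_i k_ig_i$ (with $k_i\in\{0,1\}$ when $\langle g_i\rangle\simeq\mathbb Z_2$) that is unique in the appropriate sense and satisfies $|h|=\sum_i|k_ig_i|=\sum_i|k_i||g_i|$ (using $n|g_i|=|ng_i|$ from Lemma~\ref{indecomplem}). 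The base case $r=1$ is trivial. The inductive step is where the actual work lies: assuming the claim for $r-1$, write $h=h'+k_rg_r$ with $h'\in\langle g_1,\dots,g_{r-1}\rangle$; I want $|h|=|h'|+|k_rg_r|$.

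The key step is to run one more instance of the nonbranching hypothesis \eqref{nbp}, exactly in the style of Lemma~\ref{indecomplem2}, but now with a configuration of sources realizing $h'$ together with copies of $g_r$. Concretely, write $h'=\sum_{i<r}k_ig_i$ as given by induction, and consider the collection consisting of $|k_i|$ copies of $\pm g_i$ for each $i<r$, together with $|k_r|$ copies of $\pm g_r$, and one source equal to $-h$; the sum is zero. Apply \eqref{nbp} to obtain coefficients $g_{s,t}$. Indecomposability of each $g_i$ forces each ``source vertex'' carrying $\pm g_i$ to send its entire mass along a single edge; I then argue, using Lemma~\ref{indecomplem2} (which gives $\langle g_i\rangle\cap\langle g_j\rangle=0$ for $i\neq j$, hence $g_i\neq\pm g_j$) and the $r=2$ case, that no two $g_i$-sources with $i\neq j$ can cancel against each other, so all of them must route to the $-h$ vertex. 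Reading off the mass identity at that vertex yields $|h|\ge\sum_i|k_i||g_i|$, and the triangle inequality gives the reverse inequality; uniqueness of the representation then follows from $|h|=\sum_i|k_i||g_i|$ together with the fact (from Lemma~\ref{indecomplem}(3) and Lemma~\ref{indecomplem2}) that a relation $\sum_ik_ig_i=0$ forces every $k_ig_i=0$. This also gives $\langle g_1,\dots,g_r\rangle\simeq\prod_i\langle g_i\rangle$ as abstract groups.

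Having the finite case, I would pass to the general case as follows. The underlying group $\langle I_G\rangle$ is the union of the subgroups $\langle F\rangle$ over finite $F\subset I_G$, and the algebraic direct product $\bigotimes_{g\in I_G}^{\ell_1}\langle g\rangle$ is by definition the set of finitely supported tuples, which is likewise the union of the finite subproducts. The maps $\langle F\rangle\to\bigotimes_{g\in F}\langle g\rangle$ constructed above are compatible with inclusions $F\subset F'$ (uniqueness of the representation guarantees this), so they glue to a group isomorphism $\langle I_G\rangle\to\bigotimes_{g\in I_G}^{\ell_1}\langle g\rangle$; and since any single element lies in some $\langle F\rangle$, the norm identity $|h|=\sum_{g\in I_G}|k_g||g|$ holds for all $h$, i.e.\ the isomorphism is isometric. (No completeness or limit issue arises here because every element of an $\ell_1$-direct product of cyclic groups is automatically finitely supported.)

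The main obstacle is the inductive step, specifically the combinatorial bookkeeping needed to guarantee that, in the optimal transport plan produced by \eqref{nbp}, all the $g_i$-contributions genuinely flow into the $-h$ vertex rather than partially cancelling among themselves or redistributing through intermediate vertices. The indecomposability of each $g_i$ collapses each source to a single outgoing edge, but one must still rule out a $g_i$-source pairing with a $g_j$-source ($i\neq j$) or with an opposite copy of $g_i$ when $|k_i|>1$; the first is excluded by Lemma~\ref{indecomplem2} ($g_i\neq\pm g_j$ and $|k_ig_i+k_jg_j|=|k_ig_i|+|k_jg_j|$ is what we are proving, so care is needed to invoke only the already-known two-generator case), and the second by choosing the copies of $g_i$ to all have the same sign, consistent with $k_i$. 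Once this routing is pinned down, everything else is the triangle inequality and the additivity $n|g_i|=|ng_i|$ already available from Lemma~\ref{indecomplem}.
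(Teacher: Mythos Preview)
Your proposal is correct and follows the same line as the paper, which simply states that ``the proof of Lemma~\ref{indecomplem2} can easily be generalized to include any finite collection of indecomposable elements.'' Your concrete \eqref{nbp} configuration---$|k_i|$ copies of $\pm g_i$ together with a single $-h$ vertex---is exactly that generalization, and your routing argument (each indecomposable source sends its mass along one edge; that edge cannot land on a $g_j$-source with $j\neq i$ because that would force $g_i=-g_j$, contradicting $\langle g_i\rangle\neq\langle g_j\rangle$; and same-sign copies cannot pair when $2g_i\neq 0$, while only one copy is used when $2g_i=0$) is precisely what is needed. The one comment is that the inductive scaffolding is not actually required: the single \eqref{nbp} application with all $r$ generators already yields $|h|=\sum_i|k_i||g_i|$ directly, so you never use the hypothesis for $r-1$. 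Your direct-limit passage to arbitrary $I_G$ is correct and fills in a step the paper leaves implicit.
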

\begin{proof}
The proof of Lemma~\ref{indecomplem2} can easily be generalized to include any finite collection of indecomposable elements.
\end{proof}

\subsection{Finite dimensional normed spaces}
It follows directly from Lemma~\ref{directsum_lem} that $(\mathbb R^n,\|\cdot\|_1)$ has nonbranching optimal transport plans. In this subsection we want to show the converse.

\medskip

\noindent An extreme point in a convex set $C \subset X$ in some Banach space $X$ is a point $p \in C$ that can't be written as $\lambda p_1 + (1-\lambda) p_2$, where $\lambda \in (0,1)$ and $p_1,p_2 \in C \setminus \{p\}$.
\begin{lemma}
\label{extreme_lem}
Let $p$ be an extreme point of the closed unit ball $\mathbf B_X(0,1)$ in some normed space $(X,\|\cdot\|)$. Then the equations $p = p_1 + p_2$ and $\|p\| = \|p_1\| + \|p_2\|$ can only hold if both $p_1$ and $p_2$ are multiples of $p$.
\end{lemma}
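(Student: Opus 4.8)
The plan is to reduce the statement to a convexity fact about the extreme point $p$. Assume $p = p_1 + p_2$ with $\|p\| = \|p_1\| + \|p_2\|$, and suppose neither $p_1$ nor $p_2$ is a multiple of $p$; we will derive a contradiction with the extremality of $p$. Since $p$ lies on the unit sphere we have $\|p\| = 1$, so $\|p_1\| + \|p_2\| = 1$. If one of $p_1, p_2$ is zero the conclusion is trivial, so assume both are nonzero and write $t \defl \|p_1\| \in (0,1)$, hence $\|p_2\| = 1-t$. Set $q_1 \defl p_1/\|p_1\|$ and $q_2 \defl p_2/\|p_2\|$, two points of the unit sphere, and observe that $p = t q_1 + (1-t) q_2$ exhibits $p$ as a proper convex combination of the unit-ball points $q_1, q_2$.

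By the definition of an extreme point, this forces $q_1 = p$ or $q_2 = p$ (whenever $q_1 \ne q_2$; and if $q_1 = q_2$ then both equal $p$ directly, since $p = q_1 = q_2$). Say $q_1 = p$, i.e. $p_1 = \|p_1\|\,p = t p$, which is a multiple of $p$. Then $p_2 = p - p_1 = (1-t)p$ is also a multiple of $p$, and we are done. The symmetric case $q_2 = p$ is identical. This contradicts the assumption that not both $p_i$ are multiples of $p$, completing the argument.

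The only subtlety I anticipate is the edge case analysis: I must handle separately the possibilities that $p_1 = 0$, that $p_2 = 0$, and that $q_1 = q_2$ (so that the convex combination is degenerate and the extreme-point hypothesis does not apply in its stated form with $p_1, p_2 \in C \setminus \{p\}$). Each of these is immediate — if $p_1 = 0$ then $p_2 = p$ and both are trivially multiples of $p$; if $q_1 = q_2$ then $p = q_1$ lies on the sphere and again $p_1 = tp$, $p_2 = (1-t)p$ — but they should be dispatched explicitly so that the appeal to extremality is applied only when $q_1, q_2$ are two distinct points of the unit ball different from $p$. Beyond that, the proof is a one-line convexity observation, and no results from earlier in the paper are needed.
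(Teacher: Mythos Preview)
Your proof is correct and follows essentially the same route as the paper: normalize, write $p$ as the convex combination $\|p_1\|\cdot \tfrac{p_1}{\|p_1\|} + \|p_2\|\cdot \tfrac{p_2}{\|p_2\|}$, and invoke extremality to force $p_i/\|p_i\| = p$. Your treatment of the degenerate cases ($p_i = 0$ and $q_1 = q_2$) is in fact slightly more explicit than the paper's; the unnecessary contradiction framing could be dropped, but the argument is sound.
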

\begin{proof}
As an extreme point, $p$ has unit norm. The conclusion of the lemma is clear if $\|p_1\|=0$ or $\|p_2\|=0$. If this is not the case, then $p = \lambda \frac{p_1}{\|p_1\|} + (1-\lambda) \frac{p_2}{\|p_2\|}$, where $\lambda = \|p_1\| = \|p\|-\|p_2\| = 1 - \|p_2\|$. By assumption $\lambda \neq 0$ and therefore $\frac{p_1}{\|p_1\|} = p$ or $\frac{p_2}{\|p_2\|} = p$ since $p$ is extreme. But then 
the equation $p = \lambda \frac{p_1}{\|p_1\|} + (1-\lambda) \frac{p_2}{\|p_2\|}$ implies that $\frac{p_1}{\|p_1\|} = \frac{p_2}{\|p_2\|} = p$.
\end{proof}
\begin{lemma}
\label{l1decomp_lem1}
Assume that the normed space $(X,\|\cdot\|)$ has nonbranching optimal transport plans. If $p_1,\dots,p_n \in \mathbf B_X(0,1)$ are linearly independent extreme points and $\lambda_1,\dots,\lambda_n \in \mathbb R \setminus \{0\}$, then
\[
\biggl\|\sum_{i=1}^n \lambda_i p_i \biggr\| = \sum_{i=1}^n |\lambda_i| \|p_i\| \ .
\]
\end{lemma}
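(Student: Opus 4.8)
The plan is to reduce the statement to the indecomposable case already handled in Lemma~\ref{indecomplem2} (and its corollary Lemma~\ref{indecompcor}) by showing that extreme points of the unit ball behave exactly like indecomposable elements. First I would observe, using Lemma~\ref{extreme_lem}, that each extreme point $p_i$ is ``indecomposable'' in the sense relevant here: if $\|h\| + \|p_i - h\| = \|p_i\|$ then both $h$ and $p_i - h$ are multiples of $p_i$. (Note this is slightly weaker than the group-theoretic notion of indecomposable used earlier, since $h$ need not be $0$ or $p_i$; but it is exactly what the proof of Lemma~\ref{indecomplem2} uses.) Then I would run the same argument as in Lemma~\ref{indecomplem2}: up to replacing $p_i$ by $-p_i$ and absorbing signs, assume $\lambda_i > 0$, pick a large integer $N$, set up a configuration of points $g_1,\dots,g_m$ consisting of suitable rational multiples of the $p_i$'s together with one closing point $-\sum_i \lambda_i p_i$ (or, to stay within a group, scale everything so the coefficients that appear are integers times a fixed small vector), invoke \eqref{nbp}, and use the decomposition property of extreme points to conclude that in the resulting transport plan the mass from the ``copies'' of $p_i$ can only flow toward the closing vertex.

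The technical wrinkle compared to the finitely generated case is that $X$ is a vector space, not finitely generated, so I cannot literally take $g_1 = \dots = g_k = p_i$ as distinct indices unless the coefficients are genuinely integers; instead I would work with a common ``unit'' by approximating the reals $\lambda_i$ and carrying out the argument with $g_j$ equal to small rational multiples $\tfrac{1}{N} p_i$ of the extreme points, taking $N$ copies of each. The indecomposability-type property of $p_i$ passes to $\tfrac1N p_i$ (since $\tfrac1N p_i$ is still a positive multiple of $p_i$ and Lemma~\ref{extreme_lem} is scale-invariant), so each such small vector still forces all but one of its $g_{ij}$ to vanish. Linear independence of $p_1,\dots,p_n$ is what prevents cancellation: a relation $\sum_i \mu_i p_i = 0$ with not all $\mu_i$ zero is impossible, so mass originating from $p_i$-copies cannot be routed through $p_j$-copies for $j \neq i$, exactly as $\langle g\rangle \cap \langle h\rangle = \{0\}$ was used before. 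This yields $\bigl\|\sum_i \lambda_i p_i\bigr\| = \sum_i |\lambda_i|\,\|p_i\|$ directly, with the reverse inequality being the triangle inequality.

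The main obstacle I anticipate is bookkeeping the passage from the clean integer-coefficient setup of Lemma~\ref{indecomplem2} to real coefficients $\lambda_i$: one must either argue that it suffices to prove the identity for rational $\lambda_i$ and pass to the limit by continuity of the norm, or set up the $G$-chain configuration so that after clearing denominators all coefficients are integer multiples of finitely many fixed extreme-point vectors. I would take the former route — prove it first for $\lambda_i \in \mathbb Q$ by the combinatorial argument above, then extend to all $\lambda_i \in \mathbb R \setminus \{0\}$ using that both sides of the claimed equality are continuous in $(\lambda_1,\dots,\lambda_n)$ and that the set where all $\lambda_i \neq 0$ is open. A secondary point to be careful about: when I reorder and split the indices into those flowing toward $p_i$-type closing vertices, I should use one closing vertex $-\sum_i\lambda_i p_i$ rather than $n$ separate ones, so the correct generalization is the multi-element version of Lemma~\ref{indecomplem2} already asserted in Lemma~\ref{indecompcor}; invoking that directly (with the extreme points playing the role of the indecomposables) may let me shorten the argument to essentially one paragraph.
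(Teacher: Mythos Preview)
Your proposal would ultimately work, but it routes around a difficulty that does not exist, and the paper's proof is accordingly much shorter. The paper simply takes the $n+1$ elements $g_1=\lambda_1 p_1,\dots,g_n=\lambda_n p_n$, $g_{n+1}=-\sum_i\lambda_i p_i$ and applies \eqref{nbp} once. For each $i\le n$, the norm-additive decomposition $g_i=\sum_j g_{ij}$ with $\|g_i\|=\sum_j\|g_{ij}\|$, together with Lemma~\ref{extreme_lem} (iterated over the summands), forces every $g_{ij}$ to be a scalar multiple of $p_i$. Linear independence then kills $g_{ij}$ for $i,j\le n$, so $g_{i,n+1}=\lambda_i p_i$, and the last row of \eqref{nbp} gives the identity. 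There is no need for integer coefficients, rational approximation, or many copies of $\tfrac1N p_i$: the property you need from an extreme point is inherited by all its nonzero scalar multiples, so a single element $\lambda_i p_i$ already does the job that in Lemma~\ref{indecomplem2} required $k$ copies of the genuinely indecomposable $g$.

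Two smaller points. First, your parenthetical claim that the weak decomposition property ``is exactly what the proof of Lemma~\ref{indecomplem2} uses'' is not accurate: that proof explicitly uses strong indecomposability (``there is exactly one $j$ with $g_{i,j}$ nonzero''). With only the weak property, mass from a $p_i$-copy could still flow to another $p_i$-copy, so your assertion that ``each such small vector still forces all but one of its $g_{ij}$ to vanish'' is false as stated. The argument can be repaired --- one groups the $g_{n+1,j}$ by which $p_i$ they are multiples of and uses $\sum_j|\alpha_j|\ge|\sum_j\alpha_j|$ --- but this is extra work. Second, once you see the paper's one-shot setup, invoking Lemma~\ref{indecompcor} by analogy is unnecessary: the whole proof is the four lines above.
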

\begin{proof}
Consider the points $g_1 = \lambda_1 p_1, \dots, g_n = \lambda_n p_n$, $g_{n+1} = -\sum_{i=1}^n \lambda_i p_i$. By assumption there are $g_{i,j} \in G$ that satisfy \eqref{nbp}. With Lemma~\ref{extreme_lem} we conclude that for each $i$, all the vectors $g_{i,1},\dots, g_{i,n}$ are multiples of $p_i$. Since all the $p_i$'s are linearly independent this implies that $g_{i,j} = 0$ for all $i,j \in \{1,\dots,n\}$. Hence $g_{i,n+1} = \lambda_ip_i$ and further
\[
\biggl\|\sum_{i=1}^n \lambda_i p_i \biggr\| = \|g_{n+1}\| = \sum_{i=1}^n\|g_{i,n+1}\| = \sum_{i=1}^n|\lambda_i|\|p_{i}\| \ ,
\]
as claimed.
\end{proof}
\begin{lemma}
\label{l1decomp_lem2}
Assume that $X$ is a normed space of dimension $n$ that has nonbranching optimal transport plans, then $X$ is linearly isometric to $\ell^n_1$.
\end{lemma}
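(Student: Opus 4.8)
The plan is to build an explicit linear isometry $T : \ell^n_1 \to X$ out of $n$ linearly independent extreme points of the unit ball, using Lemma~\ref{l1decomp_lem1} as the key input, and then to observe that a linear isometry between normed spaces of equal finite dimension is automatically an isomorphism.

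First I would recall that in a finite-dimensional normed space the closed unit ball $\mathbf{B}_X(0,1)$ is compact and convex, so by the Minkowski (Krein--Milman) theorem it equals the closed convex hull of its set $E$ of extreme points. The one point that genuinely needs checking is that $E$ spans $X$: if $V \defl \operatorname{span}(E)$ were a proper subspace, it would be closed, and hence $\mathbf{B}_X(0,1) = \overline{\operatorname{conv}}(E) \subset V$, contradicting the fact that $\mathbf{B}_X(0,1)$ is a neighbourhood of $0$ and therefore contains a linear basis of $X$. Consequently we may select linearly independent extreme points $p_1,\dots,p_n \in E$; as noted in the proof of Lemma~\ref{extreme_lem}, each of them has $\|p_i\| = 1$.

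Then I would define $T : \ell^n_1 \to X$ on the standard basis $e_1,\dots,e_n$ of $\mathbb R^n$ by $T(e_i) \defl p_i$ and extend linearly. For $x = \sum_{i=1}^n \lambda_i e_i \neq 0$, applying Lemma~\ref{l1decomp_lem1} to the subfamily $\{p_i : \lambda_i \neq 0\}$ of linearly independent extreme points yields $\|Tx\| = \bigl\| \sum_{i:\, \lambda_i \neq 0} \lambda_i p_i \bigr\| = \sum_{i:\, \lambda_i \neq 0} |\lambda_i|\, \|p_i\| = \sum_{i=1}^n |\lambda_i| = \|x\|_{\ell^n_1}$, and the case $x = 0$ is trivial. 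Hence $T$ is a linear isometry, in particular injective, and since $\dim \ell^n_1 = \dim X = n$ it is also surjective; thus $T$ is a linear isometric isomorphism, as claimed.

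The only delicate step is verifying that the extreme points span $X$ (equivalently, that $n$ linearly independent extreme points exist); once that is in place, the conclusion follows immediately from Lemma~\ref{l1decomp_lem1} together with a dimension count, so I do not expect a serious obstacle.
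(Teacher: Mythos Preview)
Your proof is correct and follows exactly the paper's approach: use Krein--Milman to obtain $n$ linearly independent extreme points of the unit ball, then invoke Lemma~\ref{l1decomp_lem1} to conclude that the linear map sending the standard basis of $\ell^n_1$ to these points is an isometry. The paper's own proof states this in two sentences and leaves the details (spanning argument, construction of $T$, dimension count) implicit, whereas you have spelled them out carefully.
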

\begin{proof}
According to Lemma~\ref{l1decomp_lem1} the only thing that needs to be shown is that $\mathbf B_X(0,1)$ has at least $n$ linearly independent extreme points. But this is a simple consequence of the Krein-Milman theorem.
\end{proof}
\noindent Together with Lemma~\ref{indecompcor} this proves Theorem~\ref{class_otp}.

\section{Proof of Theorem~\ref{class_cotp} and of Proposition~\ref{class_czt}}
\label{class_cotpczt}
\noindent We recall that Theorem \ref{class_cotp} concerns the classification of all normed Abelian groups $(G,|\cdot|)$ that have acyclic nonbranching optimal transport plans. We first show that the groups mentioned in the statement of Theorem~\ref{class_cotp} indeed satisfy the properties stated in Definition~\ref{acyclic_def}.
\begin{lemma}
\label{check_groups}
All the groups listed in Theorem~\ref{class_cotp} have acyclic nonbranching optimal transport plans.
\end{lemma}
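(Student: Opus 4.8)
The plan is to verify the property group by group. For $\mathbb R$ and $\mathbb Z$ with the Archimedean norm, the classical one-dimensional optimal transport picture (as in the Example following Definition~\ref{optitransp}) already produces a transport plan whose support is a forest: after reordering so that $g_1 \geq \cdots \geq g_k \geq 0 \geq g_{k+1} \geq \cdots \geq g_n$, one transports the positive masses to the negative masses; the greedy ``north-west corner'' matching assigns to each pair $(i,j)$ with $i \leq k < j$ a coefficient $g_{ij} \geq 0$, and the resulting bipartite graph can be chosen to have no cycle because at each step one either exhausts a source or fills a sink, removing a vertex or merging, so at most $n-1$ edges are ever used. One checks $|g_i| = \sum_j |g_{ij}|$ holds because all contributions to a given $g_i$ have the same sign. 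So I would record this as the ``$\mathbb R$ and $\mathbb Z$'' case, citing the Example.

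For $\mathbb Z_2$ the argument is short: every nonzero element is $1$ with $|1| = 1$, and $\sum g_i = 0$ means an even number $2m$ of the $g_i$ equal $1$. I would pair them up arbitrarily into $m$ disjoint pairs; setting $g_{ij} = 1$ on those pairs and $0$ elsewhere gives $|g_i| = \sum_j |g_{ij}| = 1$ for the active vertices and the support is a perfect matching on $2m$ vertices, hence a forest. For $\mathbb Z_2 \times \mathbb Z_2$ with the stated norm, I would first reduce to the two coordinate projections: write each $g_i = (a_i, b_i)$ with $\sum a_i = 0$ in $\mathbb Z_2$ and $\sum b_i = 0$ in $\mathbb Z_2$. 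The subtlety is that the naive approach of handling each coordinate separately can create cycles when combining, so instead I would argue directly. List the vertices where $g_i \neq 0$; partition them into those of type $(1,0)$, type $(0,1)$, type $(1,1)$. Since $\sum a_i = 0$, the number of vertices with first coordinate $1$ (i.e. types $(1,0)$ and $(1,1)$ together) is even; similarly for the second coordinate. I would then build a matching: connect $(1,1)$ vertices to $(1,1)$ vertices, or connect a $(1,0)$ to a $(0,1)$ using edge label $(1,1)$ (legitimate since $|(1,1)| = 1 + \alpha = |(1,0)| + |(0,1)|$, so the norm-additivity in \eqref{nbp} at those two endpoints is met), and pair leftover $(1,0)$'s among themselves and leftover $(0,1)$'s among themselves. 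A short parity bookkeeping shows this can always be completed to a matching covering all active vertices, and a matching has no cycles.

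The main obstacle I expect is the $\mathbb Z_2 \times \mathbb Z_2$ case: one must check that the norm-additivity condition $|g_i| = \sum_j |g_{ij}|$ in \eqref{nbp} is compatible with using ``diagonal'' edges labelled $(1,1)$ between a $(1,0)$ endpoint and a $(0,1)$ endpoint — this works precisely because of the hypothesis $\alpha \geq 1$ forcing $|(1,1)| = 1 + \alpha$ rather than something smaller — and that the parity counts always allow the leftover vertices of each pure type to be paired among themselves (the count of type-$(1,0)$ vertices plus the count of type-$(1,1)$ vertices is even, and likewise with $(0,1)$; a case analysis on these two parities shows a valid matching exists). Once the matching is exhibited, acyclicity is automatic since a graph that is a disjoint union of single edges contains no cycle, and nonbranching and ``optimal transport plans'' (existence of the infimum) are inherited because all these groups are proper, hence have optimal transport plans, and the exhibited plan realizes equality in \eqref{nbp}.
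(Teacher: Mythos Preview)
Your treatment of $\mathbb R$, $\mathbb Z$ and $\mathbb Z_2$ is fine and matches the paper (the north-west corner description is just the unrolled version of the paper's induction on $n$). The problem is your $\mathbb Z_2\times\mathbb Z_2$ argument.

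The move ``connect a $(1,0)$ to a $(0,1)$ using edge label $(1,1)$'' does not satisfy \eqref{nbp}. If $g_i=(1,0)$ and the only nonzero $g_{ij}$ is $(1,1)$, then the group constraint $g_i=\sum_j g_{ij}$ would force $(1,0)=(1,1)$, which is false; and the norm constraint $|g_i|=\sum_j|g_{ij}|$ would force $1=1+\alpha$, also false. The identity $|(1,1)|=|(1,0)|+|(0,1)|$ is relevant only at a vertex of type $(1,1)$ with two outgoing edges labeled $(1,0)$ and $(0,1)$, not at the endpoints you are attaching. More structurally, a perfect-matching approach cannot succeed: if the counts $i,j,k$ of types $(1,1),(1,0),(0,1)$ are all odd (the zero-sum condition forces them to share parity), then there are an odd number of nonzero vertices, so no matching covers them.

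The paper's fix is exactly what your scheme is missing: pair within each type first, and in the residual case $i=j=k=1$ with $a=(1,1),\,b=(1,0),\,c=(0,1)$ use the \emph{non-matching} plan $g_{ab}=(1,0)$, $g_{ac}=(0,1)$, $g_{bc}=0$. This is a two-edge star centered at $a$, hence acyclic, and one checks directly that both constraints in \eqref{nbp} hold at every vertex (in particular $|a|=|g_{ab}|+|g_{ac}|=1+\alpha$). So the correct argument abandons the insistence on a matching in this one residual configuration.
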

\begin{proof}
The proof for $\mathbb Z_2$ is straightforward since a transport plan for $g_1= \dots= g_{2n} = 1 \in \mathbb Z_2$ is obtained for example by the pairing $g_{2i-1,2i} = g_{2i,2i-1} = 1$ for $i=1,\dots,n$ and $g_{ij} = 0$ otherwise. Being a disjoint union of edges, the corresponding graph is automatically cycle-free.

\medskip
\noindent The argument for $\mathbb Z_2\times \mathbb Z_2$ is similar. Consider a collection $a_1,\dots,a_i = (1,1)$, $b_1,\dots,b_j = (1,0)$, $c_1,\dots,c_k = (0,1)$ of elements in $\mathbb Z_2\times \mathbb Z_2$ with total sum equal to zero. We may construct a nonbranching optimal transport plan $\{g_{ij}\}$ that verifies \eqref{nbp} by pairing off elements inside each set $\{a_1,\dots,a_i\}$, $\{b_1,\dots,b_j\}$ and $\{c_1,\dots,c_k\}$ separately. We are then left with the leftover cases where $i,j,k \in \{0,1\}$. The only nontrivial situation occurs when $i=j=k=1$ with $a = (1,1), b=(1,0)$ and $c=(0,1)$. In this case the transport plan given by $g_{ab} = (1,0)$, $g_{ac}=(0,1)$ and $g_{bc}=(0,0)$ doesn't produce a cycle.
\medskip
Finally we assume that $G = \mathbb R$. The proof for $G = \mathbb Z$ is similar. We prove the statement by induction on the number of elements $g_1,\dots,g_n \in \mathbb R$ with $g_1 + \dots + g_n = 0$. In case $n = 2$, the problem is trivial. Assume that the statement holds for $n-1 \geq 2$. Assume that the points $g_1,\dots,g_n \in \mathbb R$ with $g_1 + \dots + g_n = 0$ are ordered in such a way that $g_{i} \geq g_{i+1}$. Since all of the points sum to zero, it holds that $g_1 \geq 0 \geq g_n$ and by symmetry we can assume without loss of generality that $g_1 \geq |g_n|$. By induction, we can solve the problem for the points $h_1 \mathrel{\mathop:}= g_1 + g_n, h_2 \mathrel{\mathop:}= g_2,\dots, h_{n-1} \mathrel{\mathop:}= g_{n-1}$ to obtain a acyclic nonbranching optimal transport plan $g_{ij}$, $i=1,\dots,n-1$. By adding to these elements the elements $-g_{n1} = g_{1n} \mathrel{\mathop:}= -g_n$ and $g_{jn}=g_{nj} \mathrel{\mathop:}= 0$ for $2\le j\le n$ we obtain an acyclic nonbranching optimal transport plan for the original problem.
\end{proof}
\noindent The classification of these groups is simplified by first considering the case of only three elements in $G$. Indeed, in order to obtain the groups in Theorem~\ref{class_cotp} we only have to consider the cases $n=3$ and $n=4$ in Definition~\ref{acyclic_def}.
\begin{lemma}
\label{collinearimplication}
If $G$ has acyclic nonbranching optimal transport plans, then $G$ has collinear zero-mean triples \eqref{czt}.
\end{lemma}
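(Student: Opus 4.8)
The plan is to take an arbitrary zero-mean triple $a,b,c \in G$ with $a+b+c=0$ and show it is collinear, i.e.\ that one of the sides equals the sum of the other two. Since $G$ has acyclic nonbranching optimal transport plans, I would first apply the property with $n=3$ to the collection $g_1 = a$, $g_2 = b$, $g_3 = c$. This yields coefficients $g_{ij}$ satisfying \eqref{nbp} whose associated graph on the three vertices has no cycle. A graph on three vertices with no cycle has at most two edges, so at least one of $g_{12}, g_{13}, g_{23}$ vanishes; say $g_{23} = 0$ (the other cases being symmetric). Then the constraint $g_3 = g_{31} + g_{32} = -g_{13}$ forces $g_{13} = -c$, and $|g_3| = |g_{31}| + |g_{32}| = |g_{13}|$ is automatic, while $g_1 = g_{12} + g_{13}$ gives $g_{12} = a - g_{13} = a + c = -b$, so $g_{12} = -b$. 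The remaining norm identity $|g_1| = |g_{12}| + |g_{13}|$ reads $|a| = |b| + |c|$, which is exactly one of the three collinearity alternatives in \eqref{collinear}.

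The only subtlety is the degenerate or ``trivial'' cases: if one of $a,b,c$ is zero, say $c=0$, then $a = -b$ and $|a| = |b| = |b| + |c|$ holds trivially, so such triples are collinear (or excluded as trivial by Definition~\ref{collineardef}); similarly I should check that the graph having two edges rather than the claimed configuration still produces a collinearity relation. If instead $g_{12} = 0$ vanishes, the same computation with indices permuted gives $|c| = |a| + |b|$; if $g_{13} = 0$, it gives $|b| = |a| + |c|$. In every case one of the three equalities in \eqref{collinear} holds, so the triple is collinear.

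I expect there to be essentially no obstacle here: the argument is a direct specialization of the definition to $n=3$, combined with the elementary graph-theoretic fact that a forest on three vertices omits at least one of the three possible edges. The only thing to be careful about is bookkeeping with the signs and the $g_{ij}=-g_{ji}$ convention, and handling the case distinction over which edge is missing, but all three cases are identical up to relabeling $a,b,c$. Hence the proof is short: invoke acyclicity for $n=3$, observe that some $g_{ij}=0$, and read off the corresponding norm identity from the last line of \eqref{nbp}.
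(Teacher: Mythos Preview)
Your proposal is correct and follows essentially the same approach as the paper: apply Definition~\ref{acyclic_def} with $n=3$, use acyclicity to force some $g_{ij}$ (say $g_{23}$) to vanish, and read off $|g_1| = |g_{12}| + |g_{13}| = |g_2| + |g_3|$ from \eqref{nbp}. The paper's proof is terser but the argument is identical.
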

\begin{proof}
Let $g_1,g_2,g_3 \in G$ with $g_1+g_2+g_3 = 0$ and $g_{ij}$ as in Definition~\ref{acyclic_def}. Since the graph associated with $g_{ij}$ doesn't contain a cylce there is some $g_{ij}$, say $g_{23}$, such that $g_{23} = 0$. Then
\begin{equation*}
|g_1| = |g_{12}| + |g_{13}| = |g_{2} - g_{23}| + |g_{3} + g_{23}| = |g_2| + |g_3| \ .
\end{equation*}
This is precisely what we want.
\end{proof}
\noindent In order to classify all the groups with acyclic nonbranching optimal transport plans we first classify the groups with collinear zero-mean triples.

\subsection{Torsion groups with collinear zero-mean triples}
We recall that a group $G$ is a \textbf{torsion group} if for all $g\in G$ there exists a natural number $n$ such that summing $g$ to itself $n$ times we obtain $0_G$, i.e.\ $ng=0_G$.
\begin{proposition}[Classification of torsion groups with \eqref{czt}]
\label{classtorsion}
The following ones are the only normed Abelian torsion groups that have \eqref{czt}, up to rescaling of their norm by a constant factor:
\begin{itemize}
\item $\mathbb Z_2$,
\item $\mathbb Z_4$ with norm satisfying $|1|=1, |2|=2$,
\item $\mathbb Z_2\times \mathbb Z_2$ with norm satisfying $|(1,0)|=1, |(0,1)|=\alpha, |(1,1)|=1+\alpha$ for any choice of $\alpha \geq 1$.
\end{itemize}
\end{proposition}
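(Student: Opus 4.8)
The plan is to work out the structure of a torsion normed Abelian group $G$ satisfying \eqref{czt} by exploiting the collinearity condition as a very rigid constraint on how norms add up. First I would record the basic consequences of \eqref{czt} for pairs: applying it to the triple $(a, b, -a-b)$ shows that for \emph{every} $a,b \in G$ one of $|a+b| = |a|+|b|$, $|a| = |b| + |a+b|$, or $|b| = |a| + |a+b|$ holds. In particular, taking $b = a$ gives that either $|2a| = 2|a|$ or $|2a| = 0$ (hence $2a = 0$) — so every element is either "Archimedean-like up to order $2$" or has order dividing $2$ after one doubling. More importantly, applying \eqref{czt} to $(a,a,-2a)$ and iterating, for any element $a$ of finite order $m$ the sequence $|a|, |2a|, |3a|, \dots$ is forced to be "unimodal with integer-like increments": each step either adds $|a|$ or the triple collinearity forces a collapse. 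The key structural input is that in a torsion group every element generates a finite cyclic subgroup $\mathbb Z_p$-like piece, and \eqref{czt} restricted to that cyclic subgroup already pins down the norm there.

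Next I would analyze a single cyclic subgroup $\langle g\rangle \cong \mathbb Z_m$ with $|g| = 1$ (after rescaling). Using \eqref{czt} on triples $(g, kg, -(k+1)g)$ one shows inductively that $|kg|$ must follow the "tent" pattern $|kg| = \min(k, m-k)$ for $k = 0,\dots,m-1$, OR a degenerate pattern; but comparing the triple $(g,\dots)$ constraints at the "turnaround" forces $m \in \{2, 3, 4\}$. Concretely: for $\mathbb Z_3$ the tent pattern gives $|1|=|2|=1$, and one checks the triple $(1,1,1)$ — with $1+1+1=0$ — requires $1 = 1+1$, a contradiction, ruling out $\mathbb Z_3$ (and, similarly, $\mathbb Z_m$ for $m\ge 5$ fails because the turnaround produces a non-collinear triple). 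For $\mathbb Z_4$, $|1|=1$, $|2|=2$, $|3|=1$ is forced, which is exactly the listed group. So the only cyclic pieces allowed are $\mathbb Z_2$ and $\mathbb Z_4$ with the stated norms.

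Then I would handle how two such pieces combine. Suppose $g, h$ generate distinct cyclic subgroups; by Lemma~\ref{indecomplem2}-type reasoning (or directly from \eqref{czt} applied to $(g, h, -g-h)$ and its relatives) I would show that either $|g+h| = |g| + |h|$ for all relevant combinations — giving an $\ell_1$-splitting $\langle g\rangle \times \langle h\rangle$ — or there is a nontrivial relation collapsing the two. The case analysis then quickly bounds the group: a $\mathbb Z_4$ factor cannot coexist with another nontrivial factor (checking $(1,0,0)$-type triples in $\mathbb Z_4 \times \mathbb Z_2$ produces a zero-mean triple like $((2,0),(1,1),(1,1))$ that violates collinearity), so $\mathbb Z_4$ must appear alone. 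A $\mathbb Z_2 \times \mathbb Z_2$ with the three nonzero elements having norms $1, \alpha, 1+\alpha$ is consistent — the only zero-mean triples up to symmetry are $\{(1,0),(0,1),(1,1)\}$ (collinear iff $1 + \alpha = 1+\alpha$, always true, and we need the triangle inequalities, which give $\alpha \le 1 + \alpha$ trivially but pairing two equal elements needs $|2x|=0$, fine) and $\{x,x,0\}$-types; and $\alpha\ge 1$ is forced by applying \eqref{czt} to $((1,0),(1,0),(0,0))$ versus the mixed triple. Finally, three or more $\mathbb Z_2$-factors are excluded because $(e_1, e_2, e_1+e_2)$ together with $e_3$ forces, via $(e_1+e_2, e_3, e_1+e_2+e_3)$, a chain of collinearity relations that over-determines the norms and leads to a contradiction (e.g. $|e_1+e_2+e_3|$ cannot simultaneously satisfy all the forced equalities).

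The main obstacle I expect is the combination step: ruling out all "entangled" extensions (non-split ways two cyclic pieces could sit inside $G$) and all products with three or more factors, since a priori a torsion group can be an arbitrary direct sum or a more complicated extension. The cyclic-piece classification is clean; the real work is a somewhat delicate but elementary finite case analysis showing that \eqref{czt} is so restrictive that nothing beyond the four listed groups survives, and that $\mathbb Z_2$ is the \emph{only} factor that can appear with multiplicity greater than one (and then only multiplicity two). I would organize this as: (i) every element has order $\le 4$; (ii) classify monogenic subgroups; (iii) show the subgroup generated by any two elements is one of the listed groups; (iv) conclude $G$ itself is, since $G$ is generated by its elements and any failure would already be visible on a two-generator subgroup.
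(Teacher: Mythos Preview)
Your overall strategy is sound and would work, but it differs from the paper's in the cyclic step and has a few loose ends worth flagging.

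\textbf{Cyclic subgroups.} The paper does not analyze a tent pattern. For $\mathbb Z_n$ with $n$ odd it uses a one-line trick: pick $a$ of \emph{maximal} norm and look at the triple $a,a,-2a$; since $n$ is odd, $2a\neq 0$, and neither $|2a|=2|a|$ (violates maximality) nor $|2a|=0$ is possible. For $\mathbb Z_{2^{m+1}}$, $m\ge 2$, the paper first forces $|2^k|=2^k|1|$ from the triples $2^{k-1},2^{k-1},-2^k$, then uses the relation $2(2^m-1)\equiv -2$ to get $|2^m-1|=|1|$; the zero-mean triple $2^m,\,2^m-1,\,1$ then has norms $2^m,1,1$, which is not collinear. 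Your ``turnaround'' idea points in the same direction but is not pinned down; in particular, the tent pattern is \emph{not} uniquely forced step by step (e.g.\ in $\mathbb Z_6$ the value of $|3g|$ is ambiguous from the triple $(g,2g,3g)$ alone), so you would need auxiliary triples anyway. The paper's two short arguments replace that entire case analysis.

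\textbf{Combining factors.} The paper invokes the structure theorem for finitely generated Abelian torsion groups to reduce to prime-power factors $\mathbb Z_{p^m}$, then excludes $\mathbb Z_2^3$ (choosing three norm-minimal independent elements forces the $\ell_1$-norm, and then the triple $(1,1,0),(1,0,1),(0,1,1)$ with norms $\alpha+\beta,\alpha+\gamma,\beta+\gamma$ fails collinearity) and $\mathbb Z_2\times\mathbb Z_4$ (a short chain of forced norm values ending in the non-collinear triple $(1,0),(1,1),(0,-1)$ with norms $2\alpha,\tfrac12\alpha,\tfrac12\alpha$). Your plan for these cases is compatible, but note that your appeal to Lemma~\ref{indecomplem2} is illegitimate here: that lemma assumes nonbranching optimal transport plans, not \eqref{czt}, so you must argue directly from \eqref{czt} as you parenthetically suggest. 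Also, your proposed witness $((2,0),(1,1),(1,1))$ in $\mathbb Z_4\times\mathbb Z_2$ is a zero-mean triple, but you have not yet determined $|(1,1)|$, so it does not by itself give a contradiction; the paper's chain of deductions is needed.

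\textbf{Minor point.} The condition $\alpha\ge 1$ for $\mathbb Z_2\times\mathbb Z_2$ is not a consequence of \eqref{czt} applied to $((1,0),(1,0),(0,0))$ --- that triple is trivially collinear. It is purely a labeling/normalization convention (swap the factors and rescale).

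In short: your plan is viable, but the paper's max-norm trick for odd $n$ and the $2(2^m{-}1)\equiv -2$ computation for $2$-power cyclic groups are substantially cleaner than a tent-pattern analysis, and you should drop the reference to Lemma~\ref{indecomplem2}.
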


\begin{proof}
\par For the groups $\mathbb Z_2,\mathbb Z_4,\mathbb Z_2\times\mathbb Z_2$ the determination of the norms satisfying \eqref{czt} follows directly by enumerating the zero-mean triples in every case.

\medskip

\noindent Recall that any finitely generated Abelian torsion group has a direct product decomposition in which the factors are $\mathbb Z_{p^m}$ where $p$ is a prime number and $m$ is an integer, thus we just restrict to discussing such factors.

\medskip

\noindent Suppose some $\mathbb Z_n$ with $n$ odd has some norm $|\cdot|$ with \eqref{czt}. Let $a \in \mathbb Z_n$ be an element that maximizes $|a|$ and consider the triple $a$, $a$, $-2a$ in $\mathbb Z_n$. Since $n$ is odd and $a \neq 0$ it must be the case that $|-2a|\neq 0$ and hence none of the inequalities $|a| + |a| = |-2a|$ and $|a| + |-2a| = |a|$ can be satisfied by the maximality of $|a|$.

\medskip

\noindent We next exclude the factors $\mathbb Z_n$ for $n=2^{m+1}, m\ge2$. In this case, using the \eqref{czt} property we find that $|2|=|1|+|1|$ is the only possible collinearity formula for the triple $1,1,-2$, and similarly $|2^k|=|2^{k-1}|+|2^{k-1}|$ is the only possible collinearity formula for $2^{k-1}, 2^{k-1}, -2^k$ and $k=1,\dots,m$. By induction we find $|2^m|=2^m|1|$. But as $2(2^m-1)\equiv -2\ (\rm{mod}\ 2^{m+1})$ we also similarly find $2|2^m-1|=|2|=2|1|$ thus the zero-mean triple $2^m, 2^m-1,1$ has norms proportional to $2^m,1,1$, and this contradicts the triangular inequality for $m\ge 2$.

\medskip

\noindent In order to exclude $G = \mathbb Z_2\times\mathbb Z_2\times \mathbb Z_2$, consider $g_1,g_2,g_3 \in G\setminus \{0\}$ such that $|g_1| = \min \{|g|\ | \ g \in G\setminus\{0\}\}$, $|g_2| = \min \{|g|\ | \ g \in G\setminus\langle g_1\rangle\}$ and $|g_3| = \min \{|g|\ | \ g \in G\setminus \langle g_1,g_2\rangle\}$. The elements $g_1,g_2,g_3$ are indecomposable in the sense that whenever $g_i,g,h$ is a zero mean triple, then $|g_i|+|h| = |g|$ or $|g_i|+|g|=|h|$. Since the elements $g_1,g_2,g_3$ generate $G$ we can express $G$ as the product $\langle g_1 \rangle \times \langle g_2 \rangle \times \langle g_3 \rangle$ and identify $g_1=(1,0,0)$, $g_2=(0,1,0)$, $g_3=(0,0,1)$. Then the norm of an element $(x,y,z)$, where we chose $\mathbb Z$-representatives $x,y,z\in\{0,1\}$, must be given by $|(x,y,z)|=\alpha x+ \beta y + \gamma c$ for some $0 < \alpha \leq \beta \leq \gamma$. For the collinear triple $(1,1,0), (1,0,1), (0,1,1)$ in particular we have the norms $\alpha+\beta$, $\alpha+\gamma$, $\beta+\gamma$ and we find that the only possible collinearity formula is $2\alpha + \beta+\gamma=\beta+\gamma$, and thus is false. This provides a contradiction to the existence of a norm on $\mathbb Z_2\times \mathbb Z_2 \times \mathbb Z_2$ that satisfies \eqref{czt}.

\medskip

\noindent As a consequence of the above, the only possible factors in the direct decomposition of $G$ which are still allowed are $\mathbb Z_2$, $\mathbb Z_4$, and we know that $\mathbb Z_2$ can appear at most twice in this product. 

\medskip

\noindent It thus remains to exclude the appearance of $\mathbb Z_2\times \mathbb Z_4$ and of $\mathbb Z_4\times \mathbb Z_4$. Suppose that a norm on the group $\mathbb Z_2\times\mathbb Z_4$ had \eqref{czt} and set $\alpha \mathrel{\mathop:}= |(0,2)|$. As the triple $(1,1)$, $(1,1)$, $(0,2)$ has zero mean, we must have by collinearity that $|(1,1)|=\frac{1}{2}|(0,2)|=\frac{1}{2}\alpha$. Similarly $|(0,1)|=\frac{1}{2}\alpha$. Then the triple $(0,1)$, $(1,1)$, $(1,2)$ is of zero mean and thus $|(1,2)| = \frac{1}{2}\alpha + \frac{1}{2}\alpha = \alpha$. But also $(1,0)$, $(0,2)$, $(1,2)$ is of zero mean with $|(0,2)|=|(1,2)|=\alpha$, hence $|(1,0)| = 2\alpha$. Finally the zero mean triple $(1,0)$, $(1,1)$, $(0,-1)$ implies the collinearity of the numbers $2\alpha$, $\frac{1}{2}\alpha$, $\frac{1}{2}\alpha$, a contradiction. Therefore $\mathbb Z_2\times\mathbb Z_4$ has no norm satisfying \eqref{czt}. As $\mathbb Z_4\times \mathbb Z_4$ has a $\mathbb Z_2\times\mathbb Z_4$-subgroup as well, it also has no norm satisfying \eqref{czt}.
\end{proof}

\subsection{Torsion-free groups with collinear zero-mean triples}
We say that two normed Abelian groups $(G,|\cdot|_G)$ and $(H,|\cdot|_H)$ are equivalent $(G,|\cdot|_G) \simeq (H,|\cdot|_H)$ if there is a $\lambda > 0$ and a group isomorphism $\varphi : G \to H$ such that $|g|_G = \lambda |\varphi(g)|_H$ for all $g \in G$. Also recall that a group is \textbf{torsion-free} if there are no $g\in G\setminus \{0_G\}$ and $n \in \mathbb N$ with $ng=0_G$. In this subsection we want to prove the following proposition. 

\begin{proposition}[Classification of torsion-free groups with \eqref{czt}]
\label{classtorsionfree}
Let $(G,|\cdot|_G)$ be a complete torsion-free normed Abelian group that satisfies \eqref{czt}. Then either $G \simeq \mathbb Z$ or $G \simeq \mathbb R$.
\end{proposition}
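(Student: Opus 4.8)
The plan is to use \eqref{czt} together with torsion-freeness to make $G$ into a totally ordered group that is Archimedean, to build from the norm an isometric group embedding $G\hookrightarrow\mathbb R$, and then to invoke completeness; I assume $G\neq\{0\}$, the only degenerate case. First I would record a reformulation of \eqref{czt}: applying it to the zero-mean triples $(a,b,-(a+b))$ and $(a,-b,b-a)$ shows that for all $a,b\in G$ one has $|a+b|\in\{|a|+|b|,\ \bigl||a|-|b|\bigr|\}$. Since $G$ is torsion-free, $ng\neq 0$ whenever $g\neq 0$ and $n\geq1$, so \eqref{czt} applied to $(g,g,-2g)$ forces $|2g|=2|g|$, and a short induction using \eqref{czt} on a few triples such as $(g,g,-2g)$ and $((n-1)g,g,-ng)$ (with a small auxiliary argument at $n=3$) then yields $|ng|=n|g|$ for all $n\geq1$.

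For nonzero $a,b$, write $a\uparrow b$ when $|a+b|=|a|+|b|$. Using $|2a|=2|a|$, one checks that \emph{exactly one} of $a\uparrow b$ and $a\uparrow(-b)$ holds: ``not both'' follows by feeding $2a=(a+b)+(a-b)$ into \eqref{czt}, and for ``at least one'' one assumes both fail, deduces $|a+b|=|a-b|=\bigl||a|-|b|\bigr|$, and then feeds $2a=(a+b)+(a-b)$ and $2b=(a+b)+(b-a)$ into \eqref{czt} to force $|a|=|b|=\bigl||a|-|b|\bigr|=0$, a contradiction. The crucial point, which I expect to be the real obstacle, is the transitivity statement: \emph{if $a\uparrow b$ and $a\uparrow c$ then $b\uparrow c$} (all nonzero). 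To prove it I would suppose $b\not\uparrow c$, so $|b+c|=\bigl||b|-|c|\bigr|$, say with $|b|>|c|$ (the case $|b|=|c|$ would give $c=-b$, contradicting that $a\uparrow b$ and $a\uparrow(-b)$ cannot both hold). Evaluating $|a+b+c|$ via \eqref{czt} in the groupings $(a+b)+c$ and $a+(b+c)$ leaves, in each case, a two-element set of possible values, and these two sets meet only in the value $|a|+|b|-|c|$; but the grouping $(a+c)+b$ forces $|a+b+c|\in\{|a|+|b|+|c|,\ \bigl||a|+|c|-|b|\bigr|\}$, and $|a|+|b|-|c|$ equals neither member (each such equality would force $|a|=0$, $|c|=0$, or $|b|=|c|$) --- a contradiction.

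Granting transitivity, $\uparrow$ becomes an equivalence relation on $G\setminus\{0\}$, and since for nonzero $a,b$ exactly one of $a\uparrow b$, $a\uparrow(-b)$ holds, it has exactly two classes, interchanged by $g\mapsto -g$. Fixing $g_0\neq0$ and putting $G^+:=\{0\}\cup\{a:a\uparrow g_0\}$, one obtains $G=G^+\cup(-G^+)$, $G^+\cap(-G^+)=\{0\}$, and $G^+$ is closed under addition; the only delicate case there is $|a|=|b|$, where if $a+b\in-G^+$ then $-(a+b)\uparrow a$ would force $|b|=3|a|$, which is impossible. Hence $a\leq b:\Leftrightarrow b-a\in G^+$ is a translation-invariant total order. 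For $a,b\in G^+$ one has $|a+b|=|a|+|b|$ (since $a\uparrow b$), so $a\leq b\iff|a|\leq|b|$ and $a=b\iff|a|=|b|$; and since $|nx|=n|x|\to\infty$ for $x\in G^+\setminus\{0\}$, the order is Archimedean.

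Finally, I would define $\phi:G\to\mathbb R$ by $\phi(g):=|g|$ for $g\in G^+$ and $\phi(g):=-|g|$ for $g\in-G^+$. This is well defined ($G^+\cap(-G^+)=\{0\}$), injective ($|g|=0$ only for $g=0$), and a group homomorphism (because $|\cdot|$ is additive on $G^+$ and the two halves match under $g\mapsto-g$, each time writing the elements involved as sums of elements of $G^+$); and $|g|_G=|\phi(g)|$ for all $g$, so $\phi$ is an isometric embedding of $(G,|\cdot|_G)$ onto a subgroup of $(\mathbb R,|\cdot|)$. Since $G$ is complete, $\phi(G)$ is a complete, hence closed, subgroup of $\mathbb R$, so $\phi(G)=\lambda\mathbb Z$ or $\phi(G)=\mathbb R$; therefore $G\simeq\mathbb Z$ or $G\simeq\mathbb R$. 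The only step that requires genuine care is the transitivity of $\uparrow$: everything before it is direct manipulation of \eqref{czt}, and everything after it is bookkeeping with the order.
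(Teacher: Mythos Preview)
Your overall architecture is exactly the paper's: define a relation on $G\setminus\{0\}$ (you write $a\uparrow b$ for $|a+b|=|a|+|b|$; the paper writes $a\sim b$ for $|a-b|<|a|+|b|$, which is the same relation), show it is an equivalence with exactly two classes exchanged by $g\mapsto-g$, build the signed-norm map $\phi:G\to\mathbb R$, check it is an isometric homomorphism, and finish with completeness. So the proposal is correct and follows the paper's route.

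The one genuine difference is the transitivity step, and yours is in fact cleaner. The paper proves transitivity by first establishing $|ng|=n|g|$ and then scaling: from $a\sim b$ it passes to $na\sim b$ with $n$ large enough that the collinearity of $(-na,b,na-b)$ is forced to read $|na-b|+|b|=|na|$, does the same for $c$, and combines. Your argument avoids scaling entirely: you compute the possible values of $|a+b+c|$ via the three groupings $(a+b)+c$, $a+(b+c)$, $(a+c)+b$ and show the resulting constraints are inconsistent. This only uses the reformulated \eqref{czt} and the ``exactly one of $a\uparrow b$, $a\uparrow(-b)$'' dichotomy, so it is more self-contained. One small slip to fix: in your closure-under-addition step, if $a,b\in G^+\setminus\{0\}$ and $a+b\in -G^+$, then $-(a+b)\uparrow a$ gives $|b|=|{-}b|=|a+b|+|a|=2|a|+|b|$, hence $|a|=0$, not $|b|=3|a|$; the contradiction is immediate and no separate ``$|a|=|b|$'' case is needed. (Equivalently, this is the paper's Lemma on $a\sim b\Rightarrow a+b\sim a$.)
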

\noindent If not stated otherwise, for the remainder of this subsection $(G,|\cdot|)$ denotes a torsion-free normed Abelian group that has \eqref{czt}. As a consequence of the fact that $G$ is torsion-free, for all $g \in G \setminus \{0_G\}$ the subgroup $\langle g \rangle < G$ is isomorphic to $\mathbb Z$. We prove first the following:

\begin{lemma}
\label{normonz}
Assume that $G=\langle g\rangle$ is an infinite cyclic group and $|\cdot|_G$ is a norm on it. Then $(G,|\cdot|_G)$ has \eqref{czt} if and only if for all $n \in \mathbb N$ it holds that
\begin{equation}
\label{cyclicsubgroups2}
|ng|_G = n|g|_G \ ,
\end{equation}
i.e.\ if and only if $(G,|\cdot|_G)$ is isomorphic to $\mathbb Z$ with its Archimedean norm.
\end{lemma}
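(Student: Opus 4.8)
The plan is to prove both directions of the equivalence. The easy direction is that \eqref{cyclicsubgroups2} implies \eqref{czt}: if $|ng|_G = n|g|_G$ for all $n \in \mathbb N$, then the norm on $\langle g \rangle \simeq \mathbb Z$ is just $n \mapsto n|g|_G$, i.e.\ the Archimedean norm up to the scaling constant $|g|_G$. For a zero-mean triple $ag, bg, cg$ with $a+b+c=0$ and integers $a,b,c$, one of them, say $c$, has sign opposite to (or equals the sum of the negatives of) the other two, giving $|a|+|b| = |c|$ as integers, hence $|ag|_G + |bg|_G = |cg|_G$; so every zero-mean triple is collinear.

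For the converse, I would assume $(G,|\cdot|_G)$ with $G = \langle g \rangle \simeq \mathbb Z$ has \eqref{czt} and show $|ng|_G = n|g|_G$ for all $n$. I would write $a_n \mathrel{\mathop:}= |ng|_G$, so $(a_n)_{n \geq 0}$ is a sequence of nonnegative reals with $a_0 = 0$, $a_1 > 0$, each $a_n > 0$ for $n \geq 1$ (torsion-freeness), and subadditivity $a_{m+n} \leq a_m + a_n$ from the triangle inequality. The key input is that for every triple of integers $p + q + r = 0$, one of the three collinearity equations holds among $a_{|p|}, a_{|q|}, a_{|r|}$. Specializing to $p = q = n$, $r = -2n$ gives: either $2a_n = a_{2n}$, or $a_n + a_{2n} = a_n$ (impossible since $a_{2n} > 0$), or the symmetric version (same). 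Hence $a_{2n} = 2a_n$ for all $n$, in particular $a_{2^k} = 2^k a_1$ by induction. More generally, specializing to $p = m, q = n, r = -(m+n)$ for positive $m,n$ gives that one of $a_m + a_n = a_{m+n}$, $a_m + a_{m+n} = a_n$, $a_n + a_{m+n} = a_m$ holds; the last two would contradict $a_{m+n} \geq \max\{a_m, a_n\} - \min\{a_m,a_n\}$... actually one must be careful, so the real work is to leverage these equations to force additivity everywhere.

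The cleanest route: first establish $a_{2n} = 2a_n$ as above. Then show by induction on $k$ that $a_n = n a_1$ for all $n \leq 2^k$. The base case $k=1$ is $a_1, a_2 = 2a_1$. For the inductive step, given it holds up to $2^k$, take any $n$ with $2^k < n < 2^{k+1}$ (the case $n = 2^{k+1}$ follows from doubling). Write $n = 2^k + j$ with $0 < j < 2^k$, so $2^k$ and $j$ and $n$ satisfy a zero-mean relation $2^k + j - n = 0$; by \eqref{czt} one of $a_{2^k} + a_j = a_n$, $a_{2^k} + a_n = a_j$, $a_j + a_n = a_{2^k}$ holds. By the inductive hypothesis $a_{2^k} = 2^k a_1$ and $a_j = j a_1$; combined with subadditivity $a_n \leq a_{2^k} + a_j = (2^k + j)a_1 = n a_1$ and the lower bound $a_n \geq a_{2^k} - a_j$ (from $a_{2^k} \leq a_n + a_j$, which is subadditivity applied to $2^k = n + (-j)$... here I need $a_{-j} = a_j$, which is norm axiom (2)) — so $a_n \geq (2^k - j) a_1 > 0$, ruling out $a_j + a_n = a_{2^k}$ unless $a_n = (2^k - j)a_1$, and ruling out $a_{2^k} + a_n = a_j$ outright since its left side exceeds $a_j$. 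So either $a_n = n a_1$ (the desired conclusion) or $a_n = (2^k - j) a_1$. To eliminate the latter, I would play the triple off against a different decomposition, e.g.\ $n = 2^{k-1} + (2^{k-1} + j)$ if $2^{k-1} + j \leq 2^k$, applying \eqref{czt} to $2^{k-1}, 2^{k-1}+j, n$ where both smaller indices are $\leq 2^k$ so the inductive hypothesis applies to them; this forces $a_n \in \{n a_1, ja_1, ... \}$ and intersecting with the previous possibilities pins down $a_n = na_1$.

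The main obstacle I anticipate is precisely this last elimination step: a single collinearity relation leaves two candidate values for $a_n$, and one must cleverly choose auxiliary zero-mean triples (using the already-known values $a_m = ma_1$ for $m \leq 2^k$) so that the intersection of the allowed value-sets is the singleton $\{na_1\}$. Having two independent decompositions $n = 2^k + j$ and $n = 2^{k-1} + (2^{k-1}+j)$ should generically suffice because the "wrong" values $(2^k - j)a_1$ and $(2^{k-1} - (2^{k-1}+j))a_1 = -ja_1 < 0$ they produce are incompatible; one needs to check the boundary sub-cases where $2^{k-1}+j > 2^k$ separately, handling those by subtracting a power of two the other way or by a downward induction on $n$ within the dyadic block. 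Once $a_n = n a_1$ is established for all $n$, the identification of $(G,|\cdot|_G)$ with $\mathbb Z$ equipped with $a_1$ times the Archimedean norm is immediate, completing the proof.
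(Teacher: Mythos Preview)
Your forward direction is fine, and the reduction to analysing $a_n \mathrel{\mathop:}= |ng|_G$ together with the doubling identity $a_{2n}=2a_n$ is correct. The dyadic-block induction is viable in principle, but the elimination step you yourself flag as the main obstacle is genuinely unresolved in your write-up: when $j = 2^{k-1}$ your two decompositions $n = 2^k + j$ and $n = 2^{k-1} + (2^{k-1}+j)$ yield the \emph{same} candidate set $\{na_1,\, 2^{k-1}a_1\}$, so intersecting them does not isolate $na_1$; and for $j > 2^{k-1}$ you have not specified a concrete replacement triple. Both sub-cases can be repaired (for instance via the zero-mean triple $(ng,\, (2^{k+1}-n)g,\, -2^{k+1}g)$, since $a_{2^{k+1}}$ is already known from doubling), but as written the argument has a gap.

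The paper avoids all of this with a much simpler step-by-step induction on $n$ rather than on dyadic blocks. Assuming $a_m = ma_1$ for all $m \leq n$, the zero-mean triple $(-g,\, -ng,\, (n+1)g)$ forces $a_{n+1} \in \{(n+1)a_1,\, (n-1)a_1\}$, and then the auxiliary zero-mean triple $(-2g,\, -(n-1)g,\, (n+1)g)$ forces $a_{n+1} \in \{(n+1)a_1,\, (n-3)a_1,\, (3-n)a_1\}$; for $n \geq 3$ the intersection is the singleton $\{(n+1)a_1\}$, while the first few values are handled directly. This two-triple scheme sidesteps the dyadic bookkeeping and the boundary sub-cases entirely, and is the route you should take.
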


\begin{proof}
The fact that the group $\mathbb Z$ with the usual norm has only zero-mean triples which are collinear follows by noting that $a=\pm|a|$ in these cases, and that the zero mean conditions $a+b+c=0$, for $a,b,c\neq0$ imply that $a,b,c$ don't have all the same sign.

\medskip

\noindent If $|\cdot|_G$ is a norm on $G\simeq\mathbb Z$ for which \eqref{czt} is true and if we have the normalization $|g|_G=1$, then by using iteratively the condition that the triples $-g$, $-ng$, $(n+1)g$ for $n=1,2,\ldots, $ are collinear we successively find $|2g|_G=2$ and for $n\ge 3$ we have $ |\pm(n+1)g|_G=n\pm 1$. For $n > 1$ the triple $-2g$, $-(n+1)g$, $(n+1)g$ (and the induction hypothesis $|-(n - 1)g|_G = n-1$) shows that $|\pm(n+1)g|_G=n-1$ is not allowed, thus the only norm with \eqref{czt} is the one satisfying \eqref{cyclicsubgroups2} as desired. 
\end{proof}
\noindent For $a,b \in G \setminus \{0_G\}$ we write $a \sim b$ if
\[
|a-b| < |a| + |b| \ .
\]
It is clear that $\sim$ is reflexive and symmetric. Next we want to establish that this is indeed an equivalence relation. Note that as a consequence of the triangle inequality, $a\sim b$ if and only if $|a-b| \neq |a|+|b|$.
\begin{lemma}
\label{equivalenceproperties}
Let $a,b,c \in G \setminus \{0_G\}$ and $m,n \in \mathbb N$. Then the following properties hold:
\begin{enumerate}
	\item $a \sim b$ or $a \sim -b$ is satisfied.
	\item $a \sim b$ if and only if $ma \sim nb$.
	\item If $a \sim b$ and $b \sim c$, then $a \sim c$.
	\item Only one of $a \sim b$ or $a \sim -b$ is satisfied.
	\item If $a \sim b$, then $a+b \sim a$.
\end{enumerate}
\end{lemma}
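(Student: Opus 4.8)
The plan is to derive all five properties from the collinearity hypothesis \eqref{czt} applied to a small number of carefully chosen zero-mean triples, supported by two elementary preliminary facts. First, since $G$ is torsion-free, every cyclic subgroup $\langle g\rangle$ is infinite and inherits \eqref{czt}, so Lemma~\ref{normonz} gives $|ng| = n|g|$ for all $g\in G$ and $n\in\mathbb N$; in particular $|2g|>0$ and $|2g| = 2|g|$. Second, by the triangle inequality, $a\not\sim b$ is equivalent to $|a-b| = |a|+|b|$ and $a\not\sim -b$ to $|a+b| = |a|+|b|$. Throughout, degenerate cases such as $a = \pm b$ are handled directly, using only reflexivity of $\sim$ and the fact that $a\sim -a$ would read $2|a| < 2|a|$, which is impossible.

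For part (1), if neither $a\sim b$ nor $a\sim -b$ held, then $|a-b| = |a+b| = |a|+|b|$; applying \eqref{czt} to the triple $a-b,\ 2b,\ -(a+b)$, each of the three collinearity alternatives collapses to $|a|=0$ or $|b|=0$, a contradiction. Part (4) is the complementary impossibility: supposing both $a\sim b$ and $a\sim -b$, \eqref{czt} applied to $a-b,\ 2b,\ -(a+b)$ excludes two of the three alternatives (here one uses $|2b| = 2|b|$), leaving $|a-b| + |a+b| = 2|b|$; the symmetric triple $b-a,\ 2a,\ -(a+b)$ likewise leaves $|a-b| + |a+b| = 2|a|$, so $|a| = |b| =: t$; then \eqref{czt} applied to $a-b,\ b,\ -a$ forces $|a-b|\in\{0,2t\}$, contradicting $0 < |a-b| < 2t$.

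Parts (1) and (4) together say that exactly one of $a\sim b$, $a\sim -b$ holds; equivalently, $a\sim b$ holds iff $|a+b| = |a|+|b|$. From this, part (5) is immediate: when $a\sim b$ we have $a+b\neq 0$ and $|(a+b)-a| = |b| < |a|+|b| = |a+b|$, i.e.\ $a+b\sim a$. For part (2), I would first prove $a\sim b\Rightarrow na\sim b$ by induction on $n$: given $na\sim b$, apply \eqref{czt} to the triple $na-b,\ a,\ b-(n+1)a$ (the cases $na=b$ and $(n+1)a=b$ being trivial) and check that each of the three collinearity alternatives yields $|(n+1)a-b| < (n+1)|a|+|b|$, i.e.\ $(n+1)a\sim b$; a symmetric application then upgrades this to $a\sim b\Rightarrow ma\sim nb$. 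The converse follows from (1) and (4): if $a\not\sim b$ then $a\sim -b$, hence $ma\sim -nb$ by the forward part, hence $ma\not\sim nb$.

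The main obstacle is part (3), transitivity. Applying \eqref{czt} directly to $a-b,\ b-c,\ c-a$ does not close, since the resulting estimates pick up an extra term $2|b|$. Instead I would pass to the additive form supplied by (1)+(4): $a\sim b$ and $b\sim c$ read $|a+b| = |a|+|b|$ and $|b+c| = |b|+|c|$, and the goal is $|a+c| = |a|+|c|$. After clearing a few degenerate cases (in particular one checks $a\neq -c$ and $a+b+c\neq 0$), apply \eqref{czt} to the two triples $a,\ b+c,\ -(a+b+c)$ and $a+b,\ c,\ -(a+b+c)$; substituting the known values of $|b+c|$ and $|a+b|$, the two resulting lists of admissible values for $|a+b+c|$ intersect, whenever $|a|\neq|c|$, only in $|a+b+c| = |a|+|b|+|c|$. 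Feeding this into \eqref{czt} for the triple $a+c,\ b,\ -(a+b+c)$ leaves $|a+c| = |a|+|c|$ as the only possibility, i.e.\ $a\sim c$; the remaining case $|a|=|c|$ is handled directly via \eqref{czt} applied to $a,\ c,\ -(a+c)$.
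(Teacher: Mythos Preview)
Your argument is correct, but the route differs from the paper's in two substantive ways.

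First, the logical order is inverted. The paper proves the items in the order (1), (2), (3), (4), (5): it gets transitivity (3) first and then deduces (4) in one line (if $a\sim b$ and $a\sim -b$ then $b\sim -b$, i.e.\ $|2b|<2|b|$). You instead establish (4) directly from \eqref{czt}, before (3), via the two triples $a-b,\,2b,\,-(a+b)$ and $b-a,\,2a,\,-(a+b)$ together with a final application to $a-b,\,b,\,-a$. This is more work than the paper's (4), but it buys you the clean reformulation ``$a\sim b \iff |a+b|=|a|+|b|$'' early, which you then exploit for (5), for the converse in (2), and above all for (3).

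Second, and more interestingly, your proof of transitivity (3) is genuinely different. The paper argues asymptotically: from $a\sim b$ it passes to $na\sim b$ for large $n$, uses \eqref{czt} on the triple $-na,\,b,\,na-b$ to force $|na-b|+|b|=|na|$ once $n>|b|/|a|$, does the same for $c$, adds, and divides by $n$ to squeeze out $|a-c|<|a|+|c|$. Your argument is entirely finitary: with the additive form $|a+b|=|a|+|b|$ and $|b+c|=|b|+|c|$ in hand, you apply \eqref{czt} to the two triples $a,\,b+c,\,-(a+b+c)$ and $a+b,\,c,\,-(a+b+c)$, intersect the resulting value lists for $|a+b+c|$, and (after the side case $|a|=|c|$) feed the answer into $a+c,\,b,\,-(a+b+c)$. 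Both approaches are valid; the paper's is shorter to write down, while yours avoids any limiting step and makes the role of \eqref{czt} more transparent.

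One minor remark on (2): your inductive step via \eqref{czt} on $na-b,\,a,\,b-(n+1)a$ is correct, but the paper obtains $a\sim b\Rightarrow ma\sim b$ in one line from the ordinary triangle inequality, $|ma-b|\le (m-1)|a|+|a-b|<m|a|+|b|$, without invoking \eqref{czt} at all; you may want to simplify there.
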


\begin{proof}
We prove (1) by contradiction. We can assume that $a \neq b$, otherwise the statement is trivial. If $a \sim b$ and $a \sim -b$ doesn't hold, the equalities $|a-b| = |a| + |b|$ and $|a+b| = |a| + |b|$ holds and hence $|a-b|=|a+b|$. Consider the zero-mean triple $a-b$, $a+b$, $-2a$. Since $|-2a|=|2a| = 2|a| \neq 0$ by Lemma~\ref{normonz}, none of the equalities $|a-b| + |2a| = |a+b|$ and $|a+b| + |2a| = |a-b|$ hold. Since $G$ has \eqref{czt} we must therefore have that $|a+b| + |a-b| = 2|a|$. But again by our initial assumption we have $|a+b| + |a-b| = 2|a| + 2|b| > 2|a|$, as $b\neq 0_G$.

\medskip

\noindent In order prove (2) we first show that $a \sim b$ implies $ma \sim b$ for $m \geq 1$. Note that by Lemma~\ref{normonz} it holds that
\begin{align*}
|ma - b| & \leq |(m-1)a| + |a-b| < (m-1)|a| + |a| + |b| = m|a| + |b| \\
 & = |ma| + |b| \ .
\end{align*}
By applying the same reasoning to the elements $ma$ and $b$ and some multiplier $n \geq 1$, we find that $a \sim b$ implies $ma \sim nb$ for $m,n \geq 1$. On the other side if $na \sim mb$, then $nma \sim nmb$ by the first step. And again by Lemma~\ref{normonz} setting $k = nm \geq 1$,
\begin{align*}
k|a - b| & = |k(a - b)| =|ka - kb| < |ka| + |kb| = k(|a| + |b|) \ .
\end{align*}
Dividing both sides by $k$ shows (2).

\medskip

\noindent Next, for proving (3) we assume that $a\sim b$ and $b\sim c$ and we desire to prove that $a\sim c$. Because $a \sim b$, it holds that $na \sim b$, i.e.\ $|na-b| < |na| + |b|$, for all $n \geq 1$ by (2). Since $G$ has \eqref{czt} and $-na$, $b$, $na-b$ is a zero-mean triple, one of the equations $|na-b| + |na| = |b|$ or $|na-b| + |b| = |na|$ must hold. For $n>|b||a|^{-1}$ the first equation can't hold because by the triangle inequality and by Lemma~\ref{normonz} we have
\[
|b| < 2n|a| - |b| = |na| - |b| + |na| \leq |na-b| + |na| \ .
\]
This similarly applies to the pair $(c,b)$ in place of $(a,b)$. So if $n$ is large enough we therefore have $|na-b| + |b| = |na|$ and $|nc-b| + |b| = |nc|$. Adding these two equalities we obtain by the triangle inequality and Lemma~\ref{normonz} that
\begin{align*}
n|c| + n|a| & = 2|b|+|na-b|+|nc-b| \geq 2|b| + |na-nc| = 2|b| + n|a-c| \ .
\end{align*}
Dividing by $n$ (where $n$ is chosen such that $n > |b|\max\{|a|^{-1},|c|^{-1}\}$) we obtain that $|a| + |c| > |a-c|$ and therefore $a \sim c$.

\medskip

\noindent (4) is a consequence of (3). Indeed, assume by contradiction that both relations $a\sim b$ and $a\sim -b$ hold. Then it follows from (3) that $b \sim -b$, i.e.\ $|2b| < |b| + |b|$. But this is not possible since $|2b| = 2|b|$ by Lemma~\ref{normonz}.

\medskip

\noindent Finally we show (5). If $a\sim b$, then $|a-b| < |a| + |b|$, and by using twice the triangle inequality we find
\[
|a+b| + |a| \geq 2|a| + |b| > |a-b| + |a| \geq |b| = |(a+b)-a| \ ,
\]
which implies that $a+b\sim a$, as desired.
\end{proof}
\noindent The above lemma shows that $\sim$ is an equivalence relation and (1) together with (5) show that $G \setminus \{0_G\}$ is the disjoint union of exactly two equivalence classes ($\{0_G\}$ being the third). Fixing some arbitrary $g_+ \in G\setminus \{0_G\}$, these two classes are given by $G_+ \mathrel{\mathop:}= \{g\in G \ | \ g\sim g_+\}$ and $G_- \mathrel{\mathop:}= -G_+$. Consider the map $\varphi : G \to \mathbb R$ defined by
\[
\varphi(g) \mathrel{\mathop:}=
\left\{
	\begin{array}{rll}
		 |g|_G & \mbox{if} & g \in G_+ \ , \\
		-|g|_G & \mbox{if} & g \in G_- \ ,\\
		0 & \mbox{if} & g = 0_G \ .
	\end{array}
\right.
\]
\begin{lemma}\label{isometrytoreals}
The map $\varphi: (G, |\cdot|_G)\to (\mathbb R,|\cdot|)$ is an isometric embedding and a group homomorphism.
\end{lemma}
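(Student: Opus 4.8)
The plan is to reduce the whole statement to the additivity of $\varphi$ on the single equivalence class $G_+$, together with two facts that are immediate from the definition of $\varphi$. First I would record these: (a) $\varphi$ preserves norms, $|\varphi(g)| = |g|_G$ for every $g \in G$; and (b) $\varphi$ is odd, i.e.\ $\varphi(-g) = -\varphi(g)$ for all $g$, because $-g \in G_-$ when $g \in G_+$, $-g \in G_+$ when $g \in G_-$, and $|{-g}|_G = |g|_G$. Note also that once $\varphi$ is known to be a group homomorphism, the isometry property is automatic: $|\varphi(a) - \varphi(b)| = |\varphi(a) + \varphi(-b)| = |\varphi(a-b)| = |a-b|_G$ by (b) and (a); injectivity — hence that $\varphi$ is an embedding — then also follows, since $\varphi(a) = \varphi(b)$ forces $|a-b|_G = 0$, i.e.\ $a = b$. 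So the only real content is that $\varphi$ is additive.

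The key step is the claim that $G_+$ is closed under addition and that $\varphi$ restricts to an additive map there: if $a, b \in G_+$, then $a + b \in G_+$ and $\varphi(a+b) = \varphi(a) + \varphi(b)$. To see this, recall that $\sim$ is an equivalence relation, so $a \sim g_+ \sim b$ gives $a \sim b$; then Lemma~\ref{equivalenceproperties}(4) forbids $a \sim -b$, which means precisely that $|a+b| = |a| + |b|$, and in particular $a + b \neq 0_G$. Lemma~\ref{equivalenceproperties}(5) gives $a + b \sim a$, hence $a + b \sim g_+$, i.e.\ $a + b \in G_+$; therefore $\varphi(a+b) = |a+b|_G = |a|_G + |b|_G = \varphi(a) + \varphi(b)$, as wanted.

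It then remains to promote this to additivity of $\varphi$ on all of $G$, which I would do by a short case analysis based on $G \setminus \{0_G\} = G_+ \sqcup G_-$ and oddness. The cases $a = 0_G$, $b = 0_G$, and $a + b = 0_G$ are trivial, so assume $a$, $b$, and $a+b$ are all nonzero. If $a, b \in G_+$ this is the claim; if $a, b \in G_-$, apply the claim to $-a, -b \in G_+$ and use oddness. In the remaining mixed case, say $a \in G_+$ and $c \defl -b \in G_+$, the identity to prove becomes $\varphi(a) = \varphi(a-c) + \varphi(c)$ after substituting $\varphi(b) = -\varphi(c)$. Writing $a = (a-c) + c$: if $a-c \in G_+$, this is the claim applied to the pair $a-c, c$; if $a-c \in G_-$, set $e \defl c-a \in G_+$, apply the claim to $a, e$ to get $\varphi(c) = \varphi(a) + \varphi(e)$, and conclude using $\varphi(e) = -\varphi(a-c)$ from oddness; the case $a=c$ is trivial. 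I do not expect a genuine obstacle here — the one point that needs care is the sign bookkeeping in the mixed case, namely correctly reading off whether $a-c$ lies in $G_+$ or in $G_-$ and then invoking the $G_+$-additivity for the appropriate pair of elements.
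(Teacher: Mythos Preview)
Your proof is correct and follows essentially the same route as the paper's: establish oddness of $\varphi$, prove additivity, and deduce the isometry from these. The only organizational difference is that you first isolate additivity on $G_+$ (using Lemma~\ref{equivalenceproperties}(4) to get $|a+b|=|a|+|b|$ directly from $a\not\sim -b$, and (5) for closure) and then reduce every remaining case to that, whereas the paper treats both the same-sign and mixed-sign cases in parallel by applying \eqref{czt} to the zero-mean triple $a+b,-a,-b$; your packaging is arguably a bit cleaner but the underlying argument is the same.
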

\begin{proof}
We will use the notation $|\cdot|$ for the norm $|\cdot|_G$ in the proof, as the only time when the norm on $\mathbb R$ intervenes is in the last sentence of the proof. In order to show that $\varphi$ is a homomorphism we need to show that $\varphi(a+b) = \varphi(a) + \varphi(b)$ and $-\varphi(a) = \varphi(-a)$ for all $a,b \in G$. The second equality is obvious because of Lemma~\ref{equivalenceproperties}(4) the relation $a \sim -a$ never holds for $a \neq 0_G$. To prove the first equality we consider only the nontrivial case $a,b\in G\setminus\{0_G\}$. In view of Lemma~\ref{equivalenceproperties}(5), if $a\sim b$, then $a+b\sim a\sim b$, and if $a\sim -b$ as well as $a+b\sim a$, then $a+b\sim a\sim -b$. So up to interchanging $a$ and $b$, either $a+b\sim a\sim b$ or $a+b\sim a\sim -b$. We claim:
\begin{enumerate}
	\item $|a+b| = |a| + |b|$ in case $a+b\sim a\sim b$,
	\item $|a+b| = |a|-|b|$ in case $a+b\sim a\sim-b$.
\end{enumerate}
\noindent \textbf{Proof of (1)}: Translating $a+b\sim a$ and $a+b\sim b$ we have $|b| = |(a+b) - a| < |a+b| + |a|$ and similarly $|a| < |a+b| + |b|$. Since $G$ has \eqref{czt} and considering the zero-mean triple $a+b$, $-a$, $-b$, one of the following equalities has to hold: $|b| = |a+b| + |a|$, $|a| = |a+b| + |b|$, $|a+b|=|a|+|b|$. Since the first two are excluded we have $|a+b| = |a|+|b|$.

\medskip

\noindent \textbf{Proof of (2)}: As above we obtain $|b| < |a| + |a+b|$ from $a+b \sim a$ and $|a+b| = |a-(-b)|<|a|+|b|$ from $a \sim - b$. Since $G$ has \eqref{czt} and again considering the triple $a+b$, $-a$, $-b$, we get that $|a| = |b| + |a+b|$.

\medskip

\noindent This shows that $\varphi$ is a homomorphism. It also follows that $\varphi$ is an isometric embedding in the sense of metric spaces because $|\varphi(a)| = |a|_G$ by the definition of $\varphi$, and $|\varphi(a)-\varphi(b)|=|\varphi(a-b)|=|a-b|_G$ because $\varphi$ is a homomorphism.
\end{proof}
\noindent Recall that a normed Abelian group $(G,|\cdot|)$ is discrete if
\[
\inf\{|g|\ |\ g \in G \setminus \{0\} \} > 0 \ .
\]
Note that if $G$ is a discrete complete subgroup of $\mathbb R$ then $G$ is isomorphic to $\mathbb Z$. Indeed if $G \subset \mathbb R$ is complete, it is in particular a closed set and there exists an element $g\neq 0$ of smallest norm. If $G\neq \langle g\rangle$, then there would exist some $n\in\mathbb Z$ and $g'\in G \setminus \langle g \rangle$ with $|ng-g'| < |g|$ contradict the minimality of $|g|$.
\begin{proof}[Proof of Proposition~\ref{classtorsionfree}]
Because of Lemma~\ref{isometrytoreals} there is an isometric (in particular injective) homomorphism $\varphi : G \to \mathbb R$. If $G$ is not discrete, then $0 \in G$ is an accumulation point and hence the image $\varphi(G)$ is dense in $\mathbb R$. Since $G$ is complete, so is $\varphi(G)$ and $\varphi$ is therefore surjective. This shows that $\varphi$ is an isometric isomorphism. If $G$ is discrete, then the image of $\varphi$ must also be discrete and thus $G$ is isomorphic to $\mathbb Z$.
\end{proof}
\noindent The following example shows that the completeness assumption in Theorem~\ref{classtorsionfree} is necessary.
\begin{example}
Let $r,s \in \mathbb R\setminus\{0\}$ be such that $\frac{r}{s}$ is irrational and define $f : \mathbb Z \times \mathbb Z \to \mathbb R$ by $f(m,n) = mr + ns$. Now $f$ is an injective homomorphism and the image of $f$ is countable and dense in $\mathbb R$. The second statement follows by Hurwitz's theorem, which states that there are infinitely many pairs $(m,n) \in \mathbb Z \times \mathbb Z$ with
\[
\left|\frac{m}{n} - \frac{s}{r}\right| < \frac{1}{n^2} \ .
\]
Now the pullback norm $|(m,n)| \mathrel{\mathop:}= |f(m,n)|$ on $\mathbb Z \times \mathbb Z$ has \eqref{czt} but is not complete.
\end{example}

\subsection{Conclusion of the classification}
Our classification of complete groups with \eqref{czt} is concluded by the following:
\begin{proof}[Proof of Proposition~\ref{class_czt}]
Assume by contradiction that $G$ contains a torsion element $g_T \in G \setminus \{0\}$ and a non-torsion element $g \in G \setminus \{0\}$. By using the classification from Proposition~\ref{classtorsion}, and up to taking another element if necessary in case $\langle g_T\rangle \simeq \mathbb Z_4$, we can assume that $2g_T = 0$. Together with Proposition~\ref{classtorsionfree} we obtain that $\langle g_T,g \rangle$ is isomorphic to $\mathbb Z_2 \times \mathbb Z$ and as a subgroup of $G$ we obtain a norm $|\cdot|$ on $\mathbb Z_2 \times \mathbb Z$ that has collinear zero-mean triples \eqref{czt}.

\medskip

\noindent Like in the proof of Proposition~\ref{classtorsion}, for a zero-mean triple $a,a, -2a$ with $a, 2a\neq 0$ the only possible collinearity equation is $|a|+|a|=|2a|$. By considering the zero-mean triples $(0,2^k)$, $(0,2^k)$, $(0,-2^{k+1})$ and $(1,2^k)$, $(1,2^k)$, $(0,-2^{k+1})$ we find by induction on $k$ that $2^{k}|(0,1)|=|(0,2^{k})|=2^k|(1,1)|$ for all $k\ge 1$. So $(1,2^k)$, $(0,-2^k)$, $(1,0)$ form a zero-mean triple with norms of the form $2^k\alpha$, $2^k\alpha$, $\beta$, where $\alpha\mathrel{\mathop:}=|(1,1)|=|(0,1)|, \beta\mathrel{\mathop:}=|(1,0)|$, which can't be collinear for $2^{k+1} > \frac{\beta}{\alpha}$. Thus $\mathbb Z_2\times \mathbb Z$ has no norm for which \eqref{czt} holds. 

\medskip

\noindent This implies that $G$ is either a torsion group or torsion-free. Both cases have already been classified in Proposition~\ref{classtorsion} and Proposition~\ref{classtorsionfree}.
\end{proof}
\noindent With this done we are ready to prove Theorem~\ref{class_cotp}, the main theorem of this section.
\begin{proof}[Proof of Theorem~\ref{class_cotp}]
Due to Lemma~\ref{collinearimplication} and Proposition~\ref{class_czt}, the only possible groups that have acyclic nonbranching optimal transport plans are $\mathbb R$, $\mathbb Z$, $\mathbb Z_2$, $\mathbb Z_2\times \mathbb Z_2$, $\mathbb Z_4$. It is shown in Lemma~\ref{check_groups} that except for $\mathbb Z_4$ all these groups have acyclic nonbranching optimal transport plans. So it remains to exclude $\mathbb Z_4$. But $\mathbb Z_4$ doesn't have nonbranching optimal transport plans by the classification for finitely generated groups in Theorem~\ref{class_otp}.
\end{proof}

\section{Nonbranching transport and calibrations, and proof of Proposition~\ref{calibprop}}
\noindent By combining the main result of \cite{pz}, which gives calibrations for $1$-chains with coefficients in $G=\mathbb Z_2$, with the classically known calibration/duality available for $G=\mathbb R$ and $G=\mathbb Z$, we give now a general version of calibrations/Kantorovich duality for groups as in the theorem above.

\medskip
\noindent The basic example from \cite[Remark~2.6]{pz} indicates that calibrations, i.e.\ the possibility to re-express the filling problem as a global dual problem defined in terms of maximization among some class of Lipschitz functions, would be prohibited in the cases where the minimum fillings are branched. Note that for the classical branched transport problem, i.e.\ for the case of the group $(\mathbb R,|\cdot|^\alpha)$, $\alpha\in\ ]0,1[$, the so-called landscape functions, which may be seen as a partial analogue of a calibrations, were introduced in \cite{santambrogio} and \cite{xia2}. However, two properties which would be desirable in order to have a global dual problem to the filling problem are missing in that case: First, the fact that only H\"older (and not Lipschitz) regularity holds for the landscape functions indicates that they do not correspond to a true dual variational problem. Second, the fact that a given landscape function is defined in terms of the branched transport minimizers and not in terms of the sources and weights only, indicates that a given landscape function is only a locally dual object, i.e.\ it will not simultaneously calibrate multiple branched transport minimizers.

\medskip 

\noindent Note that for any normed Abelian group $G$, any geodesic tree $T$ and any $S \in \mathcal R_1(T;G)$, it holds that $\partial S = 0$ implies $S = 0$. This follows directly from the homotopy formula for chains, \cite[\S 2.6]{depauw}, and from the fact that $\mathcal H^2(f(B)) = 0$ for all Lipschitz maps $f : B \to T$ defined on a Borel set $B \subset \mathbb R^2$, which follows for example from \cite[Lemma~3.6]{W}. As a consequence, for any $R \in \mathcal R_0(T;G)$, the filling length $\rm{Fill}_{G,T}(R)$ is achieved by \textit{any} rectifiable filling of $R$, and in this sense the minimum filling problem in trees is trivialized. We next give a proof of Proposition~\ref{calibprop} stated in the introduction. It essentially tells that a filling problem with coefficients in $\mathbb R^k\times \mathbb Z^l\times \mathbb Z_2^k$ can be calibrated by a multivalued map into a tree.

\begin{proof}[Proof of Proposition~\ref{calibprop}]
First note that since we have endowed $G$ with the $\ell_1$-norm, we obtain for any choice of $f_i$ and $T$ as in \eqref{filling_eq} and $S \in \mathcal R_1(X;G)$ with $\partial S = T$ that
\begin{align*}
\rm{Fill}_{G,T}\biggl(\sum_{j=1}^{k+l+m} f_{j\#} (\pi_j R) \biggr) & \leq \mathbf M\biggl(\sum_{j=1}^{k+l+m} f_{j\#} (\pi_j S) \biggr) \leq \sum_{j=1}^{k+l+m} \mathbf M(f_{j\#} (\pi_j S)) \\
 & \leq \sum_{j=1}^{k+l+m} \mathbf M(\pi_j S) = \mathbf M(S) \ .
\end{align*}
Taking the infimum over all such $S$, this shows one inequality in \eqref{filling_eq}.

\medskip

\noindent To obtain the opposite inequality note that for each $\pi_i R$ we can find a finite geodesic tree $T_i$ and a $1$-Lipschitz map $f_i : X \to T_i$ such that $\rm{Fill}_{G,X}(\pi_i R)=\rm{Fill}_{G,T_i}(f_{i\#}(\pi_i R))$. For $G_i=\mathbb R$ or $G_i=\mathbb Z$ we can actually take $T_i = \mathbb R$, or a closed interval, by Kantorovich-duality (of which a version adapted to the present setting is stated e.g.\ in \cite[Theorem~1.3]{pz}). For $G_i=\mathbb Z_2$ this follows from the main result \cite[Theorem~1.4]{pz}, respectively, its formulation for chains in \cite[Proposition~1.6]{pz}. Now one may obtain a finite geodesic tree $T$ by gluing together all the $T_i$. We can actually manage to glue them to a star-shaped tree such that two different $T_i$ and $T_j$ have enough distance inside $T$ that
\[
\rm{Fill}_{G,T}\biggl(\sum_{i=1}^{k+l+m} f_{i\#} (\pi_i R) \biggr) = \sum_{i=1}^{k+l+m} \rm{Fill}_{G,T}\biggl(f_{i\#} (\pi_i R) \biggr) \ .
\]
Hence again using the definition of the $\ell_1$-norm on $G$,
\begin{align*}
\rm{Fill}_{G,T}\biggl(\sum_{i=1}^{k+l+m} f_{i\#} (\pi_i R) \biggr) & = \sum_{i=1}^{k+l+m} \rm{Fill}_{G,X}(\pi_i R) = \rm{Fill}_{G,X}\biggl(\sum_{i=1}^{k+l+m} \pi_i R\biggr) \\
 & = \rm{Fill}_{G,X}(R) \ .
\end{align*}
This concludes the proof of the proposition.
\end{proof}
\noindent There is a partial converse to this statement that generalizes \cite[Remark~2.6]{pz} and essentially tells that only groups with nonbranching optimal transport plans can be calibrated with maps into trees. In the following Lemma, a tree is a tree $T$ together with a Lipschitz path connected metric $d$ (so we don't assume $T$ to be geodesic). As stated before Proposition~\ref{calibprop} we get that $\partial S = 0$ implies $S = 0$ in case $S \in \mathcal R_1(T;G)$.
\begin{lemma}
\label{converse_lem}
Let $G$ be a normed Abelian group with optimal transport plans. Assume that for any $R = \sum_{i=1}^n g_i \curr{x_i} \in \mathcal R_0(X;G)$, where $X$ is a geodesic metric space and $\sum_{i=1}^n g_i = 0$, there exists a tree $(T,d)$ and a $1$-Lipschitz map $f : X \to T$ such that $\rm{Fill}_{G,T}(f_\# R) = \rm{Fill}_{G,X}(R)$. Then $G$ has nonbranching optimal transport plans.
\end{lemma}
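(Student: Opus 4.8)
The plan is to establish the contrapositive-flavored statement directly: assuming the calibration hypothesis, produce the decomposition $g_{ij}$ required by \eqref{nbp}. The natural test configuration is the ``vertex'' configuration used repeatedly in Section~\ref{class_cotpczt}: given $g_1,\dots,g_n \in G$ with $\sum_i g_i = 0$, place the $x_i$ at the tips of a geodesic star with a central vertex $v$, i.e.\ take $X$ to be the metric graph with vertex set $\{v,x_1,\dots,x_n\}$ and an edge from $v$ to each $x_i$ of length $1$ (one should also add edges directly between the $x_i$ of length $2$, so that $X$ is honestly geodesic and so that the ``direct'' transport plans compete). Then $R \defl \sum_{i=1}^n g_i\curr{x_i}$ has $\partial^{-1}$ nonempty, and $\op{Fill}_{G,X}(R) \le \sum_{i=1}^n |g_i|$ via the obvious filling $\sum_i g_i\curr{v,x_i}$.

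Next I would invoke the hypothesis: there is a tree $(T,d)$ and a $1$-Lipschitz $f:X\to T$ with $\op{Fill}_{G,T}(f_\# R) = \op{Fill}_{G,X}(R)$. Because fillings in trees are unique (as recalled just before the lemma: $\partial S = 0 \Rightarrow S = 0$ for $S\in\mathcal R_1(T;G)$), $f_\# R$ has a single filling $C$, and $\mathbf M(C) = \op{Fill}_{G,T}(f_\# R)$. The key structural point is that $f$ sends the star $X$ into the tree $T$, and I want to read off from the unique filling $C$ in $T$ a transport plan on $X$ that is \emph{simultaneously} mass-optimal and satisfies the saturation condition $|g_i| = \sum_j |g_{ij}|$. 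Concretely: pulling back $C$ along $f$ (or rather, reconstructing from $C$ a chain on the original star) should give coefficients $g_{ij}$ associated to the geodesics $[x_i,x_j]$ through $v$, and the tree-geometry forces the triangle inequalities along these geodesics to be equalities, because in a tree the geodesic from $f(x_i)$ to $f(x_j)$ is unique and the filling measure of $f_\# R$ restricted to each arc is rigid (constancy theorem in $T$). From this rigidity one extracts that each $|g_i|$ equals the total mass that $C$ puts on arcs incident to $f(x_i)$, which is exactly $\sum_j |g_{ij}|$.

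The main obstacle, and where the argument needs real care, is the combinatorial/rigidity step of passing from ``$C$ is the unique filling in the tree $T$, of minimal mass equal to the star-filling mass'' to ``the induced $g_{ij}$ on $X$ satisfy all four lines of \eqref{nbp}, in particular the saturation equalities.'' One must argue that the optimal filling in $X$ cannot do better than routing through $v$ in a way that corresponds, under $f$, to the tree filling $C$ — equivalently that the map $f$ cannot ``fold'' two of the segments $[v,x_i]$, $[v,x_j]$ onto overlapping arcs in $T$ in a way that would let cancellation lower the mass below $\sum|g_{ij}|$ along geodesics. Here one uses that $f$ is $1$-Lipschitz so $d_T(f(x_i),f(v)) \le 1$, that $C$ is forced by $\partial C = f_\# R$, and that $\mathbf M(C) \ge \mathbf M(C')$ for the comparison filling coming from any $X$-transport plan pushed forward — combined with the hypothesized equality $\op{Fill}_{G,T}(f_\#R)=\op{Fill}_{G,X}(R)$, this pins the $X$-minimizer's cost to a sum of segment masses with no slack, which is precisely the statement that $|g_i| = \sum_j|g_{ij}|$. (One also re-runs this with $v$ removed, or with only a sub-collection of the $g_i$, to handle the reductions to smaller $n$ exactly as in the proof of Lemma~\ref{indecomplem}.) Finally, since $G$ was assumed to have optimal transport plans, the infimum is attained, so the extracted $g_{ij}$ genuinely witness \eqref{nbp} and $G$ has nonbranching optimal transport plans.
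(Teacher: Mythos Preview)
Your strategy is essentially the paper's: take the complete graph on $\{x_1,\dots,x_n\}$ with edge-length $2$ as the test space $X$, invoke the calibration hypothesis to get $f:X\to T$, and use rigidity in the tree to force the saturation equalities $|g_i|=\sum_j|g_{ij}|$. But you run the argument in the wrong direction, and this is where your ``main obstacle'' becomes a real gap. You propose to read the $g_{ij}$ off the unique tree-filling $C$ and then ``pull back'' to $X$; it is unclear how to do this, since $f$ need not be injective and there is no canonical lift of a chain in $T$ to one in $X$. The paper instead \emph{starts} with an optimal transport plan $S=\sum_{i<j}g_{ij}\curr{x_j,x_i}$ in $X$ (this exists by the standing assumption that $G$ has optimal transport plans), pushes it forward, and then the chain of inequalities
\[
\operatorname{Fill}_{G,X}(R)=\mathbf M(S)\ \ge\ \sum_{i<j}|g_{ij}|\,\operatorname{length}([f(x_j),f(x_i)])\ \ge\ \mathbf M(f_\#S)=\operatorname{Fill}_{G,T}(f_\#R)
\]
collapses to equalities by hypothesis. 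Equality in the first inequality forces $f$ to map each relevant segment $[x_i,x_j]$ isometrically into $T$; since $T$ is uniquely arc-connected and the images have length $2$, a common midpoint $x\in T$ appears with $\operatorname{length}([x,f(x_i)])=1$ for all $i$. Then $\mathbf M(f_\#S)=\sum_i|g_i|$ because the unique filling of $f_\#R$ in $T$ is the cone from $x$, and combining gives $\sum_{i,j}|g_{ij}|=\sum_i|g_i|$, which together with the pointwise triangle inequalities yields \eqref{nbp}.

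Two smaller points. Your extra vertex $v$ in $X$ is harmless but does no work: the $1$-Lipschitz condition only gives $d_T(f(v),f(x_i))\le 1$, which by itself does not identify $f(v)$ as the center you need; the center $x$ has to emerge in $T$ from the length-preservation forced by the equality of fillings, not from a vertex placed in $X$. And the parenthetical about ``re-running with $v$ removed, or with only a sub-collection of the $g_i$'' is unnecessary --- once you have $\sum_{i,j}|g_{ij}|=\sum_i|g_i|$, the individual equalities $|g_i|=\sum_j|g_{ij}|$ follow immediately from $|g_i|\le\sum_j|g_{ij}|$.
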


\begin{proof}
Consider $g_1,\dots,g_n \in G \setminus \{0\}$ as in the statement and let $(X,d_X)$ be the geodesic metric space obtained by gluing intervals of length $2$ between any two different points of the set $X \mathrel{\mathop:}= \{x_1,\dots,x_n\}$. Since $G$ has optimal transport plans, there exists $S \in \mathcal R_1(X;G)$ with $\partial S = R$ and $\mathbf M(S) = \rm{Fill}_{G_X}(R)$. With the discussion following Definition~\ref{fillingsongeodesics}, $S = \sum_{i < j} g_{ij} \curr{x_j,x_i}$, where $g_{ij} = -g_{ji}$, $g_{ii}=0$ and $g_i = \sum_j g_{ij}$. Then
\begin{align*}
\rm{Fill}_{G,X}(R) & = \mathbf M(S) = \sum_{1 \leq i < j \leq n} |g_{ij}|\ \rm{length}([x_j,x_i]) \\
 & \geq \sum_{1 \leq i < j \leq n} |g_{ij}|\ \rm{length}([f(x_j),f(x_i)]) \\
 & \geq \mathbf M(f_\# S) \\
 & = \rm{Fill}_{G,T}(f_\# R) \ .
\end{align*}
By assumption, equalities hold and hence $f$ maps each segment $[x_j,x_i]$ injectively and length preserving into $T$. Because $T$ is uniquely arcwise connected, there is a unique point $x \in T$ such that $x \in [f(x_j),f(x_i)]$ for different $i,j$ and $\rm{length}([f(x_j),x]) = 1$. Thus
\begin{align*}
\sum_{i =1}^n \sum_{j =1}^n |g_{ij}| & = \mathbf M(S) = \mathbf M(f_\# S) = \sum_{i=1}^n |g_{i}|\ \rm{length}([x,f(x_i)]) = \sum_{i=1}^n |g_{i}| \ .
\end{align*}
Since $|g_{i}| \leq \sum_{j =1}^n |g_{ij}|$ for all $i$ by the triangle inequality, this implies that $|g_{i}| = \sum_{j =1}^n |g_{ij}|$ for all $i$. Hence $G$ has nonbranching optimal transport plans.
\end{proof}

\end{document}